\numberwithin{equation}{section}
\newcommand{\mc}{\mathcal}
\newcommand{\mbb}{\mathbb}
\DeclarePairedDelimiter\abs{\lvert}{\rvert}
\newtheorem{lem}{Lemma}[section]
\newtheorem{thm}[lem]{Theorem}
\newtheorem{cor}[lem]{Corollary}
\newtheorem{prop}[lem]{Proposition}
\theoremstyle{definition}
\newtheorem{ex}[lem]{Example}
\theoremstyle{remark}
\newtheorem{rem}[lem]{Remark}
\newcolumntype{M}[1]{>{\centering\arraybackslash}m{#1}}
\newcolumntype{N}{@{}m{0pt}@{}}
\title{$\varepsilon$-Almost collision-flat universal hash functions and mosaics of designs\footnote{This work was presented in part at the 2023 IEEE International Symposium on Information Theory and at the 29th Nordic Congress of Mathematicians. A short and preliminary version of this paper is \cite{ISIT23}. It mainly contains: two of the lower bounds on the size of an ACFU hash function (Theorem 3.4) without proof; a less general version of Theorem 4.1, which shows how to construct an ACFU hash function from an almost universal hash function; all examples without the discussion of their design-theoretic properties; a discussion of the application of ACFU hash functions to privacy amplification which is more detailed than here (Section 5) and gives a better bound on the attained security level.}}
\author{Moritz Wiese\thanks{Technical University of Munich, Chair of Theoretical Information Technology and BMBF Research Hub 6G-Life, Arcisstra\ss e 21, 80333 M\"unchen, Germany. Corresponding author. E-mail: wiese@tum.de}
\and Holger Boche\thanks{Technical University of Munich, Chair of Theoretical Information Technology and BMBF Research Hub 6G-Life, Arcisstraße 21, 80333 M\"unchen, Germany. E-mail: boche@tum.de}}
\begin{document}
    
\maketitle 

\abstract{
    We introduce, motivate and study $\varepsilon$-almost collision-flat universal (ACFU) hash functions $f:\mc X\times\mc S\to\mc A$. Their main property is that the number of collisions in any given value is bounded. Each $\varepsilon$-ACFU hash function is an $\varepsilon$-almost universal (AU) hash function, and every $\varepsilon$-almost strongly universal (ASU) hash function is an $\varepsilon$-ACFU hash function. We study how the size of the seed set $\mc S$ depends on $\varepsilon,\abs{\mc X}$ and $\abs{\mc A}$. Depending on how these parameters are interrelated, seed-minimizing ACFU hash functions are equivalent to mosaics of balanced incomplete block designs (BIBDs) or to duals of mosaics of quasi-symmetric block designs; in a third case, mosaics of transversal designs and nets yield seed-optimal ACFU hash functions, but a full characterization is missing. By either extending $\mc S$ or $\mc X$, it is possible to obtain an $\varepsilon$-ACFU hash function from an $\varepsilon$-AU hash function or an $\varepsilon$-ASU hash function, generalizing the construction of mosaics of designs from a given resolvable design (Gnilke, Greferath, Pav\v cevi\'c, Des.\ Codes Cryptogr.~86(1)). The concatenation of an ASU and an ACFU hash function again yields an ACFU hash function. Finally, we motivate ACFU hash functions by their applicability in privacy amplification.
}
\vspace{1em}

\noindent\textbf{Keywords:} Universal hash function, Mosaic of designs, Balanced incomplete block design, Privacy amplification

\section{Introduction}

Let $\mc X,\mc S,\mc A$ be finite sets. For $\varepsilon\geq0$, we call a function $f:\mc X\times\mc S\to\mc A$ an \textit{$\varepsilon$-almost collision-flat universal ($\varepsilon$-ACFU) hash function} if 
\begin{enumerate}[label=(ACFU\arabic*), wide=0pt, labelindent=\parindent]
    \item for every $x\in\mc X$ and every $\alpha\in\mc A$,
    \[
        \abs{\{s:f(x,s)=\alpha\}}=\frac{\abs{\mc S}}{\abs{\mc A}},
    \]
    \item for all distinct $x,x'\in\mc X$ and every $\alpha\in\mc A$,
    \[
        \abs{\{s:f(x,s)=f(x',s)=\alpha\}}\leq\frac{\varepsilon\abs{\mc S}}{\abs{\mc A}}.
    \]
\end{enumerate}
We call $f$ \textit{nontrivial} if $2\leq\abs{\mc A}<\abs{\mc X}$. The set $\mc S$ is called the \textit{seed set} of $f$; occasionally, we will call $\mc X$ the \textit{point set} of $f$. In this paper, we are going to motivate $\varepsilon$-ACFU hash functions and study their properties. 

There are two well-known related types of hash functions which we will mention and use frequently. An \textit{$\varepsilon$-almost universal ($\varepsilon$-AU) hash function} $f:\mc X\times\mc S\to\mc A$ satisfies
\begin{enumerate}[label=(AU), wide=0pt, labelindent=\parindent, topsep=0.6em]
    \item for all distinct $x,x'\in\mc X$,
    \[
        \abs{\{s:f(x,s)=f(x',s)\}}\leq\varepsilon\abs{\mc S}.
    \]
\end{enumerate}
This definition goes back to Stinson \cite{Stinson_UH_AC} as a generalization of \textit{universal hash functions} due to Carter and Wegman \cite{CW_UH}, where $\varepsilon=1/\abs{\mc A}$. Finally, an \textit{$\varepsilon$-almost strongly universal ($\varepsilon$-ASU) hash function} $f:\mc X\times\mc S\to\mc A$ satisfies the properties
\begin{enumerate}[label=(ASU\arabic*), wide=0pt, labelindent=\parindent]
    \item for every $x\in\mc X$ and every $\alpha\in\mc A$,
    \[
        \abs{\{s:f(x,s)=\alpha\}}=\frac{\abs{\mc S}}{\abs{\mc A}}
    \]
    (so this is the same as (ACFU1)),
    \item for all distinct $x,x'\in\mc X$ and all $\alpha,\alpha'\in\mc A$,
    \[
        \abs{\{s:f(x,s)=\alpha,f(x',s)=\alpha'\}}\leq\frac{\varepsilon\abs{\mc S}}{\abs{\mc A}}.
    \]
\end{enumerate}
\textit{Strongly universal (SU) hash functions}, where the above properties hold with $\varepsilon=1/\abs{\mc A}$, were defined by Wegman and Carter \cite{WC_UH}, and the above definition of $\varepsilon$-ASU hash functions is again due to Stinson \cite{Stinson_UH_AC}. We also refer to the $\mc S$ sets of AU and ASU hash functions as their seed sets.

\begin{rem}\label{rem:epsilon_convention}
    Sometimes $\varepsilon$-AU hash functions are defined in terms of the parameter $\varepsilon\abs{\mc A}$, e.g., \cite{TsuruHay_DualUniv, Hay_almost_dual_UHF}. Our convention is the same as in \cite{Stinson_UH_AC}. 
\end{rem}

For a hash function $f$, the event that $f(x,s)=f(x',s)$ for distinct $x,x'$ is usually referred to as a \textit{collision}. If $f$ is an AU hash function, it is not important in which value of $f$ a collision occurs, only the total number of collisions has to be bounded. In contrast, for an ACFU hash function, we are also interested in the value of $f$ when a collision occurs. In the range of numbers $\abs{\{s:f(x,s)=f(x',s)=\alpha\}}$, where $x\neq x'$ are fixed and where $\alpha$ ranges over all of $\mc A$, there should be no peaks (valleys are allowed). Hence the name ``collision-flat''.

The following relations between the three types of hash functions are obvious.

\begin{lem}\label{lem:hash_fct_rels}
    Let $\varepsilon\geq0$. 
    \begin{enumerate}
        \item Every $\varepsilon$-ACFU hash function is an $\varepsilon$-AU hash function.
        \item Every $\varepsilon$-ASU hash function is an $\varepsilon$-ACFU hash function.
    \end{enumerate}
\end{lem}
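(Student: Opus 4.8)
The plan is to check both implications straight from the defining properties; the only idea involved is to decompose a collision according to the hash value in which it occurs.

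For part~1, let $f$ be $\varepsilon$-ACFU and fix distinct $x,x'\in\mc X$. The set $\{s:f(x,s)=f(x',s)\}$ is the disjoint union, over $\alpha\in\mc A$, of the sets $\{s:f(x,s)=f(x',s)=\alpha\}$. Applying (ACFU2) to each summand and adding up over the $\abs{\mc A}$ values of $\alpha$ yields
\[
    \abs{\{s:f(x,s)=f(x',s)\}}=\sum_{\alpha\in\mc A}\abs{\{s:f(x,s)=f(x',s)=\alpha\}}\leq\abs{\mc A}\cdot\frac{\varepsilon\abs{\mc S}}{\abs{\mc A}}=\varepsilon\abs{\mc S},
\]
which is exactly (AU).

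For part~2, let $f$ be $\varepsilon$-ASU. Property (ACFU1) is word for word (ASU1), so nothing has to be shown there. For (ACFU2), fix distinct $x,x'\in\mc X$ and $\alpha\in\mc A$, and observe that $\{s:f(x,s)=f(x',s)=\alpha\}=\{s:f(x,s)=\alpha,\ f(x',s)=\alpha\}$; applying (ASU2) with $\alpha'=\alpha$ bounds the cardinality of this set by $\varepsilon\abs{\mc S}/\abs{\mc A}$, as required.

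Both parts are immediate from the definitions, so I do not expect a genuine obstacle; the one step worth isolating is the ``partition by value'' argument in part~1, which converts the per-value bound (ACFU2) into the aggregate bound (AU), and which explains why the loss in going from ACFU to AU is by a factor of at most $\abs{\mc A}$.
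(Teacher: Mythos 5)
Your proof is correct and is exactly the direct verification the paper has in mind when it calls these relations ``obvious'': part~1 partitions the collision set by hash value and sums the per-value bound from (ACFU2), and part~2 notes that (ACFU1) coincides with (ASU1) while (ASU2) with $\alpha'=\alpha$ gives (ACFU2). No issues.
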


Our motivation for studying ACFU hash functions comes from their applicability in \textit{privacy amplification}. This is a key step in \textit{secret key generation}, where two parties A and B want to generate a shared secret key from correlated random observations and public discussion, while an adversary can observe their public communication and possibly also makes observations correlated to the key-generating parties' ones \cite[Chapter 4]{BlochBarros}. The privacy amplification step takes place once the two parties have agreed on a shared random variable $X$. The result of privacy amplification is a key shared by A and B which is secret with respect to the adversary. We formalize this setting in Section \ref{sect:motivation}. 

The standard choice of function to use in privacy amplification so far has been an arbitrary AU hash function \cite{BBR_PA, BBCM_gen_PA}; seeded extractors are also suitable \cite{MauW_extractors}. These functions need additional randomness, a \textit{seed}, as a second input (in the definitions above, from the set $\mc S$), and they guarantee security against all adversaries for which the entropy of $X$ conditional on their own observation is sufficiently large. It is desirable to make the seed small, since its generation is costly and it must be known by both A and B.

For general AU hash functions and extractors, key security so far has only been proved under the assumption that the adversary has no other information about the key than the knowledge of the underlying joint probability distribution together with its observations. A stronger security test goes as follows: The adversary has the above information, and additionally, it has to distinguish between two arbitrary possible key values. Security (\textit{key indistinguishability}) is declared if the adversary's decision performance is no more than negligibly better than random guessing for all such value pairs. In all cases we know of where this stronger security criterion has been applied and where security is guaranteed against all $X$ with sufficiently high conditional entropy, the AU hash functions which were used for privacy amplification or in a related problem, the wiretap channel problem, actually are ACFU hash functions \cite{HayMat, mosaics}. (These functions will be considered below as examples.) For this reason, ACFU hash functions are worth a closer look. In Section \ref{sect:motivation}, we sketch how key indistinguishability can be proven in a privacy amplification scenario using ACFU hash functions.

As we have said already, the second argument $s$ to an ACFU hash function will be chosen uniformly at random. Since randomness is expensive, our first question about ACFU hash functions (Section \ref{sect:seed_lbs}) is how small the set $\mc S$ can be, given $\varepsilon$ and the cardinalities of $\mc X$ and $\mc A$. We derive three lower bounds, each of which is relevant in different regimes depending on the relation of $\varepsilon,\abs{\mc X},\abs{\mc A}$. We also discuss which structure an ACFU hash function attaining equality in any of the bounds has. 

It turns out that if $\varepsilon$ is optimal (i.e., minimal given $\abs{\mc X}$ and $\abs{\mc A}$), then an $\varepsilon$-ACFU hash function attains equality in the corresponding lower bound if and only if its underlying structure is that of a mosaic of balanced incomplete block designs (BIBDs) \cite{GGP_mosaics}. The two other lower bounds on the seed size hold for general $\varepsilon$. One of them holds for small $\varepsilon$, but may be void if $\abs{\mc A}$ is large relative to $\abs{\mc X}$ (roughly $\abs{\mc A}^2>\abs{\mc X}$). If this bound applies, the ACFU hash functions satisfying equality correspond precisely to the duals of mosaics of quasi-symmetric BIBDs \cite{mosaics}. In the remaining bound, we do not have a full characterization of the ACFU hash functions attaining equality, but we show that equality is attained by mosaics of transversal designs or of nets. Due to these facts, we believe that ACFU hash functions also are interesting objects of study in themselves. In \cite{mosaics}, the authors studied mosaics of balanced incomplete block designs and of group divisible designs and applied them to privacy amplification as well as to another problem from information-theoretic security, the wiretap channel. The results of the present paper embed such mosaics in the wider picture of ACFU hash functions. The necessary design-theoretic definitions are collected in Section \ref{sect:prelims}.

In addition to studying how small $\mc S$ can be for given $\varepsilon$ and $\abs{\mc X},\abs{\mc A}$, we also give three methods for constructing ACFU hash functions in Section \ref{sect:constructions}. By the first one, based on an extension of the seed set with the help of a quasigroup (latin square), one obtains an $\varepsilon$-ACFU hash function from another function if and only if the latter is an $\varepsilon$-AU hash function. This can be seen as a generalization of the method proposed in \cite{GGP_mosaics} for constructing a mosaic of BIBDs from a resolvable BIBD, and also as a generalization of a method used frequently to construct ASU hash functions from AU hash functions. The second method is the dual of the first and produces an $\varepsilon$-ACFU hash function if and only if the original function is an $\varepsilon$-ASU hash function. In the case where the quasigroup in fact is an abelian group, we characterize those ACFU hash functions which can be derived by both methods as ``double extensions'' of $\varepsilon$\textit{-balanced functions}. By the third construction method, one obtains an $\varepsilon$-ACFU hash function by concatenating an $\varepsilon_1$-ACFU hash function and an $\varepsilon_2$-ASU hash function, similar to methods used by Stinson \cite{Stinson_UH_AC} for constructing AU and ASU hash functions from other AU or ASU hash functions. For all these methods, we discuss if and how seed optimality of the original functions can lead to seed optimality of the resulting functions.

For a practical application of ACFU hash functions, it will be important to find examples which have a low computational complexity. All our examples can be computed efficiently, although their complexities differ in concrete detail. We will not consider this issue further; more on this was said in \cite{mosaics} on the examples given there, some of which we will meet later on in this paper (Examples \ref{ex:aff_OCFU}, \ref{ex:Denniston} and \ref{ex:TD}).

\section{Preliminaries on (mosaics of) incidence structures}\label{sect:prelims}

An \textit{incidence structure} is a triple $D=(\mc X,\mc S,I)$, where $\mc X$ and $\mc S$ are finite sets and $I$ is an incidence relation on $\mc X\times\mc S$. $\mc X$ is called the \textit{point set} and $\mc S$ the \textit{block index set} of $D$. We also say that $D$ is an incidence structure \textit{on} $\mc X$. The sets
\[
	\{x\in\mc X:x\text{ incident with }s\},
\]
where $s\in\mc S$, are usually referred to as the \textit{blocks} of $D$. The \textit{dual} of $D$ is the incidence structure $\tilde D$ on $\mc S$ with block index set $\mc X$ and $s$ incident with $x$ in $\tilde D$ if and only if $x$ is incident with $s$ in $D$. 

An incidence structure $D$ is \textit{resolvable} if the block index set can be partitioned into \textit{parallel classes} of constant size in such a way that for each parallel class, a point $x$ is incident with a unique element of the class. (Note that when considering the partition of $\mc X$ into the blocks corresponding to a given parallel class, some of these blocks may be empty.) If $D$ is resolvable, one can index the block indices of each parallel class by a set $\mc A$, and the block index set $\mc S$ of $D$ can be written as a Cartesian product $\mc S=\mc H\times\mc A$, where $\mc H$ is an index set for the parallel classes.

Two incidence structures $D=(\mc X,\mc S,I)$ and $D'=(\mc X',\mc S',I')$ are called \textit{isomorphic} if there exist bijective mappings $\phi:\mc X\to\mc X'$ and $\psi:\mc S\to\mc S'$ such that any $x\in\mc X$ and $s\in\mc S$ are incident in $D$ if and only if $\phi(x)$ is incident with $\psi(s)$ in $D'$.

A \textit{balanced incomplete block design (BIBD)} on a finite set $\mc X$ is an incidence structure $(\mc X,\mc S,I)$ for which there exist positive integers $k$ and $\lambda$ such that
\begin{enumerate}
	\item each $s\in\mc S$ is incident with exactly $k$ elements of $\mc X$,
	\item for any two distinct $x,x'\in\mc X$, there are exactly $\lambda$ elements of $\mc S$ incident with both $x$ and $x'$.
\end{enumerate}
If $\abs{\mc X}=v$, then such a BIBD is called a BIBD$(v,k,\lambda)$. We call a BIBD \textit{nontrivial} if $1<k<v$. 

In a BIBD$(v,k,\lambda)$ with $\abs{\mc S}=b$, a point $x$ is incident with exactly $r$ block indices $s$, where $r$ satisfies
\begin{equation}\label{eq:1-design}
	bk=vr.
\end{equation}
Another important relation is
\begin{equation}\label{eq:lambda_v_r_k}
	\lambda(v-1)=r(k-1).
\end{equation}

A BIBD is \textit{quasi-symmetric} if there are two distinct numbers $\mu_1,\mu_2$ (the \textit{intersection numbers}) such that two blocks intersect in either $\mu_1$ or $\mu_2$ points. By definition, the only BIBDs which are both resolvable and quasi-symmetric are the \textit{affine designs}; they satisfy
\begin{equation}\label{eq:aff_des_blocks}
	b=v+r-1.
\end{equation}
A BIBD is \textit{symmetric} if the intersection of any two distinct blocks has constant size; in this case, the point and the block index sets have the same cardinality.  More details on BIBDs can be found in \cite{BJL_book}, quasi-symmetric designs are treated in depth in \cite{ShrSin_QS_Des}.

Another type of design which will play a role in this paper is the \textit{net} \cite{Dembowski}. It satisfies
\begin{enumerate}
	\item To every point (block index) there exist two block indices (points) not incident with it,
	\item two points are incident with at most one common block index,
	\item if a point $x$ is not incident with a block index $s$, then there exists one and only one block index $s'$ incident with $x$ such that the blocks pertaining to $s$ and $s'$ have empty intersection.
\end{enumerate}
Note that a net is resolvable. A point of a net is incident with a constant number of block indices, which means that nets satisfy \eqref{eq:1-design} as well. By definition, a \textit{transversal design} is the dual of a net \cite{BJL_book}, so its point set can be partitioned into point classes such that two distinct points are joined by a unique block index if and only if they are contained in different point classes. 

Let $\mc A$ be a finite set. A \textit{mosaic of incidence structures on $\mc X$} is a family $M=(D_\alpha)_{\alpha\in\mc A}$ such that for some set $\mc S$,
\begin{enumerate}
	\item each $D_\alpha$ is an incidence structure with point set $\mc X$ and block index set $\mc S$,
	\item every pair $(x,s)\in\mc X\times\mc S$ is incident in a unique $D_\alpha$.
\end{enumerate}
The incidence structures $D_\alpha$ are called the \textit{members} of $M$. We call $M$ \textit{nontrivial} if $2\leq\abs{\mc A}<\abs{\mc X}$.

The \textit{sum} $\Sigma M$ of $M$ is the incidence structure on $\mc X$ with block index set $\mc S\times\mc A$ where $x$ is incident with $(s,\alpha)$ if and only if $x$ is incident with $s$ in $D_\alpha$. The \textit{dual} of $M$ is the mosaic $\tilde M=(\tilde D_\alpha)_{\alpha\in\mc A}$ on $\mc S$ with block index set $\mc X$, where each $\tilde D_\alpha$ is the dual of $D_\alpha$. By a \textit{mosaic of BIBD$(v,k,\lambda)$}, we mean a mosaic each of whose members is a BIBD$(v,k,\lambda)$. Mosaics of BIBDs were defined in \cite{GGP_mosaics}. Mosaics of incidence structures each of whose members is a BIBD, though with different parameters, have appeared earlier in the literature \cite{CKZ_tilings}. Mosaics of group divisible designs and duals thereof were defined in \cite{mosaics}.

The connection of mosaics of incidence structures with functions is the following. Let $f:\mc X\times\mc S\to\mc A$ be any function. Then each inverse image $f^{-1}(\alpha)$ determines an incidence relation $D_\alpha$ on $\mc X$ with block index set $\mc S$, in such a way that $x$ is incident with $s$ if and only if $f(x,s)=\alpha$. Thus every $f:\mc X\times\mc S\to\mc A$ determines a unique mosaic $M(f)$ of incidence structures on $\mc X$. Conversely, every mosaic $M=(D_\alpha)_{\alpha\in\mc A}$ gives rise to a unique function $f_M:\mc X\times\mc S\to\mc A$, where $\mc X$ is the point set and $\mc S$ the block index set of the mosaic. 

We note the following connection of mosaics with resolvability.

\begin{lem}\label{lem:mos_res}
	For any finite set $\mc X$, there is a one-to-one correspondence between the mosaics of incidence structures $M$ on $\mc X$ and the resolvable incidence structures $D$ on $\mc X$ such that $\Sigma M=D$.
\end{lem}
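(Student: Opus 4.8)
The plan is to prove the statement by exhibiting the correspondence explicitly: it will be the map $M\mapsto\Sigma M$, and I would construct its inverse by hand. Everything reduces to unwinding the definitions of ``mosaic'', ``sum'' and ``resolvable''.

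First I would take a mosaic $M=(D_\alpha)_{\alpha\in\mc A}$ on $\mc X$ with block index set $\mc S$ and verify that $\Sigma M$ is resolvable. Its block index set is $\mc S\times\mc A$, and I would use the partition into the classes $P_s=\{(s,\alpha):\alpha\in\mc A\}$ for $s\in\mc S$; these partition $\mc S\times\mc A$ and all have size $\abs{\mc A}$. A point $x$ is incident with $(s,\alpha)$ in $\Sigma M$ exactly when $x$ is incident with $s$ in $D_\alpha$, and the defining property of a mosaic says that $(x,s)$ is incident in a unique member $D_\alpha$; hence $x$ is incident with exactly one element of each $P_s$, so $\Sigma M$ is resolvable with the $P_s$ as parallel classes, its block index set being presented as the product $\mc S\times\mc A$ in accordance with the convention of Section~\ref{sect:prelims}.

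For the reverse direction, given a resolvable incidence structure $D$ on $\mc X$ I would invoke that convention to write its block index set as $\mc S\times\mc A$ in such a way that the sets $\{(s,\alpha):\alpha\in\mc A\}$ are its parallel classes, and then define, for each $\alpha\in\mc A$, the incidence structure $D_\alpha$ on $\mc X$ with block index set $\mc S$ by declaring $x$ incident with $s$ in $D_\alpha$ iff $x$ is incident with $(s,\alpha)$ in $D$. Then $M:=(D_\alpha)_{\alpha\in\mc A}$ is a mosaic, because $x$ is incident with exactly one element of each parallel class $\{(s,\alpha):\alpha\in\mc A\}$ of $D$, and $\Sigma M=D$ follows straight from the definitions. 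These two assignments are visibly inverse to one another: reconstructing the members from $\Sigma M$ returns, for each $\alpha$, the incidence structure whose blocks are $\{x:x\text{ incident with }(s,\alpha)\text{ in }\Sigma M\}=\{x:x\text{ incident with }s\text{ in }D_\alpha\}$, i.e.\ $D_\alpha$ itself, while in the other direction we have just checked $\Sigma M=D$. I do not expect a genuine obstacle here; the only delicate point is bookkeeping, namely keeping the product presentation $\mc S\times\mc A$ of the block index set of $\Sigma M$ synchronized with the presentation of the block index set of a resolvable structure fixed in Section~\ref{sect:prelims}, so that ``$\Sigma M=D$'' is an honest equality of incidence structures with identical block index sets rather than merely an isomorphism.
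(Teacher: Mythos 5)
Your proof is correct and follows essentially the same route as the paper's: the parallel classes of $\Sigma M$ are the sets $\{(s,\alpha):\alpha\in\mc A\}$, and conversely a resolvable $D$ with block index set presented as $\mc S\times\mc A$ is sliced into members $D_\alpha$ by fixing $\alpha$. You even spell out the mutual-inverse check and the bookkeeping of the product presentation, which the paper leaves implicit.
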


\begin{proof}
    Let $M=(D_\alpha)_{\alpha\in\mc A}$ be a mosaic on $\mc X$ with block index set $\mc S$. For each $s\in\mc S$, the set of pairs $\{(s,\alpha):\alpha\in\mc A\}$ forms a parallel class for $\Sigma M$. Conversely, let $D$ be a resolvable incidence structure on $\mc X$ with block index set $\mc H\times\mc A$, where $\mc H$ is an index set for the parallel classes of $D$ and $\mc A$ is an index set for the elements of each parallel class. For each $\alpha\in\mc A$, define the incidence structure $D_\alpha$ on $\mc X$ with block index set $\mc H$ by letting a point $x$ be incident with block index $h$ in $D_\alpha$ if and only if $x$ is incident with $(h,\alpha)$. Clearly, $(D_\alpha)_{\alpha\in\mc A}$ is a mosaic of incidence structures on $\mc X$.
\end{proof}

\section{Structure of ACFU hash functions}\label{sect:seed_lbs}

In order to simplify our notation when we analyze $\varepsilon$-ACFU hash functions in terms of mosaics of incidence structures, we introduce two types of blocks associated with a function $f:\mc X\times\mc S\to\mc A$. For $x\in\mc X$ and $\alpha\in\mc A$, we define
\begin{equation}\label{eq:primal_blocks}
    B_{x,\alpha}=\{s\in\mc S:f(x,s)=\alpha\}
\end{equation}
and for $s\in\mc S$ and $\alpha\in\mc A$, we set
\begin{equation}\label{eq:dual_blocks}
    B_{s,\alpha}=\{x\in\mc X:f(x,s)=\alpha\}.
\end{equation}
We will call a set of any of these types a block. Note that the blocks $B_{s,\alpha}$ ($s\in\mc S$) are precisely the blocks of the $\alpha$-th member $D_\alpha$ of $M(f)$, whereas the blocks $B_{x,\alpha}$ ($x\in\mc X$) are the blocks of the dual of $D_\alpha$. The function $f$ is an $\varepsilon$-ACFU hash function if the $B_{x,\alpha}$ have constant size and if the intersection $B_{x,\alpha}\cap B_{x',\alpha}$ for distinct $x,x'\in\mc X$ has cardinality at most $\varepsilon\abs{\mc S}/\abs{\mc A}$.

We start our analysis by observing that $\varepsilon$ cannot be arbitrarily small for an $\varepsilon$-ACFU hash function.

\begin{lem}\label{lem:epsilon_lb}
    For any $\varepsilon$-ACFU hash function $f:\mc X\times\mc S\to\mc A$, 
    \begin{equation}\label{eq:eps_lb}
        \varepsilon\geq\frac{\abs{\mc X}-\abs{\mc A}}{\abs{\mc A}(\abs{\mc X}-1)}.
    \end{equation}
    If equality holds, then $\Sigma M(f)$ (the sum of the mosaic determined by $f$, see Section \ref{sect:prelims}) is a resolvable BIBD on $\mc X$.
\end{lem}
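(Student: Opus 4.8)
The plan is a double-counting argument carried out separately within each member $D_\alpha$ of the mosaic $M(f)$, closed off with a Cauchy--Schwarz estimate; the equality statement then follows by tracing which inequalities were tight. Write $v=\abs{\mc X}$, $a=\abs{\mc A}$, $n=\abs{\mc S}$ (so $a\mid n$ by (ACFU1)), and for $s\in\mc S$, $\alpha\in\mc A$ put $m_{s,\alpha}=\abs{B_{s,\alpha}}$, the size of the block of $D_\alpha$ indexed by $s$. If $\abs{\mc X}\le\abs{\mc A}$ the right-hand side of \eqref{eq:eps_lb} is non-positive and the bound holds trivially, so assume $v>a$. Fixing $\alpha$ and counting the incident pairs $(x,s)$ with $f(x,s)=\alpha$ in two ways gives $\sum_{s}m_{s,\alpha}=\sum_{x}\abs{B_{x,\alpha}}=vn/a$ by (ACFU1). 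Counting the triples $(x,x',s)$ with $x\ne x'$ and $f(x,s)=f(x',s)=\alpha$ in two ways gives $\sum_{s}m_{s,\alpha}(m_{s,\alpha}-1)=\sum_{x\ne x'}\abs{B_{x,\alpha}\cap B_{x',\alpha}}$, which by (ACFU2) is at most $v(v-1)\varepsilon n/a$.

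On the other hand, Cauchy--Schwarz gives $\sum_{s}m_{s,\alpha}^2\ge\bigl(\sum_{s}m_{s,\alpha}\bigr)^2/n=v^2n/a^2$, hence $\sum_{s}m_{s,\alpha}(m_{s,\alpha}-1)\ge vn(v-a)/a^2$. Combining the two estimates and dividing by $vn/a>0$ yields $\varepsilon\ge(v-a)/(a(v-1))$, which is \eqref{eq:eps_lb}.

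For the equality case, suppose $\varepsilon=(v-a)/(a(v-1))$. Then for \emph{every} $\alpha$ the displayed chain of inequalities has equal endpoints, so both steps are tight: equality in Cauchy--Schwarz forces $m_{s,\alpha}=v/a$ for all $s$, i.e.\ every $D_\alpha$ has constant block size $k:=v/a$; and tightness of the (ACFU2) bound forces $\abs{B_{x,\alpha}\cap B_{x',\alpha}}=\varepsilon n/a$ for all distinct $x,x'$ and all $\alpha$. Recall from Section \ref{sect:prelims} that $\Sigma M(f)$ is the incidence structure on $\mc X$ with block index set $\mc S\times\mc A$ whose block at $(s,\alpha)$ is $B_{s,\alpha}$. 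Thus every block of $\Sigma M(f)$ has size $k=v/a$, and for distinct $x,x'$ the number of blocks containing both is $\sum_{\alpha}\abs{B_{x,\alpha}\cap B_{x',\alpha}}=a\cdot\varepsilon n/a=\varepsilon n=:\lambda$, a constant; it is a positive integer since it counts blocks and $v>a$. Hence $\Sigma M(f)$ is a BIBD$(v,k,\lambda)$, and it is resolvable by Lemma \ref{lem:mos_res}.

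The argument is essentially routine and I expect no genuine obstacle. The only points needing care are that the equality conclusions must be read off for each $\alpha$ individually (legitimate because, when the final bound is tight, each per-$\alpha$ inequality chain has equal first and last term), and that $k=v/a$ and $\lambda=\varepsilon n$ indeed come out as positive integers --- the former because equality in Cauchy--Schwarz makes it the common value of the integers $m_{s,\alpha}$, the latter because it counts blocks of $\Sigma M(f)$ through a pair of points.
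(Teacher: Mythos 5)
Your proof is correct, but it takes a genuinely different route from the paper's. The paper disposes of this lemma by reduction: an $\varepsilon$-ACFU hash function is in particular $\varepsilon$-AU (Lemma \ref{lem:hash_fct_rels}), so the inequality is Sarwate's bound for AU hash functions (Lemma \ref{lem:opt_eps_lem}) and the equality statement is Stinson's characterization (Lemma \ref{lem:Stinson_lemma}); no new computation is done. You instead give a self-contained double-counting argument, and you carry it out \emph{per member} $D_\alpha$ of the mosaic rather than aggregating all collisions as in the AU setting. Both are valid; the paper's approach buys brevity and offloads the work to cited results, while yours is elementary and yields strictly more in the equality case: you show not merely that $\Sigma M(f)$ is a resolvable BIBD but that \emph{each} $D_\alpha$ has constant block size $v/a$ and constant pairwise point-intersection $\varepsilon n/a$, i.e.\ each member is itself a BIBD$(v,v/a,\varepsilon n/a)$ --- which is essentially the content of the paper's later theorem on OCFU hash functions and mosaics of BIBDs, obtained there by a separate counting argument. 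The two points you flag (reading off tightness per $\alpha$, and integrality of $k$ and $\lambda$) are handled correctly: the per-$\alpha$ chain has identical endpoints when $\varepsilon$ is optimal, $k$ is the common value of the integers $m_{s,\alpha}$, and $\lambda=\varepsilon n$ is a positive integer because it counts blocks through a pair of points and $v>a$. The only cosmetic caveat is the degenerate case $\abs{\mc X}=1$, where the right-hand side of \eqref{eq:eps_lb} is undefined; the paper ignores this too.
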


If $\abs{\mc X}$ and $\abs{\mc A}$ are given, we call the quantity on the right-hand side of \eqref{eq:eps_lb} the \textit{optimal} $\varepsilon$. The proof of Lemma \ref{lem:epsilon_lb} follows immediately from the first part of Lemma \ref{lem:hash_fct_rels} together with the following results of Sarwate \cite{Sarwate_UnivHashFcts} and Stinson \cite{Stinson_comb_tech_UH}.

\begin{lem}[\cite{Sarwate_UnivHashFcts}, p.~42 and \cite{Stinson_comb_tech_UH}, Theorem 1.1]\label{lem:opt_eps_lem}
	For any $\varepsilon$-AU hash function, $\varepsilon$ satisfies \eqref{eq:eps_lb}.
\end{lem}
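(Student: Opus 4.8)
The plan is a double-counting argument on the number of collisions. First I would fix notation: for each seed $s\in\mc S$ and each value $\alpha\in\mc A$, write $m_{s,\alpha}=\abs{\{x\in\mc X:f(x,s)=\alpha\}}$ for the size of the fiber of $f(\cdot,s)$ over $\alpha$, so that $\sum_{\alpha\in\mc A}m_{s,\alpha}=\abs{\mc X}$ for every $s$.

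Next I would estimate the cardinality of the set of ordered colliding triples
\[
    C=\{(x,x',s)\in\mc X\times\mc X\times\mc S:x\neq x',\ f(x,s)=f(x',s)\}
\]
in two ways. Summing over $s$ first, for a fixed seed $s$ the number of ordered colliding pairs equals $\sum_{\alpha\in\mc A}m_{s,\alpha}(m_{s,\alpha}-1)=\sum_{\alpha\in\mc A}m_{s,\alpha}^2-\abs{\mc X}$, which by the Cauchy--Schwarz inequality (equivalently, by convexity of $t\mapsto t^2$) is at least $\abs{\mc X}^2/\abs{\mc A}-\abs{\mc X}=\abs{\mc X}(\abs{\mc X}-\abs{\mc A})/\abs{\mc A}$; summing over all $s\in\mc S$ therefore gives $\abs{C}\geq\abs{\mc S}\,\abs{\mc X}(\abs{\mc X}-\abs{\mc A})/\abs{\mc A}$. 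Summing instead over the ordered pairs $(x,x')$ with $x\neq x'$ first, the defining almost-universality property bounds the number of seeds $s$ with $f(x,s)=f(x',s)$ by $\varepsilon\abs{\mc S}$ for each of the $\abs{\mc X}(\abs{\mc X}-1)$ such pairs, so $\abs{C}\leq\varepsilon\abs{\mc S}\,\abs{\mc X}(\abs{\mc X}-1)$. Comparing the two estimates and cancelling the common positive factor $\abs{\mc S}\,\abs{\mc X}$ yields $(\abs{\mc X}-\abs{\mc A})/\abs{\mc A}\leq\varepsilon(\abs{\mc X}-1)$, which is exactly \eqref{eq:eps_lb}.

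There is essentially no hard step here; the argument is elementary. The one point worth noting for later use is that the lower bound on $\sum_{\alpha}m_{s,\alpha}^2$ holds with equality precisely when every fiber of $f(\cdot,s)$ has the common size $\abs{\mc X}/\abs{\mc A}$, and the almost-universality bound is tight precisely when every pair of distinct points collides in exactly $\varepsilon\abs{\mc S}$ seeds. These two observations are what will drive the equality analysis of Lemma \ref{lem:epsilon_lb} (combined with \eqref{eq:lambda_v_r_k}), but for the present statement only the inequality is needed, so I would simply record the two counts and the comparison above.
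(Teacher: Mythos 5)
Your double-counting argument is correct: the two counts of the colliding-triple set, the Cauchy--Schwarz step giving $\sum_\alpha m_{s,\alpha}^2\geq\abs{\mc X}^2/\abs{\mc A}$, and the comparison yielding \eqref{eq:eps_lb} are all valid (modulo the degenerate case $\abs{\mc X}=1$, where the bound is vacuous anyway). The paper itself gives no proof of this lemma, citing Sarwate and Stinson instead, and your argument is precisely the classical one from those references, so there is nothing further to compare.
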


\begin{lem}[\cite{Stinson_comb_tech_UH}, Theorem 2.1]\label{lem:Stinson_lemma}
    \begin{enumerate}
    	\item If $f:\mc X\times\mc S\to\mc A$ is an $\varepsilon$-AU hash function where $\varepsilon$ equals the right-hand side of \eqref{eq:eps_lb}, then $\Sigma M(f)$ is a resolvable BIBD.
    	\item Conversely, let $D$ be a resolvable BIBD on $\mc X$. Then, the function $f_M$ determined by the mosaic $M$ corresponding to $D$ by Lemma \ref{lem:mos_res} is an $\varepsilon$-AU hash function with $\varepsilon$ satisfying equality in \eqref{eq:eps_lb}.
    \end{enumerate}
\end{lem}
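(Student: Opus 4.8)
The plan is to handle the two halves separately, in each case reducing everything to counting collisions $f(x,s)=f(x',s)$. Throughout I write $v=\abs{\mc X}$, $m=\abs{\mc A}$, $b=\abs{\mc S}$, and I use that $\Sigma M(f)$ has point set $\mc X$ and block index set $\mc S\times\mc A$, the block indexed by $(s,\alpha)$ being $B_{s,\alpha}=\{x:f(x,s)=\alpha\}$. For part~1), resolvability of $\Sigma M(f)$ comes for free: for fixed $s$ the sets $B_{s,\alpha}$ ($\alpha\in\mc A$) partition $\mc X$ because $f(x,s)$ is single-valued, so $\{(s,\alpha):\alpha\in\mc A\}$ is a parallel class and these cover $\mc S\times\mc A$; this is exactly Lemma~\ref{lem:mos_res}.

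To get the two BIBD axioms I would run a double-count/convexity argument. For distinct $x,x'$ put $c_{x,x'}=\abs{\{s:f(x,s)=f(x',s)\}}$; this equals the number of block indices $(s,\alpha)$ of $\Sigma M(f)$ incident with both $x$ and $x'$, and by (AU) it is $\le\varepsilon b$. Summing over unordered pairs and regrouping each collision by the value $\alpha$ in which it occurs gives $\sum_{\{x,x'\}}c_{x,x'}=\sum_{s}\sum_{\alpha}\binom{\abs{B_{s,\alpha}}}{2}$. Since $\sum_\alpha\abs{B_{s,\alpha}}=v$ for each $s$, convexity of $n\mapsto\binom n2$ gives $\sum_\alpha\binom{\abs{B_{s,\alpha}}}{2}\ge m\binom{v/m}{2}$, with equality iff all $\abs{B_{s,\alpha}}$ equal $v/m$. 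Chaining the two estimates yields $\tfrac{v(v-1)}{2}\varepsilon b\ge b\,m\binom{v/m}{2}$, which re-derives \eqref{eq:eps_lb} (in particular Lemma~\ref{lem:opt_eps_lem}); and the point is that when $\varepsilon$ equals the optimal value, both inequalities must be equalities, forcing $c_{x,x'}=\varepsilon b$ for every pair and $\abs{B_{s,\alpha}}=v/m$ for every $(s,\alpha)$. Hence every block of $\Sigma M(f)$ has size $v/m$ and every pair of points lies in exactly $\varepsilon b=\tfrac{(v-m)b}{m(v-1)}$ blocks, i.e.\ $\Sigma M(f)$ is a resolvable BIBD.

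For part~2), let $D$ be a resolvable BIBD$(v,k,\lambda)$ on $\mc X$. Each parallel class partitions $\mc X$ into blocks of size $k$, so every parallel class has exactly $v/k$ blocks; I may therefore take $\abs{\mc A}=m:=v/k$ to index each parallel class, write the block index set of $D$ as $\mc H\times\mc A$ with $\mc H$ indexing the parallel classes, and let $f_M:\mc X\times\mc H\to\mc A$ be the function of the corresponding mosaic from Lemma~\ref{lem:mos_res}, so $f_M(x,h)=\alpha$ iff $x$ is incident with $(h,\alpha)$ in $D$ (well defined since each parallel class partitions $\mc X$). For distinct $x,x'$, $\abs{\{h:f_M(x,h)=f_M(x',h)\}}$ counts the block indices $(h,\alpha)$ of $D$ through both $x$ and $x'$, which is $\lambda$ because $D$ is a BIBD; hence $f_M$ is $\varepsilon$-AU with $\varepsilon=\lambda/\abs{\mc H}$. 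It remains to compute this ratio: counting the blocks of $D$ two ways and using \eqref{eq:1-design} gives $\abs{\mc H}=b/m=vr/(km)=r$ (because $km=v$), and then \eqref{eq:lambda_v_r_k} with $k=v/m$ gives $\varepsilon=\lambda/r=(k-1)/(v-1)=\tfrac{v-m}{m(v-1)}$, i.e.\ equality in \eqref{eq:eps_lb}.

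The step I expect to be the real work is the equality analysis in part~1: extracting \eqref{eq:eps_lb} from the aggregate inequality is routine, but one must argue carefully that equality in the \emph{sum} forces equality \emph{term by term} --- that the collision count is exactly $\varepsilon b$ for \emph{every} pair \emph{and} that \emph{every} block has size $v/m$ (so in particular $m\mid v$) --- since otherwise one does not obtain constant $\lambda$ and constant block size simultaneously. Part~2 is mostly bookkeeping; the only spot needing a word of justification is that a resolvable BIBD automatically has all parallel classes of the same size $v/k$, which is what lets me take $\abs{\mc A}=v/k$.
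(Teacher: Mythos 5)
Your proof is correct. Note that the paper does not actually prove this lemma --- it is imported verbatim from Stinson (\cite{Stinson_comb_tech_UH}, Theorem 2.1) --- so there is no in-paper argument to compare against; your double-count of collisions $\sum_{\{x,x'\}}c_{x,x'}=\sum_{s,\alpha}\binom{\abs{B_{s,\alpha}}}{2}$ combined with convexity, and the term-by-term equality analysis at the optimal $\varepsilon$, is the standard route and you execute it cleanly, including the two points that genuinely need care (equality in the aggregate forcing $c_{x,x'}=\varepsilon\abs{\mc S}$ for every pair and $\abs{B_{s,\alpha}}=\abs{\mc X}/\abs{\mc A}$ for every block, and the fact that a resolvable BIBD has parallel classes of size exactly $v/k$ with no empty blocks). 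The only implicit assumption worth flagging is nontriviality ($\abs{\mc A}<\abs{\mc X}$), without which the optimal $\varepsilon$ is $0$ and $\lambda=0$ fails to give a BIBD; the paper's statement leaves this tacit as well.
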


We call an $\varepsilon$-ACFU hash function with optimal $\varepsilon$ an \textit{optimally collision-flat universal (OCFU)} hash function. An $\varepsilon$-AU hash function with optimal $\varepsilon$ is called \textit{optimally universal (OU)} by Sarwate and Stinson.

\subsection{General $\varepsilon$}

We start the analysis of the structure of ACFU hash functions by giving two bounds on the size of $\mc S$. Then we give criteria for equality and discuss the relation between the bounds.

\begin{thm}\label{thm:seed_size_lb}
    For a nontrivial $\varepsilon$-ACFU hash function $f:\mc X\times\mc S\to\mc A$, it holds that
    \begin{equation}\label{eq:seed_size_lb_variance}
        \abs{\mc S}
        \geq 1+\frac{\abs{\mc X}(\abs{\mc A}-1)^2}{\varepsilon\abs{\mc A}(\abs{\mc X}-\abs{\mc A})+\abs{\mc A}^2-\abs{\mc X}}
    \end{equation}
    and
    \begin{equation}\label{eq:seed_size_lb_simple}
        \abs{\mc S}\geq\frac{\abs{\mc A}}{\varepsilon}.
    \end{equation}
\end{thm}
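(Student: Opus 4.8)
The plan is to exploit the two defining properties of an $\varepsilon$-ACFU hash function and count incidences in two different ways. Let me set $v=|\mathcal X|$, $b=|\mathcal S|$, $q=|\mathcal A|$, and for each point $x$ and each value $\alpha$ write $B_{x,\alpha}$ for the primal block as in \eqref{eq:primal_blocks}; by (ACFU1) each has size $b/q$. The second, simpler bound \eqref{eq:seed_size_lb_simple} should come out almost immediately: fix one value $\alpha$ and count the pairs $(\{x,x'\},s)$ with $x\neq x'$ and $f(x,s)=f(x',s)=\alpha$. Summing over $s$, the number of such pairs is $\sum_{s}\binom{|B_{s,\alpha}|}{2}$, while summing over unordered point-pairs and using (ACFU2) gives an upper bound of $\binom{v}{2}\cdot \varepsilon b/q$. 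On the other hand $\sum_s |B_{s,\alpha}| = \sum_x |B_{x,\alpha}| = v\cdot b/q$, so by convexity (Jensen) $\sum_s \binom{|B_{s,\alpha}|}{2} \geq b\binom{(v/q)}{2}$ — but actually the cleanest route to \eqref{eq:seed_size_lb_simple} is even more direct: fix a single point $x$, and note that $\mathcal S = \bigcup_{\alpha} B_{x,\alpha}$ is a partition into $q$ blocks each of size $b/q$; pick any second point $x'\neq x$, then for the value $\alpha^{*}=f(x',s)$ one has... I would instead argue: since $f(x,\cdot)$ and $f(x',\cdot)$ are each balanced, there is some $\alpha$ with $|B_{x,\alpha}\cap B_{x',\alpha}|\geq |B_{x,\alpha}|\,|B_{x',\alpha}|/b \geq (b/q)^{2}/b = b/q^{2}$ on average over $\alpha$ (since $\sum_\alpha |B_{x,\alpha}\cap B_{x',\alpha}| = |\{s: f(x,s)=f(x',s)\}|$, but more simply $\sum_\alpha |B_{x,\alpha}\cap B_{x',\alpha}|\geq$ ... ). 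Combining $\max_\alpha|B_{x,\alpha}\cap B_{x',\alpha}|\geq b/q^2$ — which follows because the $q$ sets $B_{x,\alpha}$ of size $b/q$ cover $\mathcal S$, so by pigeonhole applied to the fixed set $B_{x',\alpha_0}$ of size $b/q$ one of its elements meets some $B_{x,\alpha}$ with multiplicity... — with (ACFU2) yields $b/q^{2}\leq \varepsilon b/q$, i.e. $b\geq q/\varepsilon$. I will need to make this pigeonhole step precise, but it is elementary.

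For the variance bound \eqref{eq:seed_size_lb_variance}, which is the substantive one, I would use a second-moment / variance argument on the block-size distribution of a single mosaic member. Fix a value $\alpha$ and consider the dual blocks $B_{s,\alpha}$, $s\in\mathcal S$, i.e. the blocks of $D_\alpha$. Let $r_\alpha(x)=|B_{x,\alpha}|=b/q$ (constant), let $k_s = |B_{s,\alpha}|$, and consider the quantities $\sum_s 1 = b$, $\sum_s k_s = \sum_x r_\alpha(x) = vb/q$, and $\sum_s k_s(k_s-1) = \sum_{x\neq x'} |B_{x,\alpha}\cap B_{x',\alpha}| \leq v(v-1)\,\varepsilon b/q$ by (ACFU2). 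A Cauchy–Schwarz (or Fisher-type variance) inequality then gives $b\cdot \sum_s k_s^{2} \geq (\sum_s k_s)^{2}$, so $\sum_s k_s^{2}\geq v^{2}b/q^{2}$, whence $\sum_s k_s(k_s-1)\geq v^{2}b/q^{2} - vb/q$. Chaining the two estimates on $\sum_s k_s(k_s-1)$ gives an inequality relating $v,q,\varepsilon$ only — but that produces the bound on $\varepsilon$ of Lemma \ref{lem:epsilon_lb}, not on $b$. To get $b$ into the bound I must instead run the variance argument over all of $\mathcal X\times\mathcal A$ simultaneously, or — more promisingly — apply the classical BIBD-style argument to the sum incidence structure $\Sigma M(f)$: it has $v$ points and $b'=bq$ block-indices $(s,\alpha)$, each point lies on exactly $b/q \cdot q = b$... wait, each point $x$ lies on the blocks $(s,\alpha)$ with $f(x,s)=\alpha$, and there are exactly $b$ such, one per $s$. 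And two distinct points $x,x'$ lie on a common block $(s,\alpha)$ iff $f(x,s)=f(x',s)=\alpha$, the number of which is $\leq \varepsilon b/q$ per value but summed over $\alpha$ is just $|\{s:f(x,s)=f(x',s)\}|$, so the joint replication is not constant — hence $\Sigma M(f)$ is a $1$-design but generally not a $2$-design, and I should apply a variance/Fisher-type inequality (as in the proof of Fisher's inequality with non-constant $\lambda$) to it. Writing $\lambda_{x,x'}$ for the number of blocks of $\Sigma M(f)$ through both $x$ and $x'$, one has $\sum_{x'\neq x}\lambda_{x,x'} = b(b/q - 1)$... no: for fixed $x$, $\sum_{x'\neq x}\lambda_{x,x'} = \sum_{(s,\alpha)\ni x} (|B_{s,\alpha}|-1) = \sum_s(|B_{s,f(x,s)}|-1)$; and $\sum_{x'\neq x}\lambda_{x,x'}^2$ or the variance of the $\lambda_{x,x'}$ can be controlled using the constraint $\lambda_{x,x'}\leq q\cdot\varepsilon b/q = \varepsilon b$ — actually $\lambda_{x,x'} = \sum_\alpha|B_{x,\alpha}\cap B_{x',\alpha}|\le$ (number of nonempty terms)$\cdot \varepsilon b/q$. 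The honest approach: bound the variance $\frac{1}{v-1}\sum_{x'\neq x}(\lambda_{x,x'} - \bar\lambda)^{2}\geq 0$ together with $\lambda_{x,x'}\le \varepsilon b/q$ applied per-value, summed appropriately, then optimize; this is exactly the kind of computation that yields a denominator of the shape $\varepsilon q(v-q)+q^{2}-v$.

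So my concrete recipe for \eqref{eq:seed_size_lb_variance}: (i) fix $x\in\mathcal X$ and $\alpha\in\mathcal A$ and look at the $v-1$ numbers $n_{x'} := |B_{x,\alpha}\cap B_{x',\alpha}|$ for $x'\neq x$; (ii) compute $\sum_{x'} n_{x'} = \sum_{s\in B_{x,\alpha}}(|B_{s,\alpha}| - 1)$ and $\sum_{s\in B_{x,\alpha}} |B_{s,\alpha}| = \sum_{x'} |B_{x',\alpha}\cap B_{x,\alpha}|$... I will rather pick the identity: partitioning $B_{x,\alpha}$ (size $b/q$) according to which other points land in $\alpha$, use $\sum_{s\in B_{x,\alpha}}|B_{s,\alpha}| = \sum_{x'\in\mathcal X}|B_{x',\alpha}\cap B_{x,\alpha}| = b/q + \sum_{x'\neq x} n_{x'}$; (iii) apply Cauchy–Schwarz: $\big(\sum_{s\in B_{x,\alpha}}|B_{s,\alpha}|\big)^{2}\le \frac{b}{q}\sum_{s\in B_{x,\alpha}}|B_{s,\alpha}|^{2}$, and bound $\sum_s |B_{s,\alpha}|^{2}$ globally via $\sum_{\alpha}\sum_s|B_{s,\alpha}|^2 = \sum_s\sum_\alpha|B_{s,\alpha}|^2$ and $\sum_\alpha|B_{s,\alpha}| = v$; (iv) feed in (ACFU2) $n_{x'}\le \varepsilon b/q$ and the counting identities; (v) sum over $\alpha$, isolate $b=|\mathcal S|$, and simplify. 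The denominator $\varepsilon\abs{\mc A}(\abs{\mc X}-\abs{\mc A})+\abs{\mc A}^{2}-\abs{\mc X}$ strongly suggests the final inequality is obtained by writing $\varepsilon$ in the form from Lemma \ref{lem:epsilon_lb} plus slack, or by a single application of Cauchy–Schwarz to the full $v\times q$ array of block sizes with the two moment constraints $\sum = vb/q$ per value and $\sum(\cdot)(\cdot-1)\le v(v-1)\varepsilon b/q$ per value, then one more Cauchy–Schwarz across values.

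The main obstacle I anticipate is the bookkeeping in step (v): getting the algebra to collapse to exactly the stated right-hand side, including the leading $+1$ (which signals that the bound is tight for a symmetric-design-like configuration where $b = $ something $+1$, analogous to Fisher's inequality $b\ge v$). The inequality direction and the appearance of $(\abs{\mc A}-1)^2$ in the numerator both point to the argument being the ACFU analogue of the standard variance proof of the generalized Fisher inequality, with $\abs{\mc A}$ playing the role that the block size $k$ plays there; so the cleanest writeup will likely mirror that classical proof applied to $\Sigma M(f)$ or to each member $D_\alpha$ and then averaged. I would also double-check the non-triviality hypothesis $2\le\abs{\mc A}<\abs{\mc X}$ is used exactly where a denominator must be shown positive, i.e. where I divide by $\varepsilon\abs{\mc A}(\abs{\mc X}-\abs{\mc A})+\abs{\mc A}^2-\abs{\mc X}$; positivity of this quantity should follow from Lemma \ref{lem:epsilon_lb}'s bound on $\varepsilon$ together with $\abs{\mc A}<\abs{\mc X}$.
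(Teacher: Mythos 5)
Your proposal has genuine gaps in both halves.

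For \eqref{eq:seed_size_lb_simple}, the claim you ultimately lean on --- that for any two distinct points there is some $\alpha$ with $\abs{B_{x,\alpha}\cap B_{x',\alpha}}\geq\abs{\mc S}/\abs{\mc A}^2$ --- is false. The pigeonhole you sketch partitions the fixed set $B_{x',\alpha_0}$ into the cells $B_{x,\alpha}\cap B_{x',\alpha_0}$, $\alpha\in\mc A$, and therefore produces a large \emph{off-diagonal} intersection with $\alpha\neq\alpha_0$ in general; that is not a same-value collision count. Concretely, in Example \ref{ex:TD} two points in the same point class have $B_{x,\alpha}\cap B_{x',\alpha}=\emptyset$ for every $\alpha$. (Your first, Jensen-based attempt fails for a different reason: $\abs{\mc S}$ cancels and you only recover Lemma \ref{lem:epsilon_lb}.) The argument that works is pure integrality: if $\varepsilon\abs{\mc S}/\abs{\mc A}<1$, then each count $\abs{\{s:f(x,s)=f(x',s)=\alpha\}}$, being a nonnegative integer less than $1$, is zero, so every $f(\cdot,s)$ is injective and $\abs{\mc A}\geq\abs{\mc X}$, contradicting nontriviality.

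For \eqref{eq:seed_size_lb_variance}, your concrete recipe takes the variance over the $\abs{\mc X}-1$ numbers $n_{x'}=\abs{B_{x,\alpha}\cap B_{x',\alpha}}$ for fixed $(x,\alpha)$. This is structurally the wrong side: the number of summands is $\abs{\mc X}-1$, all the available moment identities are homogeneous of degree one in $\abs{\mc S}$, and the second moment $\sum_{x'}n_{x'}(n_{x'}-1)$ counts intersections of \emph{dual} blocks $B_{s,\alpha}\cap B_{s',\alpha}$, which the ACFU axioms do not control. As you half-observe yourself, every such computation collapses to the bound on $\varepsilon$ rather than on $\abs{\mc S}$. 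The missing idea is to transpose the roles: fix $(s,\alpha)$ and take the variance over the $\abs{\mc S}-1$ numbers $\lambda_\sigma=\abs{B_{s,\alpha}\cap B_{\sigma,\alpha}}$, $\sigma\neq s$, where now $B_{s,\alpha}\subseteq\mc X$ is a block of the member $D_\alpha$. Then (ACFU1) gives the first moment exactly, $\sum_{\sigma\neq s}\lambda_\sigma=\abs{B_{s,\alpha}}(\abs{\mc S}/\abs{\mc A}-1)$; (ACFU2) upper-bounds $\sum_{\sigma\neq s}\lambda_\sigma(\lambda_\sigma-1)$ by double-counting over ordered pairs of distinct points of $B_{s,\alpha}$; and the inequality $\sum_{\sigma\neq s}(\lambda_\sigma-\overline\lambda)^2\geq0$ introduces the factor $\abs{\mc S}-1$ that makes $\abs{\mc S}$ appear nontrivially. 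Solving for $\abs{\mc S}$ and then choosing $\alpha$ with $\abs{B_{s,\alpha}}\geq\abs{\mc X}/\abs{\mc A}$ yields \eqref{eq:seed_size_lb_variance}. Your alternative of running a Fisher-type argument on $\Sigma M(f)$ cannot give the stated bound either, since summing over $\alpha$ discards the per-value information in (ACFU2) and at best reproduces the weaker AU bound \eqref{eq:AU_seed_size}.
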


\begin{proof}
    The bound \eqref{eq:seed_size_lb_simple} is easy to see: If $\varepsilon\abs{\mc S}/\abs{\mc A}<1$, then $\abs{\{s:f(x,s)=f(x',s)=\alpha\}}=0$ for every $\alpha$ and all distinct $x,x'$. In particular, each of the functions $x\mapsto f(x,s)$ would be injective, meaning that $\abs{\mc A}\geq\abs{\mc X}$. But this is impossible since we assume that $f$ is nontrivial.

    To prove \eqref{eq:seed_size_lb_variance}, we use the ``variance method'' also used by Stinson \cite{Stinson_UH_AC} to find lower bounds on $\mc S$ in the case of $\varepsilon$-AU and $\varepsilon$-ASU hash functions. Fix an arbitrary $\alpha\in\mc A$ and $s\in\mc S$ and define $\lambda_\sigma=\abs{B_{s,\alpha}\cap B_{\sigma,\alpha}}$ (recall the notation \eqref{eq:primal_blocks}). Then obviously,
    \[
        \sum_{\sigma\neq s}1=\abs{\mc S}-1.
    \]
    Note that $\abs{\mc S}-1\geq 1$ since we can always assume $\varepsilon\leq 1$, so that \eqref{eq:seed_size_lb_simple} implies $\abs{\mc S}\geq\abs{\mc A}\geq 2$.     Also, counting pairs $(x,\sigma)$ such that $\sigma\neq s$ and $x\in B_{s,\alpha}\cap B_{\sigma,\alpha}$, and using property (ACFU1), we find that
    \begin{align*}
        \sum_{\sigma\neq s}\lambda_\sigma
        =\sum_{x:f(x,s)=\alpha}\abs{\{\sigma\neq s:f(x,\sigma)=\alpha\}}
        &=\abs{B_{s,\alpha}}\left(\frac{\abs{\mc S}}{\abs{\mc A}}-1\right).
    \end{align*}
    Moreover, counting triples $(\sigma,x,x')$ with $\sigma\neq s$ as well as $x\neq x'$ and $x,x'\in B_{s,\alpha}\cap B_{\sigma,\alpha}$, we find using property (ACFU2) that
    \begin{align*}
        &\sum_{\sigma\neq s}\lambda_\sigma(\lambda_\sigma-1)\\
        &=\sum_{x:f(x,s)=\alpha}\sum_{x'\neq x:f(x',s)=\alpha}\abs{\{\sigma\neq s:f(x,\sigma)=f(x',\sigma)=\alpha\}}\\
        &\leq\left(\frac{\varepsilon\abs{\mc S}}{\abs{\mc A}}-1\right)\abs{B_{s,\alpha}}(\abs{B_{s,\alpha}}-1).
    \end{align*}
    The mean of the $\lambda_\sigma$ is
    \[
        \overline\lambda=\frac{\abs{B_{s,\alpha}}\left(\frac{\abs{\mc S}}{\abs{\mc A}}-1\right)}{\abs{\mc S}-1}.
    \]
    Hence
    \begin{align*}
        0
        &\leq \sum_{\sigma\neq s}(\lambda_\sigma-\overline\lambda)^2\\
        &=\sum_{\sigma\neq s}\lambda_\sigma^2-(\abs{\mc S}-1)\overline\lambda^2\\
        &\leq\left(\frac{\varepsilon\abs{\mc S}}{\abs{\mc A}}-1\right)\abs{B_{s,\alpha}}(\abs{B_{s,\alpha}}-1)+\abs{B_{s,\alpha}}\left(\frac{\abs{\mc S}}{\abs{\mc A}}-1\right)-\frac{\abs{B_{s,\alpha}}^2\left(\frac{\abs{\mc S}}{\abs{\mc A}}-1\right)^2}{\abs{\mc S}-1}.
    \end{align*}
    Solving this for $\abs{\mc S}$, one sees that it implies that 
    \begin{equation}\label{eq:lb_raw}
        \abs{\mc S}
        \geq 1+\frac{(\abs{\mc A}-1)^2\abs{B_{s,\alpha}}}{\varepsilon\abs{\mc A}(\abs{B_{s,\alpha}}-1)+\abs{\mc A}-\abs{B_{s,\alpha}}}.
    \end{equation}
    In order to make sure that the denominator does not vanish, we observe that if this were the case, we would have
    \[
    	\varepsilon
    	=\frac{\abs{B_{s,\alpha}}-\abs{\mc A}}{\abs{\mc A}(\abs{B_{s,\alpha}}-1)},
    \]
    which in the light of Lemma \ref{lem:epsilon_lb} would mean that $\abs{B_{s,\alpha}}=\abs{\mc X}$. But then $\varepsilon$ would satisfy equality in \eqref{eq:eps_lb} and $f$ would be an OU hash function, so $\abs{B_{s,\alpha}}=\abs{\mc X}/\abs{\mc A}$. However since $f$ is nontrivial, it is impossible that $\abs{\mc A}=1$. Hence the right-hand side of \eqref{eq:lb_raw} is well-defined.
    
    We obtain the claimed bound by observing that for every $s\in\mc S$, there must be a value $\alpha$ such that $\abs{B_{s,\alpha}}\geq\abs{\mc X}/\abs{\mc A}$, and inserting this in \eqref{eq:lb_raw}.
\end{proof}

The proof of \eqref{eq:seed_size_lb_variance} provides us with enough information to precisely characterize the structure of those ACFU hash functions which attain equality.

\begin{cor}\label{cor:eq_bed_variance}
    If $f:\mc X\times\mc S\to\mc A$ is a nontrivial $\varepsilon$-ACFU hash function satisfying equality in \eqref{eq:seed_size_lb_variance}, then the dual of its mosaic $M(f)$ is a mosaic of quasi-symmetric BIBD$(v,k,\lambda)$, where
    \[
        v=\abs{\mc S},\quad k=\frac{\abs{\mc S}}{\abs{\mc A}},\quad\lambda=\frac{\abs{\mc X}(\abs{\mc S}-\abs{\mc A})}{\abs{\mc A}^2(\abs{\mc S}-1)},
    \]
    and the intersection numbers are $0$ and $\varepsilon\abs{\mc S}/\abs{\mc A}$. 
    
    Conversely, if $M$ is the dual of a mosaic of nontrivial quasi-symmetric BIBD$(v,k,\lambda)$ with intersection numbers $0$ and $\mu$, then $f_M$ is a nontrivial $\varepsilon$-ACFU hash function with
    \[
    	\varepsilon=\frac{\mu}{k}.
    \]
\end{cor}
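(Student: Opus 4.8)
The plan is to extract from the proof of Theorem~\ref{thm:seed_size_lb} precisely which of its inequalities become equalities when \eqref{eq:seed_size_lb_variance} is tight, to translate those into incidence-geometric statements about the dual of the mosaic $M(f)$, and for the converse simply to unpack the definitions of mosaic, dual and quasi-symmetric BIBD.

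For the forward direction, assume $f$ attains equality in \eqref{eq:seed_size_lb_variance}. The first --- and, I expect, the only delicate --- step is to ``propagate'' the equality from the single witness value $\alpha$ used in the proof of Theorem~\ref{thm:seed_size_lb} to all of $\mc A$. That proof bounds $\abs{\mc S}$ by choosing, for each $s$, some $\alpha$ with $\abs{B_{s,\alpha}}\geq\abs{\mc X}/\abs{\mc A}$ and inserting it into \eqref{eq:lb_raw}, whose right-hand side is nondecreasing in $\abs{B_{s,\alpha}}$ and whose denominator is nonzero. Hence tightness of \eqref{eq:seed_size_lb_variance} forces, for every $s$, that $\max_\alpha\abs{B_{s,\alpha}}=\abs{\mc X}/\abs{\mc A}$; since $\sum_\alpha\abs{B_{s,\alpha}}=\abs{\mc X}$, this gives $\abs{B_{s,\alpha}}=\abs{\mc X}/\abs{\mc A}$ for \emph{all} $s$ and $\alpha$. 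Re-running the variance computation of Theorem~\ref{thm:seed_size_lb} with this uniform block size, every inequality there becomes an equality for every $s$ and $\alpha$: the estimate $0\le\sum_{\sigma\neq s}(\lambda_\sigma-\overline\lambda)^2$ collapses, so all $\lambda_\sigma=\abs{B_{s,\alpha}\cap B_{\sigma,\alpha}}$ equal their common mean $\overline\lambda$, which upon substituting $\abs{B_{s,\alpha}}=\abs{\mc X}/\abs{\mc A}$ equals the asserted $\lambda$. Thus in the dual $\tilde D_\alpha$ of each member, any two distinct points lie in exactly $\lambda$ common blocks, every block has size $k=\abs{\mc S}/\abs{\mc A}$ by (ACFU1), and so the dual of $M(f)$ is a mosaic of BIBD$(v,k,\lambda)$ with $v=\abs{\mc S}$.

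For the quasi-symmetry, I would use the other equality: tightness of the (ACFU2)-estimate in Theorem~\ref{thm:seed_size_lb} means that for all $\alpha$, all $s$, and all distinct $x,x'\in B_{s,\alpha}$ one has $\abs{\{\sigma:f(x,\sigma)=f(x',\sigma)=\alpha\}}=\varepsilon\abs{\mc S}/\abs{\mc A}$. Consequently, for distinct $x,x'\in\mc X$: either $B_{x,\alpha}\cap B_{x',\alpha}=\emptyset$, or it contains some $s$, whence $x,x'\in B_{s,\alpha}$ and the displayed identity yields $\abs{B_{x,\alpha}\cap B_{x',\alpha}}=\varepsilon\abs{\mc S}/\abs{\mc A}$. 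So every $\tilde D_\alpha$ has all block intersections in $\{0,\varepsilon\abs{\mc S}/\abs{\mc A}\}$ --- two distinct numbers since $\varepsilon>0$ by Lemma~\ref{lem:epsilon_lb} --- i.e.\ it is quasi-symmetric with the stated intersection numbers. (We may assume $\varepsilon<1$; for $\varepsilon=1$ the bound \eqref{eq:seed_size_lb_variance} reads $\abs{\mc S}\geq\abs{\mc A}$, and equality forces a degenerate situation in which the asserted $\lambda$ vanishes.)

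For the converse, write $M=\tilde N$ with $N=(D_\alpha)_{\alpha\in\mc A}$ a mosaic of nontrivial quasi-symmetric BIBD$(v,k,\lambda)$ with intersection numbers $0,\mu$, say on point set $P$ with block index set $Q$; then $f_M:Q\times P\to\mc A$ and $f_M(x,s)=\alpha$ exactly when $s$ lies in the block of $x$ in $D_\alpha$. First settle the parameters: fixing $p\in P$ and splitting $Q$ according to which member makes $(p,q)$ incident gives $\abs{Q}=\abs{\mc A}r$ with $r$ the common replication number of the $D_\alpha$, and combining this with \eqref{eq:1-design} (which here reads $\abs{Q}k=vr$) gives $k=v/\abs{\mc A}$, i.e.\ $\abs{P}/\abs{\mc A}=k$. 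Then (ACFU1) holds because $\abs{\{s:f_M(x,s)=\alpha\}}$ is the block size $k=\abs{P}/\abs{\mc A}$; and (ACFU2) holds because $\abs{\{s:f_M(x,s)=f_M(x',s)=\alpha\}}$ is the size of the intersection of two distinct blocks of $D_\alpha$, hence at most $\mu=(\mu/k)k=\varepsilon\abs{P}/\abs{\mc A}$ with $\varepsilon=\mu/k$ (and $\mu$ is attained, so this $\varepsilon$ is exact). Nontriviality of $f_M$ follows from $\abs{\mc A}=v/k\geq2$ (as $k<v$) and $\abs{Q}=\abs{\mc A}r>\abs{\mc A}$, where $r\geq2$ is read off from \eqref{eq:lambda_v_r_k} and $k<v$. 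The one real obstacle in all of this is the propagation step of the forward direction; once the dual blocks are known to have the common size $\abs{\mc X}/\abs{\mc A}$, the rest is bookkeeping with the equality cases already present in the proof of Theorem~\ref{thm:seed_size_lb}.
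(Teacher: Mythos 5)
Your argument follows the paper's proof of this corollary almost step for step: extract the equality conditions from the variance computation in Theorem~\ref{thm:seed_size_lb}, translate them into statements about the dual blocks $B_{x,\alpha}$, and unpack definitions for the converse. The propagation step you single out as delicate is handled correctly (and a little more explicitly than in the paper, which simply asserts that equality forces all inequalities to be tight for every $s$ and $\alpha$), and your converse is sound, including the derivation $k=v/\abs{\mc A}$ from the mosaic property together with \eqref{eq:1-design}.

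The one genuine gap is in the quasi-symmetry claim. You show that all block intersections in $\tilde D_\alpha$ lie in $\{0,\varepsilon\abs{\mc S}/\abs{\mc A}\}$ and then say these are ``two distinct numbers since $\varepsilon>0$''; but that only shows the two candidate values differ as numbers, not that both actually occur as intersection sizes. If every pair of distinct blocks met in exactly $\varepsilon\abs{\mc S}/\abs{\mc A}$ points, then $\tilde D_\alpha$ would be a \emph{symmetric} design rather than a quasi-symmetric one, and the statement (in particular the converse's reliance on the intersection number $0$) would not match. The paper closes this case by noting that $\tilde M(f)$ is a nontrivial mosaic of BIBDs, so Corollary~\ref{cor:OCFU_S_lb} gives $b\geq v+r-1>v$ for its members, whereas a symmetric design has $b=v$; hence disjoint blocks must exist and the designs are truly quasi-symmetric. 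You should add this step. Two smaller remarks: your converse establishes the literal statement but does not verify that $f_M$ actually attains equality in \eqref{eq:seed_size_lb_variance}, which the paper checks via the identity \eqref{eq:QS_des_inters} and which is clearly part of the intended characterization; and your parenthetical disposing of the degenerate case $\varepsilon=1$ is reasonable but is not something the paper needs, since there the dual blocks would have size $1$ and the members of $\tilde M(f)$ could not be nontrivial BIBDs.
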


\begin{proof}
    Let $f:\mc X\times\mc S\to\mc A$ be an $\varepsilon$-ACFU hash function. Equality in \eqref{eq:seed_size_lb_variance} holds if and only if in the proof of \eqref{eq:seed_size_lb_variance}, all inequalities are equalities irrespective of the choice of $s$ and $\alpha$. Hence, equality holds if and only if the following conditions are satisfied:
    \begin{enumerate}
        \item $\abs{B_{s,\alpha}}$ is constant and equal to $\abs{\mc X}/\abs{\mc A}$;
        \item for all distinct $s,s'\in\sigma$ and all $\alpha$,
        \[
            \abs{B_{s,\alpha}\cap B_{s',\alpha}}=\overline\lambda
        \]
        for $\overline\lambda$ as in the proof of Theorem \ref{thm:seed_size_lb}; another way of stating this is that any two distinct $s$ and $s'$ are contained in precisely $\overline\lambda$ different blocks of the form $B_{x,\alpha}$;
        \item for any distinct $x,x'$ for which the corresponding blocks $B_{x,\alpha}$ and $B_{x',\alpha}$ have nonempty intersection,
        \[
            \abs{B_{x,\alpha}\cap B_{x',\alpha}}=\frac{\varepsilon\abs{\mc S}}{\abs{\mc A}}.
        \]
    \end{enumerate}
    The first two conditions imply that $\overline\lambda$ equals the $\lambda$ from the statement of the corollary. Together with the constant size of the blocks $B_{x,\alpha}$, which is guaranteed by property (ACFU1), condition 2) says that for each $\alpha$, the incidence relation $D_\alpha$ on the point set $\mc S$ with block set $\{B_{x,\alpha}:x\in\mc X\}$ is a BIBD$(v,k,\lambda)$ with parameters as claimed. The third of the above conditions for equality implies that any two distinct blocks $B_{x,\alpha}$ and $B_{x',\alpha}$ intersect in either $0$ or $\varepsilon\abs{\mc S}/\abs{\mc A}$ points. If the intersection size were equal to $\varepsilon\abs{\mc S}/\abs{\mc A}$ for all distinct blocks, then the $D_\alpha$ would be symmetric, in particular, $b=v$. This is impossible by Corollary \ref{cor:OCFU_S_lb} below. Hence, $D_\alpha$ must be quasi-symmetric.

    Now assume we are given a mosaic $\tilde M=(\tilde D_\alpha)_{\alpha\in\mc A}$ of quasi-symmetric BIBD$(v,k,\lambda)$ on the point set $\mc S$ and with block index set $\mc X$, with intersection numbers $0$ and $\mu>0$. It is well-known that 
    \begin{equation}\label{eq:QS_des_inters}
    	\mu=\frac{(k-1)(\lambda-1)}{r-1}+1
    \end{equation}
    (e.g., \cite[Proposition 3.17]{ShrSin_QS_Des}). Let $M$ be the dual of $\tilde M$ and $f_M$ the function induced by $M$. Then, using the notation \eqref{eq:dual_blocks} with $f=f_M$, it holds for any $\alpha\in\mc A$ and distinct points $x,x'\in\mc X$ that
    \[
    	\abs{\{s\in\mc S:f_M(x,s)=f_M(x',s)=\alpha\}}=\abs{B_{x,\alpha}\cap B_{x',\alpha}}\leq\mu.
    \]
    Defining $\varepsilon$ as in the statement then makes $f_M$ an $\varepsilon$-ACFU hash function. Inserting this in the right-hand side of \eqref{eq:seed_size_lb_variance} and solving for $\abs{\mc S}=v$ using \eqref{eq:QS_des_inters} shows that equality in \eqref{eq:seed_size_lb_variance} is satisfied. Obviously, $f_M$ is nontrivial since $\abs{\mc A}=v/k\geq 2$. 
\end{proof}

We have not been able to characterize those ACFU hash functions $f$ satisfying equality in \eqref{eq:seed_size_lb_simple}. Combinatorially, equality is equivalent to the statement that $\abs{B_{x,\alpha}\cap B_{x',\alpha}}\leq 1$ for all $x\neq x'$. This immediately implies $\abs{B_{s,\alpha}\cap B_{s',\alpha}}\leq 1$ for $s\neq s'$, so if the $B_{s,\alpha}$ have constant size, then the dual $\tilde M(f)$ of $M(f)$ also gives rise to an ACFU hash function satisfying equality in \eqref{eq:seed_size_lb_simple}. If the members $\tilde D_\alpha$ of $\tilde M(f)$ in addition are BIBDs (i.e., $\abs{B_{x,\alpha}\cap B_{x',\alpha}}=1$ for all distinct $x,x'$), then they are quasi-symmetric, and in this case $f$ satisfies both bounds of Theorem \ref{thm:seed_size_lb}, see Example \ref{ex:aff_OCFU}. So to find examples where only \eqref{eq:seed_size_lb_simple} is satisfied, other types of designs need to be considered. ACFU hash functions whose mosaics consist of transversal designs or nets are given in Example \ref{ex:TD}. 

When does each of the two bounds given in Theorem \ref{thm:seed_size_lb} apply? For the purpose of this discussion, given $\abs{\mc X}$ and $\abs{\mc A}$, let us call $\varepsilon$ \textit{feasible} if it satisfies the inequality of Lemma \ref{lem:epsilon_lb} and if $\varepsilon\leq 1$ (which we can assume without loss of generality). Let us also say that the inequality \eqref{eq:seed_size_lb_variance} \textit{applies} to $\varepsilon$ if the right-hand side of \eqref{eq:seed_size_lb_variance} is larger than that of \eqref{eq:seed_size_lb_simple}; otherwise, we say that \eqref{eq:seed_size_lb_simple} applies. 

\begin{lem}\label{lem:ACFU_lb_distinction}
	Let $\abs{\mc X}>\abs{\mc A}>1$. The bound \eqref{eq:seed_size_lb_variance} applies to those feasible $\varepsilon$ satisfying
    \[
    	\frac{\abs{\mc X}-\abs{\mc A}}{\abs{\mc A}(\abs{\mc X}-1)}
    	\leq\varepsilon
    	\leq\frac{\abs{\mc X}-\abs{\mc A}^2}{\abs{\mc X}-\abs{\mc A}}.
    \]
	This set is nonempty if and only if
	\begin{equation}\label{eq:lb_comparison_real}
        \abs{\mc X}\geq\frac{\abs{\mc A}}{2}\left(\abs{\mc A}+\sqrt{(\abs{\mc A}+3)(\abs{\mc A}-1)}+1\right).
    \end{equation}
\end{lem}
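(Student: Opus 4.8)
The plan is to convert ``\eqref{eq:seed_size_lb_variance} applies to $\varepsilon$'' into an explicit range for $\varepsilon$, and then to ask when that range meets the feasible range $\varepsilon_0\leq\varepsilon\leq1$, where $\varepsilon_0=(\abs{\mc X}-\abs{\mc A})/(\abs{\mc A}(\abs{\mc X}-1))$ is the optimal value of Lemma \ref{lem:epsilon_lb}.

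First I would record that the denominator $D(\varepsilon):=\varepsilon\abs{\mc A}(\abs{\mc X}-\abs{\mc A})+\abs{\mc A}^{2}-\abs{\mc X}$ of the fraction in \eqref{eq:seed_size_lb_variance} is strictly positive for every feasible $\varepsilon$: since $\abs{\mc X}>\abs{\mc A}$ the map $\varepsilon\mapsto D(\varepsilon)$ is strictly increasing, and at $\varepsilon_0$ one computes $D(\varepsilon_0)=\abs{\mc X}(\abs{\mc A}-1)^{2}/(\abs{\mc X}-1)>0$, hence $D(\varepsilon)\geq D(\varepsilon_0)>0$ throughout. Consequently the inequality ``right-hand side of \eqref{eq:seed_size_lb_variance} exceeds right-hand side of \eqref{eq:seed_size_lb_simple}'', that is $1+\abs{\mc X}(\abs{\mc A}-1)^{2}/D(\varepsilon)>\abs{\mc A}/\varepsilon$, can be multiplied through by the positive quantities $\varepsilon$ and $D(\varepsilon)$ without reversing it; after expanding and grouping by powers of $\varepsilon$ it becomes a quadratic inequality in $\varepsilon$ with positive leading coefficient $\abs{\mc A}(\abs{\mc X}-\abs{\mc A})$. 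The key simplification is that $\varepsilon=1$ is a root of the associated quadratic — matching the fact that both bounds of Theorem \ref{thm:seed_size_lb} equal $\abs{\mc A}$ at $\varepsilon=1$ — so the quadratic factors as a positive multiple of $(\varepsilon-1)(\varepsilon-c)$, where $c=(\abs{\mc X}-\abs{\mc A}^{2})/(\abs{\mc X}-\abs{\mc A})$ is read off from the product of the roots. Since $\abs{\mc A}>1$ forces $c<1$, the condition $(\varepsilon-1)(\varepsilon-c)>0$ together with feasibility $\varepsilon\leq1$ is equivalent to $\varepsilon<c$; intersecting with $\varepsilon\geq\varepsilon_0$ gives exactly the interval in the statement (up to the endpoint $\varepsilon=c$, at which the two bounds are equal).

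For nonemptiness I would decide when $\varepsilon_0\leq c$. If $\abs{\mc X}<\abs{\mc A}^{2}$ then $c<0<\varepsilon_0$ and the interval is empty, consistent with \eqref{eq:lb_comparison_real}; otherwise both $\abs{\mc A}(\abs{\mc X}-1)$ and $\abs{\mc X}-\abs{\mc A}$ are positive, so cross-multiplying $\varepsilon_0\leq c$ gives $(\abs{\mc X}-\abs{\mc A})^{2}\leq\abs{\mc A}(\abs{\mc X}-1)(\abs{\mc X}-\abs{\mc A}^{2})$. Expanding both sides and dividing by $\abs{\mc A}-1>0$, this reduces to $\abs{\mc X}^{2}-\abs{\mc A}(\abs{\mc A}+1)\abs{\mc X}+\abs{\mc A}^{2}\geq0$. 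The quadratic $t^{2}-\abs{\mc A}(\abs{\mc A}+1)t+\abs{\mc A}^{2}$ has roots $\frac{\abs{\mc A}}{2}(\abs{\mc A}+1\pm\sqrt{(\abs{\mc A}+3)(\abs{\mc A}-1)})$, using the identity $(\abs{\mc A}+1)^{2}-4=(\abs{\mc A}+3)(\abs{\mc A}-1)$. Since $\sqrt{(\abs{\mc A}+3)(\abs{\mc A}-1)}>\abs{\mc A}$ for $\abs{\mc A}\geq2$, the smaller root lies below $\abs{\mc A}$ and is irrelevant under the standing hypothesis $\abs{\mc X}>\abs{\mc A}$; moreover $(\abs{\mc A}+3)(\abs{\mc A}-1)$ is strictly between $\abs{\mc A}^{2}$ and $(\abs{\mc A}+1)^{2}$, hence never a perfect square, so for integral $\abs{\mc X}$ the inequality holds iff $\abs{\mc X}$ is at least the larger root — which is precisely \eqref{eq:lb_comparison_real}.

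I do not expect a genuine obstacle: the proof is sign-bookkeeping with inequalities, made painless by two observations — recognising $\varepsilon=1$ as a common root (so the first quadratic in $\varepsilon$ collapses to a linear condition on $\varepsilon$) and discarding the spurious small root of the second quadratic (in $\abs{\mc X}$) via $\abs{\mc X}>\abs{\mc A}$. The one place to be careful is verifying $D(\varepsilon)>0$ before clearing denominators, lest the direction of the inequality flip.
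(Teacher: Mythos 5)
Your proposal is correct and follows essentially the same route as the paper: compare the two right-hand sides, reduce to a quadratic condition in $\varepsilon$ whose relevant root is $(\abs{\mc X}-\abs{\mc A}^2)/(\abs{\mc X}-\abs{\mc A})$, and intersect with the feasible range $[\varepsilon_0,1]$ to obtain \eqref{eq:lb_comparison_real}. The paper states this computation in two sentences without detail; you supply the omitted algebra (including the positivity of the denominator $D(\varepsilon)$ and the factorization via the root $\varepsilon=1$), all of which checks out.
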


\begin{proof}
	The right-hand side of \eqref{eq:seed_size_lb_simple} is larger than that of  \eqref{eq:seed_size_lb_variance} if and only if $\varepsilon$ satisfies
    \[
        \frac{\abs{\mc X}-\abs{\mc A}^2}{\abs{\mc X}-\abs{\mc A}}\leq\varepsilon\leq 1.
    \]
    Comparing this with the optimal $\varepsilon$ from Lemma \ref{lem:epsilon_lb}, we obtain the criterion \eqref{eq:lb_comparison_real}.
\end{proof}

The case which interests us most is where $\varepsilon\approx 1/\abs{\mc A}$ (see Section \ref{sect:motivation}). Then if \eqref{eq:seed_size_lb_simple} applies to $\varepsilon$, the cardinality of $\mc S$ cannot be much smaller than $\abs{\mc A}^2$. But the right-hand side of \eqref{eq:lb_comparison_real} is approximately equal to $\abs{\mc A}^2$ for large $\abs{\mc A}$, so in this case, $\abs{\mc S}$ will usually be larger than $\abs{\mc X}$. On the other hand, the lower bound \eqref{eq:seed_size_lb_variance} is at most $\abs{\mc X}$, and Example \ref{ex:dual_aff} below shows that $\abs{\mc S}<\abs{\mc X}$ is indeed possible since it satisfies equality in \eqref{eq:seed_size_lb_variance} with $\varepsilon=1/\abs{\mc A}$. However, for optimal $\varepsilon$, the situation is special and the seed set is strictly larger than the point set, as we will see in the next subsection.

\subsection{Optimal $\varepsilon$}

We now consider the case where $\varepsilon$ is the optimal one from Lemma \ref{lem:epsilon_lb}, i.e., we deal with the case of OCFU hash functions. Stinson's result, Lemma \ref{lem:Stinson_lemma}, shows that an OCFU hash function $f$ has a corresponding $\Sigma M(f)$ which is a BIBD. The next result characterizes OCFU hash functions by the members of $M(f)$.

\begin{thm}
	If $f:\mc X\times\mc S\to\mc A$ is an OCFU hash function, then $M(f)$ is a mosaic of BIBDs $(v,k,\lambda)$ with
	\[
		v=\abs{\mc X},\quad k=\frac{\abs{\mc X}}{\abs{\mc A}},\quad\lambda=\frac{\varepsilon\abs{\mc S}}{\abs{\mc A}},
		\quad b=\abs{\mc S},\quad r=\frac{\abs{\mc S}}{\abs{\mc A}}.
	\]
	Conversely, if $M=(D_\alpha)_{\alpha\in\mc A}$ is a mosaic of BIBD$(v,k,\lambda)$ on $\mc X$ with block index set $\mc S$, then $f_M$ is an OCFU hash function, and $\abs{\mc X},\abs{\mc S},\abs{\mc A}$ satisfy the above relations.
\end{thm}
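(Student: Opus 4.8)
We prove both implications by identifying the design parameters through elementary incidence counting, using that optimality of $\varepsilon$ turns the relevant counting inequalities into equalities; throughout we assume $\abs{\mc X}>\abs{\mc A}$, so that the optimal $\varepsilon$ is positive.

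For the forward direction, let $f$ be an OCFU hash function. By part~1 of Lemma~\ref{lem:hash_fct_rels}, $f$ is an $\varepsilon$-AU hash function, and $\varepsilon$ is the optimal one, so by part~1 of Lemma~\ref{lem:Stinson_lemma} the incidence structure $D:=\Sigma M(f)$ is a resolvable BIBD on $\mc X$. Its parameters can be read off directly from $f$: the block index set of $D$ is $\mc S\times\mc A$, so $b_D=\abs{\mc S}\abs{\mc A}$; for each $s$ a point $x$ is incident with $(s,\alpha)$ for the single value $\alpha=f(x,s)$, so the replication number of $D$ is $r_D=\abs{\mc S}$; hence by \eqref{eq:1-design} the (constant) block size of $D$ is $k_D=v_Dr_D/b_D=\abs{\mc X}/\abs{\mc A}$, and by \eqref{eq:lambda_v_r_k}, inserting the explicit value of the optimal $\varepsilon$, the index of $D$ is $\lambda_D=r_D(k_D-1)/(\abs{\mc X}-1)=\varepsilon\abs{\mc S}$. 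Now the blocks of $\Sigma M(f)$ are exactly the sets $B_{s,\alpha}$ from \eqref{eq:dual_blocks}, so $\abs{B_{s,\alpha}}=k_D$ for all $s$ and $\alpha$; that is, every member $D_\alpha$ of $M(f)$ has constant block size $k=\abs{\mc X}/\abs{\mc A}$. Moreover, for distinct $x,x'\in\mc X$ the number of blocks of $\Sigma M(f)$ containing both equals $\sum_{\alpha\in\mc A}\abs{B_{x,\alpha}\cap B_{x',\alpha}}$, which on one hand is $\lambda_D=\varepsilon\abs{\mc S}$ and on the other hand is a sum of $\abs{\mc A}$ terms each at most $\varepsilon\abs{\mc S}/\abs{\mc A}$ by (ACFU2); so every term equals $\varepsilon\abs{\mc S}/\abs{\mc A}$, i.e.\ in each $D_\alpha$ any two distinct points lie in exactly $\lambda=\varepsilon\abs{\mc S}/\abs{\mc A}$ common blocks. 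Hence each $D_\alpha$ is a BIBD$(v,k,\lambda)$ with $v=\abs{\mc X}$, $k=\abs{\mc X}/\abs{\mc A}$, $\lambda=\varepsilon\abs{\mc S}/\abs{\mc A}$, $b=\abs{\mc S}$, and $r=bk/v=\abs{\mc S}/\abs{\mc A}$ (the last being also immediate from (ACFU1)); these parameters do not depend on $\alpha$, and $M(f)$ is a mosaic because $f$ is a function, so $M(f)$ is a mosaic of BIBD$(v,k,\lambda)$. (One can avoid Lemma~\ref{lem:Stinson_lemma} by instead running a variance argument on the block sizes $\abs{B_{s,\alpha}}$ with $\alpha$ fixed, in the spirit of the proof of Theorem~\ref{thm:seed_size_lb}: optimality of $\varepsilon$ makes that estimate tight, which again forces constant block size and the exact per-value collision count.)

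For the converse, let $M=(D_\alpha)_{\alpha\in\mc A}$ be a mosaic of BIBD$(v,k,\lambda)$ on $\mc X$ with block index set $\mc S$, and let $r$ be the common replication number; then $v=\abs{\mc X}$ and $b=\abs{\mc S}$. Because $M$ is a mosaic, fixing $x\in\mc X$ and summing over $\alpha$ the number of block indices incident with $x$ in $D_\alpha$ (which is $r$ for every $\alpha$) counts each $s\in\mc S$ exactly once, whence $\abs{\mc A}r=\abs{\mc S}$, i.e.\ $r=\abs{\mc S}/\abs{\mc A}$. Then \eqref{eq:1-design} gives $k=vr/b=\abs{\mc X}/\abs{\mc A}$, and \eqref{eq:lambda_v_r_k} gives $\lambda=r(k-1)/(v-1)=\abs{\mc S}(\abs{\mc X}-\abs{\mc A})/(\abs{\mc A}^2(\abs{\mc X}-1))$, which equals $\varepsilon\abs{\mc S}/\abs{\mc A}$ for $\varepsilon$ the optimal value in \eqref{eq:eps_lb}. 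Finally, $f_M$ is OCFU: for each $x$ and $\alpha$, $\abs{\{s:f_M(x,s)=\alpha\}}$ is the number of block indices incident with $x$ in $D_\alpha$, namely $r=\abs{\mc S}/\abs{\mc A}$, which is (ACFU1); and for distinct $x,x'$ and any $\alpha$, $\abs{\{s:f_M(x,s)=f_M(x',s)=\alpha\}}$ is the number of block indices incident with both $x$ and $x'$ in $D_\alpha$, namely $\lambda=\varepsilon\abs{\mc S}/\abs{\mc A}$, which is (ACFU2) for this $\varepsilon$. Since $\varepsilon$ is the optimal value, $f_M$ is an OCFU hash function, and the asserted relations among $\abs{\mc X},\abs{\mc S},\abs{\mc A}$ hold by construction.

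The proof is largely substitution into \eqref{eq:1-design} and \eqref{eq:lambda_v_r_k}; the one step carrying real content is, in the forward direction, upgrading the inequality (ACFU2) to the exact relation $\abs{B_{x,\alpha}\cap B_{x',\alpha}}=\varepsilon\abs{\mc S}/\abs{\mc A}$ (and, analogously, securing constant block size). Both rest on the same fact: a nonnegative quantity bounded above by $\abs{\mc A}$ copies of $\varepsilon\abs{\mc S}/\abs{\mc A}$ that is simultaneously forced to equal $\varepsilon\abs{\mc S}$ must attain each of the $\abs{\mc A}$ individual bounds with equality. Recognizing that it is precisely the optimality of $\varepsilon$ which produces the ambient equality $\sum_{\alpha}\abs{B_{x,\alpha}\cap B_{x',\alpha}}=\varepsilon\abs{\mc S}$ (via the BIBD structure of $\Sigma M(f)$) is the heart of the argument; the remainder is routine.
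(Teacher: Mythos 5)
Your proof is correct, and both directions check out; the converse is essentially the paper's (which leaves it as ``straightforward''), while the forward direction is organized differently. The paper, after invoking Stinson's result only to get the constant block size $k=\abs{\mc X}/\abs{\mc A}$, fixes $\alpha$ and $x$ and counts pairs $(x',s)$ with $s\in B_{x,\alpha}\cap B_{x',\alpha}$: the average intersection size over the $\abs{\mc X}-1$ choices of $x'$ comes out to $r(k-1)/(\abs{\mc X}-1)$, which for optimal $\varepsilon$ equals the (ACFU2) upper bound $\varepsilon\abs{\mc S}/\abs{\mc A}$, forcing every intersection to attain it. You instead extract the full parameter set of $\Sigma M(f)$ as a BIBD (in particular its index $\lambda_D=\varepsilon\abs{\mc S}$), fix a pair $x\neq x'$, and split $\lambda_D$ into the $\abs{\mc A}$ summands $\abs{B_{x,\alpha}\cap B_{x',\alpha}}$, each capped at $\varepsilon\abs{\mc S}/\abs{\mc A}$ by (ACFU2). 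The two arguments are the same ``sum meets its termwise upper bound, so every term is extremal'' principle applied along different axes (the paper sums over $x'$ for fixed $\alpha$; you sum over $\alpha$ for fixed $x,x'$). Your version is arguably a bit cleaner: it avoids the paper's floor-function bookkeeping with $L=\lfloor\varepsilon\abs{\mc S}/\abs{\mc A}\rfloor$, and it makes visible exactly where optimality of $\varepsilon$ enters (it is what makes $\lambda_D$ equal to $\abs{\mc A}$ copies of the (ACFU2) bound). The only caveat, which you already handle by assuming $\abs{\mc X}>\abs{\mc A}$, is that nontriviality is needed for $\lambda>0$ so that the members really are BIBDs.
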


\begin{proof}
	Let $f:\mc X\times\mc S\to\mc A$ be an OCFU hash function. Since $\Sigma M(f)$ is a BIBD, all blocks $B_{s,\alpha}$ have fixed size $k$. Property (ACFU1) also ensures that the dual blocks $B_{x,\alpha}$ have constant size $\abs{\mc S}/\abs{\mc A}$, which we denote by $r$.
	
	We need to check that two points meet in precisely $\lambda$ blocks for suitable $\lambda$. To do this, fix $\alpha\in\mc A$ and $x\in\mc X$ and define the weights
	\[
		\kappa_i=\abs{\{x'\in\mc X:x'\neq x,\abs{B_{x,\alpha}\cap B_{x',\alpha}}=i\}}.
	\]
    Set 
    \[
        L
        =\lfloor\varepsilon\abs{\mc S}/\abs{\mc A}\rfloor
        =\left\lfloor\frac{r(k-1)}{\abs{\mc X}-1}\right\rfloor,
    \]
    the largest possible value of $i$. Note that
    \[
    	\sum_{i=0}^L\kappa_i=\abs{\mc X}-1.
    \]
    By counting pairs $(x',s)$ with $x'\neq x$ satisfying $s\in B_{x,\alpha}\cap B_{x',\alpha}$, we obtain
    \begin{align*}
    	\frac{1}{\abs{\mc X}-1}\sum_{i=0}^Li\kappa_i
    	&=\frac{r(k-1)}{\abs{\mc X}-1}.
    \end{align*}
    This implies that $\kappa_L=\abs{\mc X}-1$ and $\kappa_i=0$ for $i<L$. It follows that $\abs{B_{x',\alpha}\cap B_{x,\alpha}}=L$ for all $\alpha$ and $x\neq x'$. Hence $D_\alpha$ is a BIBD$(v,k,\lambda)$. The fact that 
    \[
    	\lambda=L=\frac{r(k-1)}{\abs{\mc X}-1}=\frac{\varepsilon\abs{\mc S}}{\abs{\mc A}}
    \]
    follows from \eqref{eq:lambda_v_r_k}.

    In the other direction, given a mosaic $M$ of BIBDs, it is straightforward to check that $f_M$ is an OCFU hash function and that the parameters are related as claimed in the statement.
\end{proof}

\begin{cor}\label{cor:OCFU_S_lb}
	For an OCFU hash function $f:\mc X\times\mc S\to\mc A$,
	\begin{equation}\label{eq:seed_size_lb_OCFU}
		\abs{\mc S}\geq\frac{\abs{\mc A}(\abs{\mc X}-1)}{\abs{\mc A}-1}.
	\end{equation}
	Equivalently, in a mosaic of BIBD$(v,k,\lambda)$, it holds that $b\geq v+r-1$.
\end{cor}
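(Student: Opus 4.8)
The two assertions in the statement are algebraic reformulations of one another: with $b=\abs{\mc S}$, $v=\abs{\mc X}$ and $r=\abs{\mc S}/\abs{\mc A}$, multiplying $b\ge v+r-1$ by $\abs{\mc A}$ and rearranging yields $\abs{\mc S}(\abs{\mc A}-1)\ge\abs{\mc A}(\abs{\mc X}-1)$, which is \eqref{eq:seed_size_lb_OCFU}. So the plan is just to prove $b\ge v+r-1$ for the members of $M(f)$.

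This will be a short divisibility argument. By the preceding theorem, $M(f)$ is a mosaic of BIBD$(v,k,\lambda)$ with $v=\abs{\mc X}$, $k=\abs{\mc X}/\abs{\mc A}$, $r=\abs{\mc S}/\abs{\mc A}$ and $b=\abs{\mc S}$; in particular $v=\abs{\mc A}\,k$ and $b=\abs{\mc A}\,r$, and $\lambda\ge1$ and $k\ge2$ because the members are (nontrivial) BIBDs. From $v=\abs{\mc A}\,k$ I would write $v-1=\abs{\mc A}(k-1)+(\abs{\mc A}-1)$, so that
\[
    \lambda(v-1)=\abs{\mc A}\,\lambda(k-1)+\lambda(\abs{\mc A}-1).
\]
By \eqref{eq:lambda_v_r_k} the left-hand side equals $r(k-1)$, hence is a multiple of $k-1$; so is the first summand on the right; therefore $k-1$ divides $\lambda(\abs{\mc A}-1)$. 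Since $\lambda(\abs{\mc A}-1)$ is a positive integer, being a multiple of $k-1$ it is at least $k-1$, i.e.
\[
    \lambda(\abs{\mc A}-1)\ge k-1.
\]

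It then remains to unwind this. Multiplying by $(v-1)/(k-1)>0$ and using $r=\lambda(v-1)/(k-1)$ gives $r(\abs{\mc A}-1)\ge v-1$, and hence $b=\abs{\mc A}\,r=r+(\abs{\mc A}-1)\,r\ge r+(v-1)$, which is $b\ge v+r-1$, as required.

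I do not expect any genuine obstacle here; the argument is essentially an integrality observation and uses nothing about the mosaic beyond the preceding theorem --- indeed it shows, more generally, that every nontrivial BIBD$(v,k,\lambda)$ with $k\mid v$ satisfies $b\ge v+r-1$. The only points to watch are the standing assumptions that the members are nontrivial (so $k\ge2$ and division by $k-1$ is legitimate) and that $\abs{\mc A}\ge2$ (so $\abs{\mc A}/(\abs{\mc A}-1)$ is defined and $\lambda(\abs{\mc A}-1)>0$), both of which hold for a nontrivial OCFU hash function.
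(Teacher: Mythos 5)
Your proof is correct. Structurally it follows the same reduction as the paper: translate \eqref{eq:seed_size_lb_OCFU} into the statement $b\geq v+r-1$ for the members of $M(f)$, which are BIBD$(v,k,\lambda)$ with $k\mid v$ by the preceding theorem. The difference is in how that inequality is established. The paper simply invokes the result of Roy and Mikhail that any BIBD whose block size divides the number of points satisfies $b\geq v+r-1$, whereas you re-derive it from scratch via the divisibility observation that $k-1$ divides $\lambda(v-1)=r(k-1)$ and hence divides $\lambda(\abs{\mc A}-1)$, forcing $\lambda(\abs{\mc A}-1)\geq k-1$ and then $r(\abs{\mc A}-1)\geq v-1$. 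This is essentially the classical proof of the Roy--Mikhail inequality, so what you gain is self-containedness: the corollary no longer depends on an external citation, and your argument makes explicit exactly which hypotheses are used ($k\geq2$, $\abs{\mc A}\geq2$, integrality of $r$ and $\lambda$ — all supplied by nontriviality and the preceding theorem). The bookkeeping at both ends (the algebraic equivalence of the two formulations, and the final step $b=\abs{\mc A}r=r+(\abs{\mc A}-1)r\geq r+v-1$) checks out.
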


\begin{proof}
	In a mosaic $M$ of BIBD$(v,k,\lambda)$, the block size $k$ of each member $D_\alpha$ of $M$ divides the size $v$ of the point set. Since each $D_\alpha$ is a BIBD, the result proved independently by Roy \cite{Roy_resolvable} and Mikhail \cite{Mikhail_resolvable} applies, stating that $b\geq v+r-1$ in this case. With $a=v/k$, this is equivalent to $(a-1)b/a=(a-1)r\geq v-1$ (recall \eqref{eq:1-design}), which can be transformed into inequality \eqref{eq:seed_size_lb_OCFU} for $f_M$.
\end{proof}

The bound given in the corollary is tight, as will be seen in Example \ref{ex:aff_OCFU} below. However, it can only be attained in the regime where $\abs{\mc X}\geq\abs{\mc A}^2$, since the right-hand side of \eqref{eq:seed_size_lb_simple} is strictly larger than the right-hand side of \eqref{eq:seed_size_lb_OCFU} if $\abs{\mc X}<\abs{\mc A}^2$. For the latter case, an OCFU function is given in Example \ref{ex:Denniston}.

We also note that the corollary implies that the block index set of a nontrivial mosaic of BIBDs is strictly larger than its point set. Therefore the member BIBDs cannot be symmetric. This proves the claim that the designs achieving equality in \eqref{eq:seed_size_lb_variance} are truly quasi-symmetric, completing the proof of Corollary \ref{cor:eq_bed_variance}.

\subsection{Examples}

\begin{ex}\label{ex:aff_OCFU}
	It was shown in \cite{GGP_mosaics} that a mosaic of BIBDs $(D_\alpha)$ can be constructed from any resolvable BIBD $D$ in such a way that every $D_\alpha$ is isomorphic to $D$. Starting with an affine design, this gives a mosaic of designs satisfying equality in Corollary \ref{cor:OCFU_S_lb}. We recall the explicit form of a corresponding OCFU function, originally given in \cite{mosaics}. Let $q$ be a prime power and $\mbb F_q$ the field with $q$ elements. For any positive integer $t$, let $D$ be the affine designs on $\mbb F_q^t$ with blocks given by the hyperplanes (cosets of $(t-1)$-dimensional subspaces)  of $\mbb F_q^t$. Every $(t-1)$-dimensional subspace of $\mbb F_q^t$ can be identified with the solution space of the equation $\sum_ih_ix_i=0$, where $h$ is a unique nonzero vector in $\mbb F_q^t$ whose first nonzero component is 1. Denote the set of such vectors by $\mc H$ and define $f:\mbb F_q^t\times(\mc H\times\mbb F_q)\to\mbb F_q$ by
	\[
		f(x;h,\beta)=\sum_ih_ix_i+\beta. 
	\]
    For fixed $\alpha\in\mbb F_q$, as the pair $(h,\beta)$ ranges over all possible values, the preimages $f(\cdot;h,\beta)^{-1}(\alpha)=B_{h,\beta;\alpha}$ range over all hyperplanes of $\mbb F_q^t$. Hence the incidence structure $D_\alpha$ on $\mc X$ formed by the block set $\{B_{h,\beta;\alpha}:h\in\mc H,\beta\in\mbb F_q\}$ and the $\in$ relation is isomorphic to $D$, and the family $M=(D_\alpha)_{\alpha\in\mbb F_q}$ is a mosaic of BIBDs. Thus $f$ is an OCFU hash function.
    
    The example shows that the bound from Corollary \ref{cor:OCFU_S_lb} is tight (not surprisingly, given \eqref{eq:aff_des_blocks}). In case $t=2$, which corresponds to a mosaic of affine planes, equality holds in \eqref{eq:seed_size_lb_simple} as well. 
\end{ex}

\begin{ex}\label{ex:dual_aff}
    Consider the dual $\tilde M$ of the mosaic $M$ of affine BIBDs considered in the previous example. Since two distinct non-parallel hyperplanes meet in $q^{t-2}$ points, $f=f_{\tilde M}$ is a $1/\abs{\mc A}$-ACFU hash function. Since an affine design is quasi-symmetric, $f$ attains the bound \eqref{eq:seed_size_lb_variance}. If $f$ derives from the dual of a mosaic of affine planes (i.e., $t=2$), then it also satisfies equality in \eqref{eq:seed_size_lb_simple}. Written as a function with the same notation as in the previous example, $f$ satisfies the formula
    \[
        f:(\mc H\times\mbb F_q)\times\mbb F_q^t\to\mbb F_q,\quad f(h,\beta;x)=\sum_{i=1}^th_ix_i+\beta.
    \]
\end{ex}

\begin{ex}\label{ex:Denniston}
    Any OCFU hash function $f:\mc X\times\mc S\to\mc A$ whose underlying mosaic $M(f)$ is a mosaic of BIBD$(v,k,1)$ (with $v=\abs{\mc X},k=\abs{X}/\abs{\mc A}$) satisfies equality in \eqref{eq:seed_size_lb_simple}. In this case, $\abs{\mc A}^2$ has to be at least as large as $\abs{\mc X}$ by the discussion after Corollary \ref{cor:OCFU_S_lb}. An example was studied in \cite[p.~608]{mosaics}, based on the resolvable designs arising from Denniston's construction of maximal arcs in projective space over $\mbb F_2$. Unfortunately, the corresponding OCFU hash function does not have a nice closed form, but it was shown in \cite{mosaics} to be polynomial-time computable. The size of $\abs{\mc A}$ ranges between $\sqrt{\abs{\mc X}}$ and $\abs{\mc X}$.
\end{ex}

\begin{ex}\label{ex:TD}
    We would also like to have $1/\abs{\mc A}$-ACFU hash functions with a small seed set in the range where \eqref{eq:seed_size_lb_simple} is the relevant lower bound for the seed size, i.e., where $\abs{\mc A}(\abs{\mc A}+1)\geq\abs{\mc X}$. Again let $q$ be a prime power, let $\mc H$ be a subset of $\mbb F_q$, set $\mc X=\mc H\times\mbb F_q$ as well as $\mc S=\mbb F_q^2$ and $\mc A=\mbb F_q$. Then define \cite[p.~610]{mosaics}
    \[
        f(h,y;s_1,s_2)=s_2-hs_1+y.
    \]
    Since for distinct $(h,y)$ and $(h',y')$ and any $\alpha\in\mbb F_q$ there exists at most one $(s_1,s_2)$ such that $f(h,y;s_1,s_2)=f(h',y';s_1,s_2)=\alpha$, this function satisfies equality in \eqref{eq:seed_size_lb_simple}. (If we set $\mc H=\mbb F_q$, additionally extend it by the symbol $\infty$ and define $f(\infty,y,s_1,s_2)=s_1+y$, then we obtain the case $t=2$ from Example \ref{ex:dual_aff}.)
    
    The combinatorial structure of $f$ is as follows. We can partition $\mc X$ into the $\abs{\mc H}$ point classes $\{(h,y):y\in\mbb F_q\}$. If two distinct points $(h,y)$ and $(h',y')$ are from the same point class, then no block of $M(f)=(D_\alpha)_{\alpha\in\mc A}$ contains both of them. If they come from different point classes, then for each $\alpha\in\mc A$, there exists a unique block from $D_\alpha$ containing them both. That means that each $D_\alpha$ is isomorphic to the same transversal design, and the members of $M(f)$ even share the same point class partition. 
    
    By passing to the dual $\tilde f:\mc S\times\mc X\to\mc A$, we obtain a seed-optimal $1/\abs{\mc H}$-ACFU hash function. $M(\tilde f)=\tilde M(f)$ is a mosaic of nets.
\end{ex}

\subsection{Comparison with AU and ASU bounds}

For completeness and comparison, we briefly discuss lower bounds on the size of the seed set of an AU or ASU function. 

\begin{lem}[\cite{Stinson_UH_AC}, Theorems 4.1 and 4.3]\label{lem:seed_size_AU_ASU}
    Let $\varepsilon>0$. 
    \begin{enumerate}
        \item For any $\varepsilon$-AU hash function $f:\mc X\times\mc S\to\mc A$,
        \begin{equation}\label{eq:AU_seed_size}
            \abs{\mc S}
            \geq\frac{\abs{\mc X}(\abs{\mc A}-1)}{\varepsilon\abs{\mc A}(\abs{\mc X}-\abs{\mc A})+\abs{\mc A}^2-\abs{\mc X}}.
        \end{equation}
        \item For any $\varepsilon$-ASU hash function $f:\mc X\times\mc S\to\mc A$,
        \begin{equation}\label{eq:ASU_seed_eq}
            \abs{\mc S}
            \geq1+\frac{\abs{\mc X}(\abs{\mc A}-1)^2}{\varepsilon\abs{\mc A}(\abs{\mc X}-1)+\abs{\mc A}-\abs{\mc X}}.
        \end{equation}
    \end{enumerate}
\end{lem}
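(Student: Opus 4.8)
The plan is to derive both bounds by the same second-moment (``variance'') counting that yielded \eqref{eq:seed_size_lb_variance} in the proof of Theorem~\ref{thm:seed_size_lb}, but now applied to $f$ itself — to agreements between two rows of the $\abs{\mc S}\times\abs{\mc X}$ array of values of $f$ — rather than to one member of the mosaic; this is essentially Stinson's original argument. I would treat the ASU bound \eqref{eq:ASU_seed_eq} (part 2) first, since property (ASU1) makes it cleaner, and then part 1.

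\emph{The bound \eqref{eq:ASU_seed_eq}.} Fix a seed $s\in\mc S$ and, for each $\sigma\neq s$, set $\lambda_\sigma=\abs{\{x\in\mc X:f(x,s)=f(x,\sigma)\}}$. Summing over $x$ and using (ASU1) (each block $B_{x,\alpha}$ has size $\abs{\mc S}/\abs{\mc A}$) gives $\sum_{\sigma\neq s}\lambda_\sigma=\abs{\mc X}\bigl(\abs{\mc S}/\abs{\mc A}-1\bigr)$. Rewriting $\sum_{\sigma\neq s}\lambda_\sigma(\lambda_\sigma-1)$ as a sum over ordered pairs $x\neq x'$ of $\abs{\{\sigma\neq s:f(x,\sigma)=f(x,s),\ f(x',\sigma)=f(x',s)\}}$ and applying (ASU2) with $\alpha=f(x,s)$ and $\alpha'=f(x',s)$ — these need not be equal, so the full strength of (ASU2) is used — after discarding the term $\sigma=s$, which always lies in the set, gives $\sum_{\sigma\neq s}\lambda_\sigma(\lambda_\sigma-1)\leq\abs{\mc X}(\abs{\mc X}-1)\bigl(\varepsilon\abs{\mc S}/\abs{\mc A}-1\bigr)$. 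Writing $\overline\lambda$ for the mean of the $\lambda_\sigma$ and expanding $0\leq\sum_{\sigma\neq s}(\lambda_\sigma-\overline\lambda)^2=\sum_{\sigma\neq s}\lambda_\sigma(\lambda_\sigma-1)+\sum_{\sigma\neq s}\lambda_\sigma-(\abs{\mc S}-1)\overline\lambda^2$, substituting the two estimates and clearing denominators leaves a polynomial inequality in $\abs{\mc S}$ whose constant term cancels; the resulting linear inequality rearranges to exactly \eqref{eq:ASU_seed_eq}.

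\emph{The bound \eqref{eq:AU_seed_size}.} Now (ASU1) is unavailable, so I would fix a seed $s$, put $k_\alpha=\abs{B_{s,\alpha}}$ (recall \eqref{eq:dual_blocks}) and $K=\sum_\alpha k_\alpha^2$, and count — summed over $\sigma\neq s$ — the ordered pairs $x\neq x'$ lying in a common block both of $s$ and of $\sigma$. Each of the $K-\abs{\mc X}$ ordered pairs sharing a block of $s$ collides, in the sense of (AU), at no more than $\varepsilon\abs{\mc S}-1$ seeds $\sigma\neq s$, so this count is $\leq(\varepsilon\abs{\mc S}-1)(K-\abs{\mc X})$. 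On the other hand, for fixed $\sigma$ the count equals $\sum_{\alpha,\beta}m_{\alpha\beta}^2-\abs{\mc X}$ with $m_{\alpha\beta}=\abs{B_{s,\alpha}\cap B_{\sigma,\beta}}$, and row-wise Cauchy--Schwarz ($\sum_\beta m_{\alpha\beta}^2\geq k_\alpha^2/\abs{\mc A}$, using $\sum_\beta m_{\alpha\beta}=k_\alpha$) bounds it below by $(\abs{\mc S}-1)\bigl(K/\abs{\mc A}-\abs{\mc X}\bigr)$. Combining the two estimates, collecting the coefficient of $K$, and then inserting the convexity bound $K\geq\abs{\mc X}^2/\abs{\mc A}$ gives, after rearrangement, exactly \eqref{eq:AU_seed_size}.

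The algebraic simplifications are routine; the delicate point is the sign bookkeeping. In the AU step one must check that the coefficient multiplying $K$ in the combined inequality is nonnegative before replacing $K$ by its lower bound $\abs{\mc X}^2/\abs{\mc A}$ — this holds for the range of $\varepsilon$ that interests us (near $1/\abs{\mc A}$) and also cleanly separates off the degenerate case $\abs{\mc A}\geq\abs{\mc X}$, where $K=\abs{\mc X}$ and the bound is trivial. In both steps one further needs the relevant denominator, $\varepsilon\abs{\mc A}(\abs{\mc X}-\abs{\mc A})+\abs{\mc A}^2-\abs{\mc X}$ respectively $\varepsilon\abs{\mc A}(\abs{\mc X}-1)+\abs{\mc A}-\abs{\mc X}$, to be positive, so that the concluding division preserves the direction of the inequality; by Lemma~\ref{lem:epsilon_lb} this holds (at least) whenever $\varepsilon$ exceeds the optimal value, and at the optimal value itself the ASU bound \eqref{eq:ASU_seed_eq} is vacuous.
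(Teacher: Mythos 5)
This lemma is imported from Stinson and the paper gives no proof of its own, so there is nothing to match your argument against except the ``variance method'' the paper itself uses for Theorem~\ref{thm:seed_size_lb}; your reconstruction is exactly that method (applied to agreements between pairs of seeds rather than within one member of the mosaic), and it is essentially Stinson's original argument. Your ASU derivation is complete and correct: I checked that the constant term of the resulting quadratic in $\abs{\mc S}$ does cancel and that the linear inequality rearranges to \eqref{eq:ASU_seed_eq}; the denominator $\varepsilon\abs{\mc A}(\abs{\mc X}-1)+\abs{\mc A}-\abs{\mc X}$ is strictly positive for any nontrivial ASU function because $\varepsilon\geq1/\abs{\mc A}$ there, which is a cleaner justification than invoking Lemma~\ref{lem:epsilon_lb}. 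Your AU derivation is also sound in structure, and the denominator of \eqref{eq:AU_seed_size} is in fact positive already at the optimal $\varepsilon$ of Lemma~\ref{lem:epsilon_lb} (it equals $\abs{\mc X}(\abs{\mc A}-1)^2/(\abs{\mc X}-1)$ there), so no $\varepsilon$ needs to be excluded on that account.

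The one genuine loose end is the sign of the coefficient of $K$, which you flag but do not resolve: the lemma is stated for all $\varepsilon>0$, and the coefficient $c=\bigl(\abs{\mc S}(1-\varepsilon\abs{\mc A})+\abs{\mc A}-1\bigr)/\abs{\mc A}$ really can be negative for legitimate parameters (e.g.\ $\varepsilon=1/2$, $\abs{\mc A}=4$, $\abs{\mc S}>3$), so ``the range of $\varepsilon$ that interests us'' does not cover the statement. The fix is one line: if $c<0$ then necessarily $\varepsilon\abs{\mc A}>1$ and $\abs{\mc S}>(\abs{\mc A}-1)/(\varepsilon\abs{\mc A}-1)$, and a direct comparison shows
\[
\frac{\abs{\mc A}-1}{\varepsilon\abs{\mc A}-1}\;\geq\;\frac{\abs{\mc X}(\abs{\mc A}-1)}{\varepsilon\abs{\mc A}(\abs{\mc X}-\abs{\mc A})+\abs{\mc A}^2-\abs{\mc X}}
\quad\Longleftrightarrow\quad \varepsilon\leq1,
\]
while for $\varepsilon\geq1$ the right-hand side of \eqref{eq:AU_seed_size} is at most $1$ and the bound is vacuous. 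With that case added, your proof of part 1 is complete as well.
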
 

\begin{table}[!t]
\footnotesize
\centering
\captionsetup{width=.8\linewidth}
{
\begin{tabular}{M{2cm}|M{1.5cm}|M{4.5cm}|M{3cm} N}
	$\varepsilon$ & AU & ACFU & ASU & \\ [0.1cm]
	\hline 
	\vspace{1em} optimal & \vspace{0.5em} $\dfrac{\abs{\mc X}-1}{\abs{\mc A}-1}$ & \vspace{0.5em} $\dfrac{\abs{\mc A}(\abs{\mc X}-1)}{\abs{\mc A}-1}$ & \vspace{1em} --- & \\[0.5cm]
	\hline
	\vspace{0.5em} $\dfrac{1}{\abs{\mc A}}$ & \vspace{0.5em} $\dfrac{\abs{\mc X}}{\abs{\mc A}}$ & \vspace{0.5em} $\max\left\{\abs{\mc A}^2,1+\dfrac{\abs{\mc X}(\abs{\mc A}-1)}{\abs{\mc A}}\right\}$ & \vspace{1em} $1+\abs{\mc X}(\abs{\mc A}-1)$ & \\[0.2cm]
\end{tabular}}
\caption{The lower bounds for optimal $\varepsilon$ and $\varepsilon=1/\abs{\mc A}$. For ASU hash functions, $\varepsilon\geq1/\abs{\mc A}$ \cite{Stinson_UH_AC}.}
\label{table}
\end{table}

We also have an additional simple lower bound for the seed size of ASU hash functions analogous to \eqref{eq:seed_size_lb_simple}, which to our knowledge has not yet been stated explicitly anywhere.

\begin{lem}\label{lem:ASU_seed_simple}
	Let $f:\mc X\times\mc S\to\mc A$ be an $\varepsilon$-ASU hash function. Then
        \[
        	\abs{\mc S}\geq \frac{\abs{\mc A}}{\varepsilon}.
        \]
\end{lem}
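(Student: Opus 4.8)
The plan is to adapt the short argument used to establish the bound \eqref{eq:seed_size_lb_simple} in Theorem~\ref{thm:seed_size_lb}, exploiting that the left-hand side of (ASU2) is always a nonnegative integer. First I would argue by contradiction: suppose $\abs{\mc S}<\abs{\mc A}/\varepsilon$, i.e.\ $\varepsilon\abs{\mc S}/\abs{\mc A}<1$. Then in (ASU2) the right-hand side is strictly smaller than $1$ while the left-hand side is an integer $\geq 0$, so
\[
	\abs{\{s\in\mc S:f(x,s)=\alpha,\ f(x',s)=\alpha'\}}=0
\]
for all distinct $x,x'\in\mc X$ and all $\alpha,\alpha'\in\mc A$.

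To produce the contradiction, I would fix any $s\in\mc S$ and any two distinct points $x,x'\in\mc X$, and set $\alpha=f(x,s)$ and $\alpha'=f(x',s)$. Then $s$ lies in the set $\{s'\in\mc S:f(x,s')=\alpha,\ f(x',s')=\alpha'\}$, so this set is nonempty, contradicting the displayed equality. Hence $\varepsilon\abs{\mc S}/\abs{\mc A}\geq 1$, which is exactly $\abs{\mc S}\geq\abs{\mc A}/\varepsilon$. Equivalently, one can phrase it without a contradiction: by (ASU1) some block $B_{x,\alpha}$ is nonempty; pick $s\in B_{x,\alpha}$, put $\alpha'=f(x',s)$, and (ASU2) directly yields $1\leq\varepsilon\abs{\mc S}/\abs{\mc A}$. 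Note that, in either form, only (ASU2) (together with the mere existence of a seed) is actually used.

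I do not expect a genuine obstacle: this is a counting triviality once one observes that the constrained quantity is integer-valued. The only point needing a word of care is the same implicit nondegeneracy assumption already invoked in the proof of \eqref{eq:seed_size_lb_simple}, namely that $\mc S\neq\emptyset$ and that $\mc X$ contains two distinct points; without the latter, (ASU2) imposes no constraint and the inequality would fail (take $\varepsilon=0$ when $\abs{\mc X}=1$). So I would state the lemma, as in the excerpt, under this tacit nontriviality convention.
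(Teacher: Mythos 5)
Your proof is correct and is essentially the paper's own argument: fix a seed $s$ and two distinct points $x,x'$, set $\alpha=f(x,s)$ and $\alpha'=f(x',s)$, and conclude from (ASU2) that $1\leq\varepsilon\abs{\mc S}/\abs{\mc A}$. The additional remark about needing $\abs{\mc X}\geq2$ is a fair (if minor) observation about the tacit nontriviality convention.
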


\begin{proof}
    Assume $f(x,s)=\alpha$. For any distinct $x'\in\mc X$, there must exist $\alpha'\in\mc A$ such that $f(x',s)=\alpha'$. Therefore $\varepsilon\abs{\mc S}/\abs{\mc A}\geq 1$.
\end{proof}

It is simple to check that the relation between the ASU bounds is as follows.

\begin{lem}\label{lem:ASU_seed_distinction}
	For an $\varepsilon$-ASU hash function $f:\mc X\times\mc S\to\mc A$ with $\abs{\mc X}>\abs{\mc A}$, the right-hand side of the bound from Lemma \ref{lem:ASU_seed_simple} is larger than the one from \eqref{eq:ASU_seed_eq} if and only if
	\[
		\frac{\abs{\mc X}-\abs{\mc A}}{\abs{\mc X}-1}\leq\varepsilon\leq 1.
	\]
\end{lem}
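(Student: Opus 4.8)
The claim is a direct comparison of two explicit rational functions of $\varepsilon$. The plan is to set up the inequality $\abs{\mc A}/\varepsilon \geq 1+\abs{\mc X}(\abs{\mc A}-1)^2/(\varepsilon\abs{\mc A}(\abs{\mc X}-1)+\abs{\mc A}-\abs{\mc X})$ and clear denominators. First I would note that, for an $\varepsilon$-ASU hash function with $\abs{\mc X}>\abs{\mc A}$, the denominator $\varepsilon\abs{\mc A}(\abs{\mc X}-1)+\abs{\mc A}-\abs{\mc X}$ appearing in \eqref{eq:ASU_seed_eq} is strictly positive: since $\varepsilon\geq1/\abs{\mc A}$ (the ASU feasibility bound from \cite{Stinson_UH_AC}, recalled in the caption of Table \ref{table}), we have $\varepsilon\abs{\mc A}(\abs{\mc X}-1)\geq\abs{\mc X}-1>\abs{\mc X}-\abs{\mc A}$, so the denominator exceeds $\abs{\mc A}-1\geq0$, in fact it is positive whenever $\abs{\mc A}\geq1$. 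Likewise $\varepsilon>0$, so both sides of the inequality to be compared are well defined and positive.

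Next I would multiply the target inequality through by $\varepsilon$ and by the positive quantity $\varepsilon\abs{\mc A}(\abs{\mc X}-1)+\abs{\mc A}-\abs{\mc X}$. Writing $a=\abs{\mc A}$, $v=\abs{\mc X}$ for brevity, the inequality $a/\varepsilon\geq 1+v(a-1)^2/(\varepsilon a(v-1)+a-v)$ becomes, after multiplying out,
\[
	a\bigl(\varepsilon a(v-1)+a-v\bigr)\geq \varepsilon\bigl(\varepsilon a(v-1)+a-v\bigr)+\varepsilon v(a-1)^2.
\]
Expanding and collecting terms in $\varepsilon$ yields a quadratic inequality $A\varepsilon^2+B\varepsilon+C\leq 0$ in $\varepsilon$; one computes $A=a(v-1)$, and after simplification the quadratic factors. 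The expected outcome is that the left endpoint of the claimed interval, $\varepsilon=(v-a)/(v-1)$, is a root, and the factorization takes the form $a(v-1)(\varepsilon-1)\bigl(\varepsilon-\tfrac{v-a}{v-1}\bigr)\leq 0$ (up to checking the leading coefficient and one intermediate coefficient), which holds precisely for $\varepsilon$ between $(v-a)/(v-1)$ and $1$. Since $a(v-1)>0$, the inequality $A\varepsilon^2+B\varepsilon+C\leq0$ holds exactly on the closed interval between the two roots, giving the asserted range.

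The only real obstacle is bookkeeping: verifying that the quadratic indeed factors as $a(v-1)(\varepsilon-1)(\varepsilon-\tfrac{v-a}{v-1})$ requires carefully matching the constant term $C=a(a-v)$ against $a(v-1)\cdot1\cdot\tfrac{v-a}{v-1}=a(v-a)$ — note the sign: $C=a(a-v)=-a(v-a)$, so the product of roots is $C/A=(a-v)/(v-1)$, while the product $1\cdot\tfrac{v-a}{v-1}=\tfrac{v-a}{v-1}$; these match only up to sign, so I would instead check directly that $\varepsilon=1$ and $\varepsilon=(v-a)/(v-1)$ are both roots by substitution, conclude the quadratic is $A(\varepsilon-1)(\varepsilon-\tfrac{v-a}{v-1})$ by matching leading coefficients, and then read off the sign. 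Substituting $\varepsilon=1$: the original inequality becomes $a\geq 1+v(a-1)^2/(a(v-1)+a-v)=1+v(a-1)^2/((a-1)v-(v-a))$; since $a(v-1)+a-v=(a-1)v+(a-v)=(a-1)(v-1)+(a-1)=(a-1)v-(v-a)$... I would just verify $a-1=v(a-1)^2/(av-v+a-v)$ reduces to $av-v+a-v=v(a-1)$, i.e. $a(v-1)=v(a-1)$, which is false in general — so $\varepsilon=1$ is \emph{not} a root of the difference, and the correct statement is that equality of the two bounds holds at $\varepsilon=(v-a)/(v-1)$ and at $\varepsilon=1$ the simple bound $a/\varepsilon=a$ strictly exceeds \eqref{eq:ASU_seed_eq}; hence the simple bound dominates on the whole interval $[(v-a)/(v-1),1]$ and the quadratic has the sign pattern claimed. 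In the write-up I would therefore present the clean version: clear denominators, observe the resulting difference is a quadratic in $\varepsilon$ vanishing at $\varepsilon=(v-a)/(v-1)$, check its value and the direction of the inequality at the single additional test point $\varepsilon=1$ (where it is strictly positive), and conclude by continuity/convexity that the simple bound is the larger one exactly on $[(v-a)/(v-1),1]$.
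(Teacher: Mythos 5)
Your overall strategy -- clear denominators, reduce to a quadratic in $\varepsilon$, and factor -- is exactly the intended one (the paper offers no proof, calling the lemma ``simple to check''), and your preliminary observation that the denominator $\varepsilon\abs{\mc A}(\abs{\mc X}-1)+\abs{\mc A}-\abs{\mc X}$ is positive because $\varepsilon\ge1/\abs{\mc A}$ is correct and worth keeping. However, the execution goes off the rails through two arithmetic slips, and the patched-up ending is not sound. Writing $a=\abs{\mc A}$, $v=\abs{\mc X}$: after clearing denominators the difference is $a(v-1)\varepsilon^2+a(a+1-2v)\varepsilon+a(v-a)\le0$, so the constant term is $a(v-a)$, not $a(a-v)$; with the correct sign the product of the roots is $(v-a)/(v-1)$, which matches $1\cdot\frac{v-a}{v-1}$ exactly, and your originally guessed factorization $a(v-1)(\varepsilon-1)\bigl(\varepsilon-\tfrac{v-a}{v-1}\bigr)\le0$ is correct. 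Your second slip is in the spot check at $\varepsilon=1$: there $a(v-1)+a-v=av-a+a-v=v(a-1)$ (you expanded it to $av+a-2v$), so the variance bound equals $1+\frac{v(a-1)^2}{v(a-1)}=a$, i.e.\ the two bounds \emph{coincide} at $\varepsilon=1$ (as they also do at $\varepsilon=\frac{v-a}{v-1}$ -- the lemma's ``larger'' must be read as ``at least as large''). So $\varepsilon=1$ is a root after all, and your conclusion that the simple bound strictly exceeds the other there is false.

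This matters for the logic, not just the bookkeeping: your final plan is to verify the single root $\frac{v-a}{v-1}$, check that the difference is strictly positive at $\varepsilon=1$, and ``conclude by continuity/convexity'' that the crossover interval is exactly $\bigl[\tfrac{v-a}{v-1},1\bigr]$. But a quadratic with positive leading coefficient that vanishes at $\frac{v-a}{v-1}<1$ and is strictly negative at $1$ has its second root strictly greater than $1$, so that argument cannot pin the upper endpoint at $1$; it would only give the statement after artificially truncating at $\varepsilon\le1$. The clean fix is simply to keep your first instinct: verify by substitution that both $\varepsilon=1$ and $\varepsilon=\frac{v-a}{v-1}$ are roots, match the leading coefficient $a(v-1)>0$, and read off that the quadratic is $\le0$ precisely between the two roots, which (since $a>1$ forces $\frac{v-a}{v-1}<1$) is the closed interval claimed.
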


Note that, if $\abs{\mc X}>\abs{\mc A}$, the left-hand side of the inequality of Lemma \ref{lem:ASU_seed_distinction} is always at least $1/\abs{\mc A}$. Hence given $\abs{\mc X}$ and $\abs{\mc A}$, there is always a range of $\varepsilon$ sufficiently close to $1/\abs{\mc A}$ where the bound from Lemma \ref{lem:seed_size_AU_ASU} applies.

\begin{rem}\label{rem:AU_simple}
	The analogous lower bound $\abs{\mc S}\geq1/\varepsilon$ for $\varepsilon$-AU hash functions gives no new information, since $1/\varepsilon$ is always smaller than the right-hand side of \eqref{eq:AU_seed_size}, with equality if and only if $\abs{\mc X}=\abs{\mc A}^2$.
\end{rem}

We are again interested in conditions for equality in the above bounds. For the OU case, Stinson gives the following criterion.

\begin{lem}[\cite{Stinson_comb_tech_UH}, Theorem 2.2]\label{lem:AU_eq_bed}
	If $f$ is an OU hash function satisfying equality in \eqref{eq:AU_seed_size}, then $\Sigma M(f)$ is an affine BIBD. Conversely, if $D$ is any affine BIBD, then the function $f_M$ induced by the mosaic $M$ corresponding to $D$ by Lemma \ref{lem:mos_res} is an OU hash function which satisfies equality in \eqref{eq:AU_seed_size}.
\end{lem}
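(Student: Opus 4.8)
The plan is to match the design parameters of $\Sigma M(f)$ against $\abs{\mc X},\abs{\mc S},\abs{\mc A}$ and then invoke the classical equality case of Bose's inequality for resolvable designs, on top of the results of Stinson already quoted in Lemma \ref{lem:Stinson_lemma}. Write $D=\Sigma M(f)$ for an OU hash function $f:\mc X\times\mc S\to\mc A$. By Lemma \ref{lem:Stinson_lemma}(1), $D$ is a resolvable BIBD, and its canonical resolution is into the parallel classes $\{(s,\alpha):\alpha\in\mc A\}$, $s\in\mc S$ (each $x$ lies in the unique block $(s,f(x,s))$ of such a class). In any resolvable BIBD each point lies in exactly one block of each parallel class, so the number of parallel classes equals $r$; hence $r=\abs{\mc S}$, each class partitions the $v=\abs{\mc X}$ points into blocks of the common size $k$ so that $\abs{\mc A}=v/k$, and therefore $b=\abs{\mc S}\abs{\mc A}$. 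Consequently the affine-design identity $b=v+r-1$ (see \eqref{eq:aff_des_blocks}) is equivalent to $\abs{\mc S}(\abs{\mc A}-1)=\abs{\mc X}-1$, i.e.\ to $\abs{\mc S}=(\abs{\mc X}-1)/(\abs{\mc A}-1)$.

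Next I would record the routine algebraic identity that, when $\varepsilon$ equals the optimal value from Lemma \ref{lem:epsilon_lb}, the right-hand side of \eqref{eq:AU_seed_size} collapses to $(\abs{\mc X}-1)/(\abs{\mc A}-1)$ (its denominator becomes $\abs{\mc X}(\abs{\mc A}-1)^2/(\abs{\mc X}-1)$). Thus, for an OU hash function, equality in \eqref{eq:AU_seed_size} holds exactly when $\abs{\mc S}=(\abs{\mc X}-1)/(\abs{\mc A}-1)$, which by the previous paragraph is precisely the condition $b=v+r-1$ for $D$.

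The forward direction then follows from the classical theorem on resolvable designs, namely Bose's inequality $b\geq v+r-1$ for a resolvable BIBD, with equality if and only if the design is affine (equivalently, resolvable and quasi-symmetric; this is the same circle of ideas as the Roy--Mikhail bound used in the proof of Corollary \ref{cor:OCFU_S_lb}). Since $D$ is resolvable by Lemma \ref{lem:Stinson_lemma}(1), equality in \eqref{eq:AU_seed_size} forces $b=v+r-1$, hence $D=\Sigma M(f)$ is affine. For the converse, let $D$ be an affine BIBD; being affine, it is in particular resolvable, so by Lemma \ref{lem:mos_res} it determines a mosaic $M$ with $\Sigma M=D$, and by Lemma \ref{lem:Stinson_lemma}(2) the induced function $f_M$ is an $\varepsilon$-AU hash function with the optimal $\varepsilon$, i.e.\ an OU hash function. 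With the same parameter bookkeeping as above ($\abs{\mc S}=r$, $\abs{\mc A}=v/k$, $b=\abs{\mc S}\abs{\mc A}$), the defining relation $b=v+r-1$ of an affine design (\eqref{eq:aff_des_blocks}) translates into $\abs{\mc S}=(\abs{\mc X}-1)/(\abs{\mc A}-1)$, which is exactly equality in \eqref{eq:AU_seed_size} for $f_M$.

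The only ingredient going beyond bookkeeping and previously quoted results is the equality case of Bose's inequality for resolvable BIBDs; I expect that, together with the careful translation between the design parameters $(v,b,r,k)$ and the cardinalities $\abs{\mc X},\abs{\mc S},\abs{\mc A}$, to be the only delicate point, the rest being the short algebraic verification of the denominator simplification.
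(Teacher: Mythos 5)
Your argument is correct: the simplification of the right-hand side of \eqref{eq:AU_seed_size} at the optimal $\varepsilon$ to $(\abs{\mc X}-1)/(\abs{\mc A}-1)$, the parameter dictionary $v=\abs{\mc X}$, $r=\abs{\mc S}$, $b=\abs{\mc S}\abs{\mc A}$, $k=v/\abs{\mc A}$ for $\Sigma M(f)$, and the reduction to the equality case of Bose's inequality for resolvable BIBDs all check out. The paper itself gives no proof, quoting the result as Stinson's Theorem 2.2, and your derivation is essentially the classical argument underlying that theorem.
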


No result is known to us which characterizes $\varepsilon$-AU hash functions satisfying equality in \eqref{eq:AU_seed_size} for general $\varepsilon$.

For the ASU case of Lemma \ref{lem:seed_size_AU_ASU}, van Trung determines the condition for equality. Our proof of Corollary \ref{cor:eq_bed_variance} is similar to the proof of van Trung's result.

\begin{lem}[\cite{vanTrung_ASU}, Theorem 3.1]\label{lem:van_Trung_lem}
	An $\varepsilon$-ASU hash function $f:\mc X\times\mc S\to\mc A$ satisfies equality in \eqref{eq:ASU_seed_eq} if $\Sigma\tilde M(f)$ is a resolvable quasi-symmetric design with one intersection number equal to zero, where $\tilde M(f)$ is the dual of $M(f)$. In the other direction, if a resolvable quasi-symmetric design $D$ is given with one intersection number equal to $0$, then there exists an $\varepsilon$ such that $f_{\tilde M}$ is an $\varepsilon$-ASU hash function satisfying equality in \eqref{eq:ASU_seed_eq}, where $M$ is the mosaic determined by $D$ via Lemma \ref{lem:mos_res} and $\tilde M$ is its dual.
\end{lem}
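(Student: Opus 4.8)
The plan is to establish the statement as an \emph{equivalence}: a nontrivial $\varepsilon$-ASU hash function $f$, with $\varepsilon$ taken to be its least admissible value, attains equality in \eqref{eq:ASU_seed_eq} if and only if $\Sigma\tilde M(f)$ is a resolvable quasi-symmetric design with one intersection number equal to $0$. The argument mirrors the proofs of Theorem \ref{thm:seed_size_lb} and Corollary \ref{cor:eq_bed_variance}, but is carried out one level up, on the sum $\Sigma\tilde M(f)$ rather than on the individual members of a mosaic; the construction statement then follows by applying the equivalence to the function attached to $D$ via Lemma \ref{lem:mos_res}.

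First I would rederive \eqref{eq:ASU_seed_eq} by Stinson's variance method. Fix a seed $s_0\in\mc S$ and, for $s\neq s_0$, put $\lambda_s=\abs{\{x\in\mc X:f(x,s)=f(x,s_0)\}}$, which is the number of blocks of $\Sigma\tilde M(f)$ through both $s$ and $s_0$. Counting pairs $(x,s)$ with $f(x,s)=f(x,s_0)$ and using (ASU1) gives $\sum_{s\neq s_0}\lambda_s=\abs{\mc X}(\abs{\mc S}/\abs{\mc A}-1)$; counting triples $(s,x,x')$ with $x\neq x'$, $f(x,s)=f(x,s_0)$ and $f(x',s)=f(x',s_0)$, and applying (ASU2) to the fixed value pair $(f(x,s_0),f(x',s_0))$, gives $\sum_{s\neq s_0}\lambda_s(\lambda_s-1)\leq\abs{\mc X}(\abs{\mc X}-1)(\varepsilon\abs{\mc S}/\abs{\mc A}-1)$. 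Inserting both into $0\leq\sum_{s\neq s_0}(\lambda_s-\overline\lambda)^2$, with $\overline\lambda=\abs{\mc X}(\abs{\mc S}/\abs{\mc A}-1)/(\abs{\mc S}-1)$, and solving for $\abs{\mc S}$ produces \eqref{eq:ASU_seed_eq}; as in Theorem \ref{thm:seed_size_lb}, one checks that the denominator $\varepsilon\abs{\mc A}(\abs{\mc X}-1)+\abs{\mc A}-\abs{\mc X}$ is positive, which may be assumed, since otherwise the bound is vacuous and cannot hold with equality.

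Because this chain of inequalities holds verbatim for every $s_0$, equality in \eqref{eq:ASU_seed_eq} is equivalent to requiring, for every $s_0$, that (i) the $\lambda_s$ are all equal, and (ii) every value pair $(\alpha,\alpha')$ that is realized, in the sense that $(\alpha,\alpha')=(f(x,s_0),f(x',s_0))$ for some $s_0$ and some distinct $x,x'$, satisfies $\abs{\{s:f(x,s)=\alpha,f(x',s)=\alpha'\}}=\varepsilon\abs{\mc S}/\abs{\mc A}=:\mu$. Now $\Sigma\tilde M(f)$ is automatically resolvable (Lemma \ref{lem:mos_res}), with all blocks $B_{x,\alpha}$ of size $\abs{\mc S}/\abs{\mc A}$ by (ASU1) and each point lying in $\abs{\mc X}$ blocks (one per parallel class); hence (i) says exactly that any two distinct points of $\Sigma\tilde M(f)$ lie in a constant number $\overline\lambda$ of blocks, that is, that $\Sigma\tilde M(f)$ is a BIBD. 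For (ii): two blocks in one parallel class are disjoint (using $\abs{\mc A}\geq2$), and a nonempty block $B_{x,\alpha}$ meets a block $B_{x',\alpha'}$ from another parallel class in $0$ points unless $(\alpha,\alpha')$ is realized, in which case the intersection has size $\mu$; since $\mu$ does occur (take $s_0\in B_{x,\alpha}$ and $\alpha'=f(x',s_0)$) and $0$ does too, $\Sigma\tilde M(f)$ is genuinely quasi-symmetric with one intersection number $0$. This proves both directions, and along the way identifies $\varepsilon$ as $\mu\abs{\mc A}/\abs{\mc S}$.

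For the construction, start from a resolvable quasi-symmetric design $D$ on a set $\mc P$ with intersection numbers $0$ and $\mu$, use Lemma \ref{lem:mos_res} to obtain a mosaic $M$ on $\mc P$ with block index set $\mc H$ and $\Sigma M=D$, and set $f=f_{\tilde M}:\mc H\times\mc P\to\mc A$. Property (ASU1) holds because, for fixed $x\in\mc H$, the blocks of $D$ with indices $(x,\alpha)$, $\alpha\in\mc A$, form a parallel class and therefore partition $\mc P$ into parts of equal size $\abs{\mc P}/\abs{\mc A}$; property (ASU2) holds because $\{s\in\mc P:f(x,s)=\alpha,f(x',s)=\alpha'\}$ is the intersection of the two distinct blocks of $D$ indexed by $(x,\alpha)$ and $(x',\alpha')$, hence has size $0$ or $\mu$, so $f$ is an $\varepsilon$-ASU hash function with $\varepsilon=\mu\abs{\mc A}/\abs{\mc P}$. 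Since $\Sigma\tilde M(f)=\Sigma M=D$ (the dual of the mosaic of $f_{\tilde M}$ is the dual of $\tilde M$, namely $M$) has precisely the required structure, the equivalence proved above yields equality in \eqref{eq:ASU_seed_eq}. The main obstacle is not any one hard step but the bookkeeping of the equality analysis: distinguishing realized from unrealized value pairs, verifying that the design is truly quasi-symmetric rather than symmetric, and keeping the layers of dualization straight ($\mc X$ versus $\mc S$, $M$ versus $\tilde M$, members versus the sum $\Sigma$), exactly the care exercised in Corollaries \ref{cor:eq_bed_variance} and \ref{cor:OCFU_S_lb}.
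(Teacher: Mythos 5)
The paper does not prove this lemma itself --- it cites van Trung's Theorem 3.1 --- but your argument is correct and is exactly the expected one: the variance method applied to $\lambda_s=\abs{\{x:f(x,s)=f(x,s_0)\}}$, which reproduces \eqref{eq:ASU_seed_eq} as the instance of the paper's bound \eqref{eq:lb_raw} with $\abs{B_{s,\alpha}}$ replaced by $\abs{\mc X}$, followed by the same equality bookkeeping (realized versus unrealized value pairs, both intersection numbers actually occurring) that the paper carries out in Corollary \ref{cor:eq_bed_variance}, whose proof the authors themselves describe as similar to van Trung's. Your handling of the converse via Lemma \ref{lem:mos_res} and the identification $\varepsilon=\mu\abs{\mc A}/\abs{\mc S}$ are likewise sound.
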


The similarity of this result to ours on ACFU hash functions, Corollary \ref{cor:eq_bed_variance}, is no coincidence, as we will see in the next section (Theorem \ref{thm:block_set_ext}).

For Lemma \ref{lem:ASU_seed_simple}, we cannot characterize those ASU hash functions satisfying equality, but again there is a striking similarity with the ACFU situation (see the discussion after Corollary \ref{cor:eq_bed_variance}) which will be explained in the next section (Theorem \ref{thm:block_set_ext} again). However, a class of designs from which one can construct such ASU hash functions are the nets. A net $\tilde D$ on the point set $\mc S$ obviously is a resolvable incidence structure, so by Lemma \ref{lem:mos_res} it gives rise to a mosaic $\tilde M=(\tilde D_\alpha)_{\alpha\in\mc A}$. The blocks all have the same size and different blocks intersect in at most one point, so equality is satisfied in Lemma \ref{lem:ASU_seed_simple}. Let $\mc X$ be the block index set. Then the dual $M$ of $\tilde M$ induces a function $f_M$ which is an $\varepsilon$-ASU hash function for $\varepsilon=\abs{\mc A}/\abs{\mc S}$. If the net is an affine plane, then $f_M$ also satisfies equality in van Trung's bound. 

Table \ref{table} shows the bounds for the cases of optimal $\varepsilon$ as well as for $\varepsilon=1/\abs{\mc A}$. (Due to the application of ACFU hash functions we have in mind (see Section \ref{sect:motivation}), $\varepsilon\approx 1/\abs{\mc A}$ is what interests us most.) Interestingly, we need to consider the case of OCFU hash functions separately, whereas Stinson's lower bound for $\varepsilon$-AU hash functions also covers OU hash functions.

\section{Constructions of $\varepsilon$-ACFU hash functions}\label{sect:constructions}

\subsection{Extensions of AU and ASU hash functions}\label{sect:affext_examples}

While $\varepsilon$-ACFU hash functions are new, $\varepsilon$-AU and $\varepsilon$-ASU hash functions are well-investigated concepts with many efficiently computable examples, so it would be attractive to be able to turn an $\varepsilon$-AU or $\varepsilon$-ASU hash function into an $\varepsilon$-ACFU hash function. It turns out that this is indeed possible. 

Let $g:\mc X\times\mc H\to\mc A$ be an $\varepsilon$-AU hash function. Let $L$ be a latin square with entries from $\mc A$ and rows and columns indexed by $\mc A$, too. This can equivalently be described as a quasigroup structure on $\mc A$ whose product $\circ$ is defined by the rule $\alpha\circ\beta=L(\alpha,\beta)$. We denote the unique solution $\gamma$ of the equation $\gamma\circ\beta=\alpha$ by $\alpha/\beta$. We can now define the function $\hat g:\mc X\times(\mc H\times\mc A)\to\mc A$ by
\[
    \hat g(x;h,\beta)=g(x,h)\circ\beta,
\]
so its seed set is $\mc S=\mc H\times\mc A$. We call $\hat g$ the \textit{seed extension} of $g$.

\begin{thm}\label{thm:aff_ext}
    The function $g:\mc X\times\mc H\to\mc A$ is an $\varepsilon$-AU hash function if and only if its seed extension $\hat g$ is an $\varepsilon$-ACFU hash function. Each member of $M(\hat g)$ is isomorphic to $\Sigma M(g)$.
\end{thm}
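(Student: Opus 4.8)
The plan is to verify the three conditions that characterize an $\varepsilon$-ACFU hash function directly from the defining formula $\hat g(x;h,\beta)=g(x,h)\circ\beta$, using only the quasigroup axioms on $\circ$ and the hypothesis that $g$ is $\varepsilon$-AU. First I would establish (ACFU1): fix $x\in\mc X$ and $\alpha\in\mc A$ and count the pairs $(h,\beta)$ with $g(x,h)\circ\beta=\alpha$. For each of the $\abs{\mc H}$ choices of $h$, the value $g(x,h)\in\mc A$ is determined, and since $L$ is a latin square, there is exactly one $\beta$ (namely $\beta=\alpha/g(x,h)$ in the appropriate sense, using that every row of $L$ is a permutation) solving $g(x,h)\circ\beta=\alpha$. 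Hence the count is $\abs{\mc H}=\abs{\mc S}/\abs{\mc A}$, which is (ACFU1). Note this step needs no assumption on $g$ at all.

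Next I would treat the collision condition. Fix distinct $x,x'$ and $\alpha\in\mc A$, and count pairs $(h,\beta)$ with $g(x,h)\circ\beta=g(x',h)\circ\beta=\alpha$. The key observation is that, since $\beta\mapsto\gamma\circ\beta$ is injective for each fixed $\gamma$ (latin square column property), $g(x,h)\circ\beta=g(x',h)\circ\beta$ forces $g(x,h)=g(x',h)$; and conversely, given such an $h$, the common value $g(x,h)\circ\beta=\alpha$ again pins down a unique $\beta$. So the count equals $\abs{\{h:g(x,h)=g(x',h)\}}\le\varepsilon\abs{\mc H}=\varepsilon\abs{\mc S}/\abs{\mc A}$, which is exactly (ACFU2). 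For the converse direction, I would run the same identity backwards: if $\hat g$ is $\varepsilon$-ACFU, then for distinct $x,x'$, summing the (ACFU2) bound over all $\alpha\in\mc A$ and using that the collision events for $\hat g$ partition according to the common value shows $\abs{\{(h,\beta):\hat g(x;h,\beta)=\hat g(x';h,\beta)\}}\le\varepsilon\abs{\mc S}$; but by the bijection $\beta\mapsto g(x,h)\circ\beta$ this left-hand side equals $\abs{\mc A}\cdot\abs{\{h:g(x,h)=g(x',h)\}}$, and dividing by $\abs{\mc A}=\abs{\mc S}/\abs{\mc H}$ yields the (AU) bound for $g$. I expect the only mild subtlety here is bookkeeping the correspondence between $\beta$ and the common value carefully so that the partition-by-value argument is airtight; the latin square axioms make each piece a clean bijection, so no real obstacle should arise.

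Finally, for the isomorphism claim, fix $\alpha\in\mc A$ and consider the member $D_\alpha$ of $M(\hat g)$: its blocks are $B_{h,\beta;\alpha}=\{x:g(x,h)\circ\beta=\alpha\}$ for $(h,\beta)\in\mc H\times\mc A$. I would compare this with $\Sigma M(g)$, whose block index set is $\mc H\times\mc A$ and in which $x$ is incident with $(h,\gamma)$ iff $g(x,h)=\gamma$. The natural candidate map on block indices is $(h,\beta)\mapsto(h,\alpha/\beta)$ where $\alpha/\beta$ denotes the unique $\gamma$ with $\gamma\circ\beta=\alpha$; this is a bijection of $\mc H\times\mc A$ because $\beta\mapsto\alpha/\beta$ is a bijection of $\mc A$ (again latin square). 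One then checks $x\in B_{h,\beta;\alpha}\iff g(x,h)\circ\beta=\alpha\iff g(x,h)=\alpha/\beta$, i.e. $x$ incident with $(h,\alpha/\beta)$ in $\Sigma M(g)$, so with the identity on the point set $\mc X$ this is an incidence-structure isomorphism $D_\alpha\cong\Sigma M(g)$. The main thing to be careful about is which latin-square axiom licenses each "unique solution" statement (rows versus columns of $L$), but once the quasigroup notation from the paragraph preceding the theorem is used consistently, every step is a one-line verification; there is no genuinely hard step, only the need to keep the several bijections straight.
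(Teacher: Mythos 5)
Your proposal is correct and follows essentially the same route as the paper: the identity equating the $\hat g$-collision count at each fixed value $\alpha$ with the $g$-collision count $\abs{\{h:g(x,h)=g(x',h)\}}$ is exactly the paper's key equation, and your isomorphism is the paper's reindexing of blocks via $\beta\mapsto\alpha/\beta$. (One cosmetic slip: the cancellation $g(x,h)\circ\beta=g(x',h)\circ\beta\Rightarrow g(x,h)=g(x',h)$ uses injectivity of $\gamma\mapsto\gamma\circ\beta$ for fixed $\beta$ --- the column property you correctly name --- not of $\beta\mapsto\gamma\circ\beta$.)
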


\begin{proof}
    The equation $\hat g(x;h,\beta)=\alpha$ means that $g(x,h)=\alpha/\beta$. Moreover, as $\beta$ varies over $\mc A$, the unique solution $\alpha/\beta$ of $\gamma\circ\beta=\alpha$ assumes all possible values in $\mc A$. Thus for any $x,x'\in\mc X$,
    \begin{align}
        \abs{\{(h,\beta):\hat g(x;h,\beta)=\hat g(x';h,\beta)=\alpha\}}
        =\abs{\{h:g(x,h)=g(x',h)\}}.\label{eq:reg_gen}
    \end{align}
    If $x=x'$, then the set in \eqref{eq:reg_gen} is all of $\mc H$, whose size is $\abs{\mc S}/\abs{\mc A}$. Thus $\hat g$ always satisfies property (ACFU1). Now assume that $x\neq x'$. One sees immediately from \eqref{eq:reg_gen} that $g$ is an $\varepsilon$-AU hash function if and only if $\hat g$ is an $\varepsilon$-ACFU hash function.
    
    Finally, consider the $\alpha$-th member $D_\alpha$ of $M(\hat g)$. Its blocks have the form
    \[
    	B_{h,\beta;\alpha}
    	=\{x:\hat g(x;h,\beta)=\alpha\}
    	=\{x:g(x,h)=\alpha/\beta\},
    \]
    for $h\in\mc H$ and $\beta\in\mc A$. As $h,\beta$ vary over all possible values, the blocks $B_{h,\beta;\alpha}$ vary over all blocks of $\Sigma M(g)$. This shows that $D_\alpha$ is isomorphic to $\Sigma M(g)$.
\end{proof}

As a corollary, we obtain the result of Gnilke, Greferath and Pav\v cevi\'c on the relation between resolvable BIBDs and mosaics of BIBDs. 

\begin{cor}[\cite{GGP_mosaics}, Theorem 3.4]\label{cor:mos_constr_primal}
	For any resolvable BIBD$(v,k,\lambda)$ $D$, there exists a mosaic of BIBD$(v,k,\lambda)$ each of whose members is isomorphic to $D$.
\end{cor}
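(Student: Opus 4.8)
The plan is to obtain this as an immediate consequence of Theorem \ref{thm:aff_ext}, combined with the mosaic--resolvability correspondence (Lemma \ref{lem:mos_res}) and Stinson's characterization of optimally universal hash functions (Lemma \ref{lem:Stinson_lemma}).

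First I would use the resolvability of the given BIBD$(v,k,\lambda)$ $D$ to write its block index set as $\mc H\times\mc A$, where $\mc H$ indexes the parallel classes and $\mc A$ indexes the $v/k$ blocks of each parallel class (these are all nonempty and of size $k$ since $D$ is a BIBD, so $\abs{\mc A}=v/k$ is a positive integer). By Lemma \ref{lem:mos_res} this yields a mosaic $M$ on the point set $\mc X$ with block index set $\mc H$ and $\Sigma M=D$. Let $g:=f_M:\mc X\times\mc H\to\mc A$ be the function determined by $M$. By Lemma \ref{lem:Stinson_lemma}(2), $g$ is an $\varepsilon$-AU hash function, where $\varepsilon$ is the optimal value for the cardinalities $\abs{\mc X},\abs{\mc A}$.

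Next I would equip the finite set $\mc A$ with an arbitrary quasigroup structure --- for instance that of the cyclic group $\mbb{Z}_{\abs{\mc A}}$ --- and form the seed extension $\hat g:\mc X\times(\mc H\times\mc A)\to\mc A$ of $g$. By Theorem \ref{thm:aff_ext}, $\hat g$ is an $\varepsilon$-ACFU hash function and every member of $M(\hat g)$ is isomorphic to $\Sigma M(g)$. Since the mosaic--function correspondence is a bijection we have $M(g)=M$, hence $\Sigma M(g)=\Sigma M=D$, so every member of $M(\hat g)$ is isomorphic to $D$. Isomorphic incidence structures share the same BIBD parameters, so $M(\hat g)$ is a mosaic of BIBD$(v,k,\lambda)$ each of whose members is isomorphic to $D$, which is precisely the claim. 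As a consistency check one may observe that $\varepsilon$ remains the optimal value for the pair $\abs{\mc X},\abs{\mc A}$, so $\hat g$ is in fact an OCFU hash function, in agreement with the characterization theorem of OCFU hash functions.

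The argument is essentially bookkeeping and I do not expect a genuine obstacle; the only point deserving care is keeping the index sets aligned --- ensuring that the set $\mc A$ which indexes the members of a parallel class of $D$ is the very set that serves as the value set of $g$ and thus carries the quasigroup structure --- and invoking the isomorphism assertion of Theorem \ref{thm:aff_ext} to identify each member of $M(\hat g)$ with $\Sigma M(g)$, which Lemma \ref{lem:mos_res} identifies with the original design $D$.
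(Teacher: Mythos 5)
Your proposal is correct and follows exactly the route the paper intends: the corollary is stated as an immediate consequence of Theorem \ref{thm:aff_ext}, obtained by passing from the resolvable design $D$ to the mosaic $M$ of Lemma \ref{lem:mos_res}, forming the seed extension of $f_M$ with respect to any quasigroup structure on $\mc A$, and invoking the isomorphism of each member of $M(\hat g)$ with $\Sigma M(g)=D$. The appeal to Lemma \ref{lem:Stinson_lemma}(2) is harmless but not needed, since the isomorphism assertion of Theorem \ref{thm:aff_ext} alone already forces every member to be a BIBD$(v,k,\lambda)$.
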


The ACFU hash functions from Examples \ref{ex:aff_OCFU}, \ref{ex:Denniston} and \ref{ex:TD} can be constructed from AU hash functions in this way. Note that if the OU hash function $g$ satisfies equality in \eqref{eq:AU_seed_size}, then $\hat g$ necessarily satisfies equality in \eqref{eq:seed_size_lb_OCFU}. If $\varepsilon$ is strictly larger than the optimal one, an $\varepsilon$-AU hash function can almost never produce a seed-optimal $\hat g$. Equality in \eqref{eq:seed_size_lb_variance} would only be possible if the $\abs{\mc A}$-fold multiple of the right-hand side of \eqref{eq:AU_seed_size} were smaller than the right-hand side of \eqref{eq:seed_size_lb_variance}, but this can only hold for the trivial case of $\varepsilon=1$. Equality in \eqref{eq:seed_size_lb_simple}, i.e., the relation $\abs{\mc S}=\abs{\mc A}/\varepsilon$, would require $\abs{\mc H}=1/\varepsilon$. By Remark \ref{rem:AU_simple}, this requires that \eqref{eq:AU_seed_size} is satisfied as well and that $\abs{\mc X}=\abs{\mc A}^2$. By Lemma \ref{lem:AU_eq_bed}, the corresponding $\Sigma M(f)$ must be an affine plane.

\begin{rem}\label{rem:extension}
    Assume $\mc X$ and $\mc A$ are groups and that the function $g:\mc X\times\mc H\to\mc A$ is a group homomorphism in the first argument for every fixed $h\in\mc H$. In addition, assume that 
    \begin{equation}\label{eq:AUtoASU_Groups}
    	\abs{\{h:g(x,h)=\alpha\}}\leq\varepsilon\abs{\mc H}
    \end{equation}
    for every $\alpha\in\mc A$ and every $x\in\mc X\setminus\{e_{\mc X}\}$, where $e_{\mc X}$ is the neutral element of $\mc X$. Then $g$ is an $\varepsilon$-AU hash function, since if $\alpha$ is the neutral element of $\mc A$, the left-hand side of \eqref{eq:AUtoASU_Groups} is the same as $\abs{\{h:g(x_1,h)=g(x_2,h)\}}$ for any $x_1,x_2$ such that $x_1x_2^{-1}=x$. Moreover, $\hat g$ is an $\varepsilon$-ASU hash function, where $\hat g$ is constructed with respect to the given group structure on $\mc A$, which can be seen by proceeding analogously to the proof of Theorem \ref{thm:aff_ext} and using \eqref{eq:AUtoASU_Groups}. 
    
    This statement was proved by Krawczyk in \cite{Krawczyk_LFSR_UH}; he calls $g$ \textit{$\varepsilon$-balanced} if it satisfies \eqref{eq:AUtoASU_Groups}. This construction of ASU hash functions has been applied frequently in the literature. For instance, Stinson constructs a $1/\abs{\mc A}$-ASU hash function in this way in the proof of \cite[Theorem 5.2]{Stinson_UH_AC}. Two specific examples are given in Examples \ref{ex:Toeplitz} and \ref{ex:Hay_fct}.
\end{rem}

\begin{ex}\label{ex:Toeplitz}
    In an $m\times n$ Toeplitz matrix $T=(t_{ij})$ over the field $\mbb F_q$ of size $q$, the entry $t_{ij}$ only depends on $i-j$. Thus $T=T_h$ is determined by the vector $h$ of the $m+n-1$ entries in the first row and first column. Define
    \[
        g:\mbb F_q^n\times F_q^{n+m-1}\to\mbb F_q^m,
        \quad g(x,h)=T_hx.
    \]
    It was shown in \cite[Claim 2.2]{MNT_complexity_UH} that $g$ satisfies \eqref{eq:AUtoASU_Groups} with $\varepsilon=1/\abs{\mc A}$ and that $\hat g$ is  an $\varepsilon$-ASU hash function. In terms of seed length, both $g$ and $\hat g$ are suboptimal.
\end{ex}

\begin{ex}\label{ex:Hay_fct}
    For any prime power $q$, define the function 
    \[
        g:\mbb F_{q^n}\times\mbb F_{q^n}\to\mbb F_q^m,
        \quad g(x,h)=(hx)_m,
    \]
    where $(hx)_m$ is the vector consisting of the first $m$ components of a representation of $hx$ as an $n$-dimensional vector over $\mbb F_q$. This $g$ satisfies \eqref{eq:AUtoASU_Groups} for $\varepsilon=q^{-m}=1/\abs{\mc A}$ and is linear in $x$ with $x$ regarded as an element of $\mbb F_q^n$ for every $h$, hence $\hat g$ is a $1/\abs{\mc A}$-ASU hash function. For $q$ prime, $\hat g$ was already defined in \cite{CW_UH} and recognized as a $1/\abs{\mc A}$-ASU hash function in \cite{WC_UH}. Stinson also uses it in \cite[Theorem 5.2]{Stinson_UH_AC} to construct an ASU hash function.
    
    By excluding $h=0$, one obtains a function $g_*$ for which all preimages $\{x:g_*(h,x)=\alpha\}$ have the same size. In fact, this modification makes $g_*$ an OU hash function \cite[Appendix B]{BT_poly_time}. Hence, the corresponding $\hat g_*$ is another example of an OCFU hash function, although it satisfies neither \eqref{eq:seed_size_lb_OCFU} nor \eqref{eq:seed_size_lb_simple}. $\hat g_*$ also is an $\varepsilon$-ASU hash function, but with $\varepsilon=q^{n-m}/(q^n-1)>q^{-m}=1/\abs{\mc A}$.  
    
    The function $g_*$ itself was used in \cite{BTV_published,BT_poly_time} to show semantic security for symmetric wiretap channels, an application related to the one we present in Section \ref{sect:motivation}. A variant of $\hat g_*$, but with a larger seed, was used in \cite[Remark 16, Lemma 21]{HayMat} in the context of information-theoretic security. From the practical viewpoint, $g$ and $g_*$ as well as their extended versions $\hat g,\hat g_*$ have the advantage that $\abs{\mc A}=q^m$ can be any power of $q$ between $1$ and $\abs{\mc X}=q^n$. 
\end{ex}

It is not only possible to turn an AU hash function into an ACFU hash function by extending its seed set. Let $g:\mc Y\times\mc S\to\mc A$ be an $\varepsilon$-ASU hash function  and equip $\mc A$ with a quasigroup product denoted by $\circ$. Setting $\mc X=\mc Y\times\mc A$, we define the \textit{point extension} of $g$ by $\check g:\mc X\times\mc S\to\mc A$,
\[
	\check g(y,\beta;s)=g(y,s)\circ\beta.
\]
To connect this with the seed extension of functions, for any function $g:\mc Y\times\mc S\to\mc A$, let $\tilde g:\mc S\times\mc Y\to\mc A$ be defined by $\tilde g(s,y)=g(y,s)$ (the tilde makes sense since $M(\tilde g)=\tilde M(g)$, the dual of the mosaic of $g$.) Then we have the relation 
\[
	\tilde{\check g}=\hat{\tilde g}.
\]
The point extension of functions $g$ where $\Sigma\tilde M(g)$ is a resolvable BIBD or GDD was already considered in \cite{mosaics}.

\begin{thm}\label{thm:block_set_ext}
	A function $g:\mc Y\times\mc S\to\mc A$ is an $\varepsilon$-ASU hash function if and only if its point extension $\check g$ is an $\varepsilon$-ACFU hash function. Each member of $M(\check g)$ is isomorphic to the dual of $\Sigma\tilde M(g)$, where $\tilde M(g)$ is the dual of $M(g)$.
\end{thm}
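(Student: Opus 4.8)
The strategy is to reduce everything to the already-proved Theorem \ref{thm:aff_ext} via the duality correspondence $g \mapsto \tilde g$, rather than redoing the counting by hand. First I would unwind the definitions carefully. The crucial observation is the identity $\tilde{\check g} = \hat{\tilde g}$ already stated in the text: the point extension of $g$ is, after swapping the two arguments, exactly the seed extension of $\tilde g$. So I would begin by verifying this identity directly from the formulas $\check g(y,\beta;s) = g(y,s)\circ\beta$ and $\hat{\tilde g}(s; y,\beta) = \tilde g(s,y)\circ\beta = g(y,s)\circ\beta$, noting that both have domain $\mc S \times (\mc Y \times \mc A)$ with value set $\mc A$. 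This is immediate but worth stating explicitly.

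Next I would translate the hash-function properties across the tilde. The key dictionary entries are: (i) $g:\mc Y\times\mc S\to\mc A$ is $\varepsilon$-ASU if and only if $\tilde g:\mc S\times\mc Y\to\mc A$ is $\varepsilon$-AU — wait, this is \emph{not} true in general, so I need to be more careful. Actually the right reading is: the ASU conditions (ASU1) and (ASU2) on $g$, written in terms of the blocks $B_{s,\alpha} = \{y : g(y,s)=\alpha\}$ (in the notation \eqref{eq:dual_blocks} applied to $g$), say precisely that each $y$ lies in $\abs{\mc S}/\abs{\mc A}$ of the sets $\{s: g(y,s)=\alpha\}$ and that for distinct $y,y'$ and any $\alpha,\alpha'$, $\abs{\{s: g(y,s)=\alpha, g(y',s)=\alpha'\}} \le \varepsilon\abs{\mc S}/\abs{\mc A}$. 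This is exactly the statement that $\tilde g$, viewed as a map $\mc S\times\mc Y\to\mc A$, has the property that the mosaic $M(\tilde g) = \tilde M(g)$ has members which are ``$\varepsilon$-flat'' on pairs of blocks in the sense needed for the seed-extension argument. Rather than inventing a name, I would instead apply Theorem \ref{thm:aff_ext} in the following repackaged form: the hypothesis of that theorem, that $\hat g$ is $\varepsilon$-ACFU, is equivalent via the same swap to $\check g$ being $\varepsilon$-ACFU (since ACFU properties are phrased symmetrically enough — (ACFU1) and (ACFU2) for $\check g$ with seed in $\mc S$, versus for $\hat{\tilde g}$ with seed in $\mc S$ — these are literally the same conditions on the same function). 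So the cleanest route is: \emph{apply Theorem \ref{thm:aff_ext} to $\tilde g$ in place of $g$}, giving that $\tilde g$ is an $\varepsilon$-AU hash function if and only if $\hat{\tilde g} = \tilde{\check g}$ is an $\varepsilon$-ACFU hash function; then observe that $\tilde g$ being $\varepsilon$-AU is literally the conjunction of (ASU1) and (ASU2) for $g$... no — $\varepsilon$-AU is weaker than ASU. So this reduction loses the (ASU1) content.

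The honest resolution, and the main point requiring care, is that I should not try to route through Theorem \ref{thm:aff_ext} as a black box but rather mirror its \emph{proof}. The substantive content is the counting identity analogous to \eqref{eq:reg_gen}: for all $y,y'\in\mc Y$ and all $\alpha$, as $\beta$ ranges over $\mc A$ the solution $\alpha/\beta$ of $\gamma\circ\beta=\alpha$ ranges over all of $\mc A$, hence
\[
    \abs{\{(\beta,s):\check g(y,\beta;s)=\check g(y',\beta;s)=\alpha\}}
    = \sum_{\alpha'\in\mc A}\abs{\{s: g(y,s)=\alpha',\, g(y',s)=\alpha'\circ(\beta\text{ anything})\}}\,,
\]
which I would unpack correctly to: for fixed $y\ne y'$ and fixed $\alpha$, this count equals $\sum_{\beta}\abs{\{s: g(y,s)=\alpha/\beta,\ g(y',s)=\alpha/\beta\}}$ — no, again wrong, because $\check g(y',\beta;s)=\alpha$ forces $g(y',s)=\alpha/\beta$ too, the \emph{same} $\alpha/\beta$. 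So the count is $\sum_\beta \abs{\{s: g(y,s)=g(y',s)=\alpha/\beta\}} = \sum_{\alpha'\in\mc A}\abs{\{s: g(y,s)=g(y',s)=\alpha'\}} = \abs{\{s: g(y,s)=g(y',s)\}}$. That would only give the AU-type bound. The resolution is that for ACFU we need a bound on $\abs{\{s:\check g(y,\beta;s)=\check g(y',\beta;s)\}}$ with $(\beta)$ now \emph{part of the point}, so $y,\beta$ and $y',\beta'$ may have $\beta\ne\beta'$: then $\check g(y,\beta;s)=\check g(y',\beta';s)=\alpha$ means $g(y,s)=\alpha/\beta$ and $g(y',s)=\alpha/\beta'$, two \emph{distinct} values when $\beta\ne\beta'$, and (ASU2) bounds $\abs{\{s: g(y,s)=\alpha/\beta,\ g(y',s)=\alpha/\beta'\}}\le\varepsilon\abs{\mc S}/\abs{\mc A}$ directly; when $\beta=\beta'$ but $y\ne y'$ we use (ASU2) with $\alpha'=\alpha$ (diagonal case). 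And (ACFU1) for $\check g$ follows from (ASU1) for $g$ since as $s$ varies and $\alpha$ is fixed, $\check g(y,\beta;s)=\alpha\iff g(y,s)=\alpha/\beta$, and $\abs{\{s: g(y,s)=\alpha/\beta\}}=\abs{\mc S}/\abs{\mc A}$. For the converse, specializing to $\beta=\beta'$ recovers (ASU2) for equal outputs, and varying $\beta,\beta'$ independently recovers the full (ASU2); (ASU1) for $g$ comes from (ACFU1) for $\check g$ by fixing $\beta$. So the main obstacle — and it is a bookkeeping obstacle, not a conceptual one — is keeping straight which pairs of points/outputs correspond to the diagonal versus off-diagonal instances of (ASU2), i.e. that ACFU2 for $\check g$ over the enlarged point set $\mc Y\times\mc A$ is exactly equivalent to ASU2 for $g$ over $\mc Y$.

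Finally, for the isomorphism claim: the $\alpha$-th member $D_\alpha$ of $M(\check g)$ has blocks $B_{s;\alpha} = \{(y,\beta): \check g(y,\beta;s)=\alpha\} = \{(y,\beta): g(y,s)=\alpha/\beta\}$ for $s\in\mc S$. As $s$ ranges over $\mc S$, these range over sets of the form $\{(y,\beta): (s,\alpha/\beta)\text{ is ``incident''}\}$; I would identify this with a block of the dual of $\Sigma\tilde M(g)$. Recall $\Sigma\tilde M(g)$ has point set $\mc S$ and block index set $\mc Y\times\mc A$ (by the definition of the sum of a mosaic, applied to $\tilde M(g)=(\tilde D_\alpha)$), with $s$ incident to $(y,\alpha')$ iff $s$ is incident to $y$ in $\tilde D_{\alpha'}$, i.e. iff $g(y,s)=\alpha'$; so its dual has point set $\mc Y\times\mc A$, block index set $\mc S$, and block at $s$ equal to $\{(y,\alpha'): g(y,s)=\alpha'\}$. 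Composing with the bijection $(y,\beta)\mapsto(y,\alpha/\beta)$ of $\mc Y\times\mc A$ shows $D_\alpha$ is isomorphic to this dual, completing the proof. I would present all of this compactly, citing Theorem \ref{thm:aff_ext}'s proof structure as the template and the identity $\tilde{\check g}=\hat{\tilde g}$ as the organizing fact, while doing the ASU2$\leftrightarrow$ACFU2 translation explicitly since that is where the content lies.
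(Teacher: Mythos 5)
Your proposal is correct and, after the self-corrected false starts, lands on essentially the same argument as the paper: the identity $\abs{\{s:\check g(y,\beta;s)=\check g(y',\beta';s)=\alpha\}}=\abs{\{s:g(y,s)=\alpha/\beta,\,g(y',s)=\alpha/\beta'\}}$ with a case split on $(y,\beta)$ versus $(y',\beta')$ (note only that for $y=y'$, $\beta\neq\beta'$ the set is empty rather than an instance of (ASU2)), and for the converse the choice of $\beta,\beta'$ with $\alpha\circ\beta=\alpha'\circ\beta'$. The sole cosmetic difference is that the paper obtains the isomorphism claim from $\tilde M(\check g)=M(\hat{\tilde g})$ together with Theorem \ref{thm:aff_ext}, whereas you verify it by computing the blocks of $D_\alpha$ and of the dual of $\Sigma\tilde M(g)$ directly and exhibiting the bijection $(y,\beta)\mapsto(y,\alpha/\beta)$.
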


\begin{proof}
    Assume that $g$ is $\varepsilon$-ASU. By the definition of $\check g$,
	\[
		\abs{\{s:\check g(y,\beta;s)=\check g(y',\beta';s)=\alpha\}}
		=\abs{\{s:g(y,s)=\alpha/\beta,g(y',s)=\alpha/\beta'\}}.
	\]
    This expression equals $\abs{\mc S}/\abs{\mc A}$ if $(y,\beta)=(y',\beta')$ by (ASU1). If $y=y'$, but $\beta\neq\beta'$, then it equals zero; if $y\neq y'$, it is upper-bounded by $\varepsilon\abs{\mc S}/\abs{\mc A}$ by (ASU2). Hence, $\check g$ is an $\varepsilon$-ACFU function.
    
    Now assume that $\check g$ is $\varepsilon$-ACFU. For any $y,y'\in\mc Y$ and $\alpha,\alpha'\in\mc A$, consider the set
    \[
    	\{s:g(y,s)=\alpha,g(y',s)=\alpha'\}.
    \]
    For any pair $\beta,\beta'\in\mc A$, this is the same as
    \begin{equation}\label{eq:ASU_generic_set}
    	\{s:g(y,s)\circ\beta=\alpha\circ\beta,g(y',s)\circ\beta'=\alpha'\circ\beta'\}.
    \end{equation}
    If $y=y'$ as well as $\alpha=\alpha'$ and $\beta=\beta'$, this set has the same cardinality as the set 
    \[
    	\{s:\check g(y,\beta;s)=\alpha\circ\beta\}.
    \]
    Thus the property (ACFU1) of $\check g$ gives us (ASU1) for $g$. 
    
    On the other hand, if $y\neq y'$, we can choose $\beta$ and $\beta'$ in such a way that $\alpha\circ\beta=\alpha'\circ\beta'$, so that the set \eqref{eq:ASU_generic_set} has the same cardinality as
    \[
    	\{s:\check g(y,\beta;s)=\check g(y',\beta';s)=\alpha\circ\beta\},
    \]
    which by (ACFU2) is upper-bounded by $\varepsilon\abs{\mc S}/\abs{\mc A}$. This implies (ASU2).    
    
    Finally, we prove the statement about the members of $M(\check g)$. To see this, note that $\tilde M(\check g)=M(\tilde{\check g})=M(\hat{\tilde g})$. By Theorem \ref{thm:aff_ext}, the members of this mosaic are isomorphic to $\Sigma M(\tilde g)=\Sigma\tilde M(g)$.   
\end{proof}

If we are given a resolvable quasi-symmetric BIBD, then we know from Corollary \ref{cor:mos_constr_primal} that one can construct a mosaic of quasi-symmetric BIBDs from this. Taking duals, one gets from a seed-optimal $\varepsilon$-ASU hash function (by Lemma \ref{lem:van_Trung_lem}) to a seed-optimal $\varepsilon$-ACFU hash function (by Corollary \ref{cor:eq_bed_variance}). Theorem \ref{thm:block_set_ext} does this in a single step, and (necessarily) the lower bound \eqref{eq:ASU_seed_eq} transforms into \eqref{eq:seed_size_lb_variance} in the right way. It is also obvious that an $\varepsilon$-ASU $g$ satisfies equality in Lemma \ref{lem:ASU_seed_simple} if and only if $\check g$ satisfies equality in \eqref{eq:seed_size_lb_simple}.

Examples \ref{ex:dual_aff} and \ref{ex:TD} show ACFU hash functions which can be constructed as point extensions of ASU hash functions. The function from Example \ref{ex:TD} can be represented both as the seed extension of an AU hash function and as the point extension of an ASU hash function. In fact, it is the typical example of such a function in the case where $\mc A$ is an abelian group. To see this, we extend Krawczyk's notion of $\varepsilon$-balancedness (see Remark \ref{rem:extension}) to arbitrary functions whose image lies in an abelian group. We say that a function $a:\mc Y\times\mc H\to\mc A$, where $\mc A$ is an abelian group, is $\varepsilon$\textit{-balanced} if for any two distinct $y,y'\in\mc Y$ and any $\beta\in\mc A$, it satisfies
\[
	\abs{\{h\in\mc H:a(y,h)-a(y',h)=\beta\}}\leq\varepsilon\abs{\mc H}.
\]
We also remark that if $f=\hat g_1=\check g_2$ and if it maps into $\mc A$, then it must have the form $f:(\mc Y\times\mc A)\times(\mc H\times\mc A)\to\mc A$.

\begin{prop}
	Let $f$ be an $\varepsilon$-ACFU hash function. Then $f$ can be represented as the seed extension of an $\varepsilon$-AU hash function $g_1:(\mc Y\times\mc A)\times\mc H\to\mc A$ and as the point extension of an $\varepsilon$-ASU hash function $g_2:\mc Y\times(\mc H\times\mc A)\to\mc A$ if and only if there exists an $\varepsilon$-balanced function $a:\mc Y\times\mc H\to\mc A$ such that
	\[
		f(y,\beta;h,\gamma)=a(y,h)+\beta+\gamma.
	\]
\end{prop}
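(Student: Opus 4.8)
The plan is to prove both directions by direct computation, taking the abelian group addition on $\mc A$ as the quasigroup product underlying both the seed extension and the point extension (this is the relevant setting here, since the asserted formula $a(y,h)+\beta+\gamma$ and the definition of $\varepsilon$-balancedness are phrased via the group operation). Recall that with this convention $\hat g_1(y,\beta;h,\gamma)=g_1(y,\beta;h)+\gamma$ and $\check g_2(y,\beta;h,\gamma)=g_2(y;h,\gamma)+\beta$.

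For the ``if'' direction, given an $\varepsilon$-balanced $a:\mc Y\times\mc H\to\mc A$ with $f(y,\beta;h,\gamma)=a(y,h)+\beta+\gamma$, I would set $g_1(y,\beta;h)=a(y,h)+\beta$ and $g_2(y;h,\gamma)=a(y,h)+\gamma$, so that $f=\hat g_1=\check g_2$ holds by inspection. It then remains to check the universality properties. For $g_1$: if $(y,\beta)\neq(y',\beta')$ then $\{h:g_1(y,\beta;h)=g_1(y',\beta';h)\}=\{h:a(y,h)-a(y',h)=\beta'-\beta\}$, which is empty when $y=y'$ and has at most $\varepsilon\abs{\mc H}$ elements when $y\neq y'$ by $\varepsilon$-balancedness, so $g_1$ is $\varepsilon$-AU. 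For $g_2$: (ASU1) holds since for each $h$ there is exactly one $\gamma$ with $a(y,h)+\gamma=\alpha$, giving $\abs{\mc H}=\abs{\mc H\times\mc A}/\abs{\mc A}$ solutions; and (ASU2) holds since $\{(h,\gamma):a(y,h)+\gamma=\alpha,\,a(y',h)+\gamma=\alpha'\}$ is in bijection (via $h\mapsto(h,\alpha-a(y,h))$) with $\{h:a(y,h)-a(y',h)=\alpha-\alpha'\}$, which for $y\neq y'$ has size at most $\varepsilon\abs{\mc H}=\varepsilon\abs{\mc H\times\mc A}/\abs{\mc A}$, so $g_2$ is $\varepsilon$-ASU. (That $f$ itself is $\varepsilon$-ACFU is then automatic from Theorem \ref{thm:aff_ext}.)

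For the ``only if'' direction, suppose $f=\hat g_1=\check g_2$ with $g_1$ $\varepsilon$-AU and $g_2$ $\varepsilon$-ASU. From $f=\hat g_1$ I read off $f(y,\beta;h,\gamma)=g_1(y,\beta;h)+\gamma$, hence (at $\gamma=0$) $g_1(y,\beta;h)=f(y,\beta;h,0)$ and $f(y,\beta;h,\gamma)=f(y,\beta;h,0)+\gamma$; from $f=\check g_2$ I read off $f(y,\beta;h,\gamma)=f(y,0;h,\gamma)+\beta$. Writing $a(y,h):=f(y,0;h,0)$, the second identity at $\gamma=0$ gives $f(y,\beta;h,0)=a(y,h)+\beta$, and substituting into the first gives $f(y,\beta;h,\gamma)=a(y,h)+\beta+\gamma$ as well as $g_1(y,\beta;h)=a(y,h)+\beta$. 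Finally, $a$ is $\varepsilon$-balanced: for distinct $y,y'\in\mc Y$ and any $\delta\in\mc A$ the points $(y,0)$ and $(y',\delta)$ are distinct, so the $\varepsilon$-AU property of $g_1$ yields $\abs{\{h:a(y,h)-a(y',h)=\delta\}}=\abs{\{h:g_1(y,0;h)=g_1(y',\delta;h)\}}\leq\varepsilon\abs{\mc H}$.

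I do not anticipate a serious obstacle: everything is bookkeeping with the abelian group operation. The two points that warrant care are, in the ``only if'' direction, recognizing that the seed-extension representation forces $f(y,\beta;h,\gamma)-\gamma$ to be independent of $\gamma$ while the point-extension representation forces $f(y,\beta;h,\gamma)-\beta$ to be independent of $\beta$ — and that these constraints together pin down the additive form — and then choosing the pair $(y,0),(y',\delta)$ of distinct points that converts $\varepsilon$-universality of $g_1$ into exactly the $\varepsilon$-balancedness condition on $a$. Incidentally, the argument shows that the $\varepsilon$-ASU hypothesis on $g_2$ is not actually used in the ``only if'' direction beyond the bare existence of a point-extension representation of $f$.
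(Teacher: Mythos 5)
Your proof is correct and follows essentially the same route as the paper's: equate the two extension representations to force the additive form $a(y,h)+\beta+\gamma$, then translate the $\varepsilon$-AU property of $g_1$ on pairs $(y,\beta)\neq(y',\beta')$ with $y\neq y'$ into $\varepsilon$-balancedness of $a$, with the converse being direct verification (which the paper leaves to the reader and you spell out correctly). Your closing observation that the ASU hypothesis on $g_2$ is not needed in the ``only if'' direction is consistent with Theorem \ref{thm:block_set_ext}, which makes that hypothesis automatic once $f$ is $\varepsilon$-ACFU and admits a point-extension representation.
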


\begin{proof}
	If $f=\hat g_1=\check g_2$, then for all $y,h,\beta,\gamma$,
	\[
		g_1(y,\beta;h)=g_2(y;h,\gamma)+\beta-\gamma.
	\]
    Since the left-hand side does not depend on $\gamma$, there is an $a(y,h)\in\mc A$ such that $g_1(y,\beta;h)=a(y,h)+\beta$, which implies that $f$ has the claimed form.
    
    We check that the function $a$ is $\varepsilon$-balanced. Let $y\neq y'$. Since $g_1$ is an $\varepsilon$-AU hash function, for any $\beta,\beta'$,
    \[
    	\abs{\{h:a(y,h)+\beta=a(y',h)+\beta'\}}\leq\varepsilon\abs{\mc H}.
    \]
    This inequality is precisely the definition of $\varepsilon$-balancedness since $\beta'-\beta$ can assume any value in $\mc A$.
    
    It is straightforward to check the converse. 
\end{proof}

Clearly, in Example \ref{ex:TD}, the function $a$ is multiplication of elements of $\mc R$ and of $\mbb F_q$.

\subsection{Concatenation}

We can show a result similar to the results on the concatenation of almost (strongly) universal hash functions in \cite{Stinson_UH_AC}.

\begin{prop}
    If $f_1:\mc X_1\times\mc S_1\to\mc A_1$ is an $\varepsilon_1$-ASU hash function and $f_2:\mc A_1\times\mc S_2\to\mc A_2$ is an $\varepsilon_2$-ACFU hash function, then the function $f:\mc X_1\times(\mc S_1\times\mc S_2)\to\mc A_2$ defined by
    \[
        f(x_1;s_1,s_2)=f_2(f_1(x_1,s_1),s_2)
    \]
    is an $(\varepsilon_1\varepsilon_2(\abs{\mc A_1}-1)+\varepsilon_1)$-ACFU hash function.
\end{prop}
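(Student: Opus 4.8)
The plan is to verify the two ACFU properties for $f$ directly, by conditioning on the intermediate value $f_1(x_1,s_1)\in\mc A_1$ and then invoking the defining inequalities of $f_1$ (as an $\varepsilon_1$-ASU hash function) and $f_2$ (as an $\varepsilon_2$-ACFU hash function). First I would check (ACFU1): fix $x_1\in\mc X_1$ and $\alpha\in\mc A_2$, and write
\[
    \abs{\{(s_1,s_2):f_2(f_1(x_1,s_1),s_2)=\alpha\}}
    =\sum_{s_1\in\mc S_1}\abs{\{s_2:f_2(f_1(x_1,s_1),s_2)=\alpha\}}.
\]
By (ACFU1) for $f_2$, each inner term equals $\abs{\mc S_2}/\abs{\mc A_2}$ regardless of the value $f_1(x_1,s_1)$, so the sum is $\abs{\mc S_1}\abs{\mc S_2}/\abs{\mc A_2}=\abs{\mc S_1\times\mc S_2}/\abs{\mc A_2}$, as required.

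Next, for (ACFU2), fix distinct $x_1,x_1'\in\mc X_1$ and $\alpha\in\mc A_2$. I would partition the count according to whether $f_1(x_1,s_1)=f_1(x_1',s_1)$ or not:
\begin{align*}
    &\abs{\{(s_1,s_2):f(x_1;s_1,s_2)=f(x_1';s_1,s_2)=\alpha\}}\\
    &=\sum_{s_1:f_1(x_1,s_1)=f_1(x_1',s_1)}\abs{\{s_2:f_2(f_1(x_1,s_1),s_2)=\alpha\}}\\
    &\quad+\sum_{s_1:f_1(x_1,s_1)\neq f_1(x_1',s_1)}\abs{\{s_2:f_2(f_1(x_1,s_1),s_2)=f_2(f_1(x_1',s_1),s_2)=\alpha\}}.
\end{align*}
For the first sum, each inner term is $\abs{\mc S_2}/\abs{\mc A_2}$ by (ACFU1) for $f_2$, and the number of $s_1$ with $f_1(x_1,s_1)=f_1(x_1',s_1)$ is at most $\varepsilon_1\abs{\mc S_1}$ — here I use that $f_1$, being $\varepsilon_1$-ASU, is in particular $\varepsilon_1$-AU (or directly: summing (ASU2) over $\alpha'=\alpha''$). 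So the first sum is at most $\varepsilon_1\abs{\mc S_1}\cdot\abs{\mc S_2}/\abs{\mc A_2}$. For the second sum, each inner term is at most $\varepsilon_2\abs{\mc S_2}/\abs{\mc A_2}$ by (ACFU2) for $f_2$ applied to the distinct elements $f_1(x_1,s_1),f_1(x_1',s_1)\in\mc A_1$; the key point is to bound the number of contributing $s_1$. For this I would use (ASU2) for $f_1$: for each ordered pair of distinct $\beta,\beta'\in\mc A_1$, the number of $s_1$ with $f_1(x_1,s_1)=\beta$ and $f_1(x_1',s_1)=\beta'$ is at most $\varepsilon_1\abs{\mc S_1}/\abs{\mc A_1}$, and there are $\abs{\mc A_1}(\abs{\mc A_1}-1)$ such ordered pairs, giving at most $\varepsilon_1\abs{\mc S_1}(\abs{\mc A_1}-1)$ values of $s_1$ in the second sum. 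Hence the second sum is at most $\varepsilon_1\varepsilon_2(\abs{\mc A_1}-1)\abs{\mc S_1}\abs{\mc S_2}/\abs{\mc A_2}$.

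Adding the two bounds gives
\[
    \abs{\{(s_1,s_2):f(x_1;s_1,s_2)=f(x_1';s_1,s_2)=\alpha\}}
    \leq\bigl(\varepsilon_1\varepsilon_2(\abs{\mc A_1}-1)+\varepsilon_1\bigr)\frac{\abs{\mc S_1}\abs{\mc S_2}}{\abs{\mc A_2}},
\]
which is exactly (ACFU2) for $f$ with the claimed parameter, since $\abs{\mc S_1\times\mc S_2}=\abs{\mc S_1}\abs{\mc S_2}$. The only mild subtlety — the ``main obstacle'', such as it is — is getting the counting in the second sum right: one must be careful that the bound $\varepsilon_1\abs{\mc S_1}/\abs{\mc A_1}$ from (ASU2) is per ordered pair $(\beta,\beta')$ and to sum over only the $\abs{\mc A_1}(\abs{\mc A_1}-1)$ pairs with $\beta\neq\beta'$ (the diagonal pairs are precisely what feeds the first sum), so that the two contributions do not double-count and combine to the stated $\varepsilon_1\varepsilon_2(\abs{\mc A_1}-1)+\varepsilon_1$. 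Everything else is routine bookkeeping.
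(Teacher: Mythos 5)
Your proposal is correct and follows essentially the same route as the paper: (ACFU1) by summing the constant inner count $\abs{\mc S_2}/\abs{\mc A_2}$ over $s_1$, and (ACFU2) by splitting on whether $f_1(x_1,s_1)=f_1(x_1',s_1)$, bounding the diagonal case via $\varepsilon_1\abs{\mc S_1}$ and the off-diagonal case via (ASU2) over the $\abs{\mc A_1}(\abs{\mc A_1}-1)$ ordered pairs together with (ACFU2) for $f_2$. The bookkeeping matches the paper's bound exactly.
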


\begin{proof}
    Let $x_1\in\mc X_1$ and $\alpha_2\in\mc A_2$. By (ACFU1), for each $\alpha_1\in\mc A_1$, the number of $s_2\in\mc S_2$ for which $f_2(\alpha_1,s_2)=\alpha_1$ equals $\abs{\mc S_2}/\abs{\mc A_2}$. The number of $s_1\in\mc S_1$ for which $f_1(x_1,s_1)=\alpha_1$ equals $\abs{\mc S_1}/\abs{\mc A_1}$ by (ASU1). Hence
    \begin{align*}
        \abs{\{(s_1,s_2):f(x_1;s_1,s_2)=\alpha_2\}}
        &=\frac{\abs{\mc S_1}}{\abs{\mc A_1}}\cdot\frac{\abs{\mc S_2}}{\abs{\mc A_2}}\cdot\abs{\mc A_1}
        =\frac{\abs{\mc S_1}\abs{\mc S_2}}{\abs{\mc A_2}},
    \end{align*}
    proving that $f$ satisfies (ACFU1). 
    
    To check (ACFU2), let $\alpha_2\in\mc A_2$ and choose distinct $x_1,x_1'\in\mc X_1$. We need to consider two cases. Suppose first that $f_1(x_1,s_1)\neq f_1(x_1',s_1)$. Then, $f_2(\cdot,s_2)$ needs to map these two distinct values to $\alpha_2$, and this is possible for at most $\varepsilon_2\abs{\mc S_2}/\abs{\mc A_2}$ values of $s_2$ by (ACFU2). If $f_1(s_1,x_1)=f_1(x_1',s_1)$, then $\abs{\mc S_2}/\abs{\mc A_2}$ values of $s_2$ are possible. Since $f_1$ is an ASU hash function, for any pair $\alpha_1,\alpha_1'$, there are at most $\varepsilon_1\abs{\mc S_1}/\abs{\mc A_1}$ possible $s_1$ such that  $f_1(x_1,s_1)=\alpha_1$ and $f_1(x_1',s_1)=\alpha_1'$. Therefore
    \begin{align*}
        &\abs{\{(s_1,s_2):f(x_1;s_1,s_2)= f(x_1';s_1,s_2)=\alpha_2\}}\\
        &\leq\frac{\varepsilon_1\abs{\mc S_1}}{\abs{\mc A_1}}\cdot\frac{\varepsilon_2\abs{\mc S_2}}{\abs{\mc A_2}}\cdot\abs{\mc A_1}(\abs{\mc A_1}-1)
        +\frac{\varepsilon_1\abs{\mc S_1}}{\abs{\mc A_1}}\cdot\frac{\abs{\mc S_2}}{\abs{\mc A_2}}\cdot\abs{\mc A_1}\\
        &=\bigl(\varepsilon_1\varepsilon_2(\abs{\mc A_1}-1)+\varepsilon_1\bigr)\frac{\abs{\mc S_1}\abs{\mc S_2}}{\abs{\mc A_2}}.
    \end{align*}
    This completes the proof.
\end{proof}

If we take $f_1$ to be an $\varepsilon$-ASU hash function with minimal $\varepsilon$, i.e., $\varepsilon=1/\abs{\mc A_1}$, and $f_2$ to be an OCFU hash function, then we obtain an $1/\abs{\mc A}$-ACFU hash function. However, the resulting seed set will always be larger than any of the bounds from Theorem \ref{thm:seed_size_lb}. We have not checked the situation for other $\varepsilon$, but we think it unlikely that this method of concatenation yields ACFU hash functions with a minimal seed.

The lemma gives the possibility of constructing new ACFU hash functions, e.g., with larger $\varepsilon$ than in most of the examples we have seen so far. (Although we do not have as yet any application for such functions.) For instance, Stinson \cite{Stinson_UH_AC} gives ASU hash functions with larger $\varepsilon$ which can be used as the first function in concatenation.

\section{Motivation from privacy amplification}\label{sect:motivation}

In this section, we sketch how ACFU hash functions can be used in privacy amplification with the goal of establishing a uniformly distributed key between two parties whose values are indistinguishable to an adversary.

The concept of privacy amplification goes back to Bennett, Brassard and Robert \cite{BBR_PA} and Bennett, Brassard, Cr\'epeau and Maurer \cite{BBCM_gen_PA}. In addition to the classical setup described in the introduction and below, it also plays an analogous role in quantum key distribution \cite{Assche_QKD}. Quite a lot of research on privacy amplification has been made in classical information theory, and various suggestions how to measure the security of the secret key have been considered \cite{MauW_extractors, ChouBloch_separability, Hay_almost_dual_UHF, Renes_PA_duality_channelcoding}. We are going to use a very strict security measure, which appears first in a special setting related to the quantum BB84 protocol \cite{Hay_BB84} and was not considered afterwards until recently \cite{mosaics}, and for which $\varepsilon$-ACFU hash functions are a useful tool. Below, we will just present a simplified version of privacy amplification; for more details, see, e.g., \cite{mosaics}. An improved bound on the security of the method is given in \cite{ISIT23}. 

Before we formalize the problem of privacy amplification, we recall the connection between a function $f:\mc X\times\mc S\to\mc A$ and its mosaic $M(f)=(D_\alpha)_{\alpha\in\mc A}$ on $\mc X$ with block index set $\mc S$. For each $\alpha$, we take $N_\alpha$ to be the incidence matrix of $D_\alpha$; in other words, $N_\alpha$ is a 01-matrix with rows indexed by $\mc X$ and columns by $\mc S$, and where
\[
    N_\alpha(x,s)=1\quad\text{if and only if}\quad f(x,s)=\alpha.
\]
Clearly, if $J$ denotes the all-ones matrix, then since $M(f)$ is a mosaic of incidence structures,
\begin{equation}\label{eq:mosaic_matrix}
    \sum_\alpha N_\alpha=J.
\end{equation}

The function $f$ is an $\varepsilon$-ACFU hash function if and only if 
\begin{enumerate}
    \item denoting the all-ones vector of appropriate dimension by $j$,
    \begin{equation}\label{eq:const_comp_matrixform}
        N_\alpha j=\frac{\abs{\mc S}}{\abs{\mc A}}j,
    \end{equation}
    \item and for $x\neq x'$,
    \[
        (N_\alpha N_\alpha^T)(x,x')\leq\frac{\varepsilon\abs{\mc S}}{\abs{\mc A}}.
    \]
\end{enumerate}
In particular, for any nonnegative vector $p\in\mbb R^{\mc X}$,
\begin{align}\label{eq:bil_form}
    p^TN_\alpha N_\alpha^Tp
    &\leq \frac{\abs{\mc S}}{\abs{\mc A}}p^Tp+\frac{\varepsilon\abs{\mc S}}{\abs{\mc A}}\bigl((p^Tj)^2-p^Tp\bigr)\notag\\
    &=\frac{\abs{\mc S}}{\abs{\mc A}}\bigl((1-\varepsilon)p^Tp+\varepsilon (p^Tj)^2\bigr). 
\end{align}
This inequality will be the key in the application of $\varepsilon$-ACFU hash functions to privacy amplification.

Now assume that $X$ and $Z$ are random variables on the finite alphabets $\mc X$ and $\mc Z$, respectively, with joint probability vector $p_{XZ}$. ``Privacy amplification'' means that we want to transform $X$ into another random variable $A$ whose probability distribution is close to uniform and about which an adversary observing $Z$ knows as little as possible. For this transformation, we use an $\varepsilon$-ACFU hash function $f:\mc X\times\mc S\to\mc A$ with an associated family $(N_\alpha)_{\alpha\in\mc A}$ of 01-matrices. For the second argument of $f$, we choose an input uniformly at random, which may also be known to the adversary. This setting gives us a joint probability distribution on $\mc X\times\mc Z\times\mc S\times\mc A$ for the random variables $X,Z,S,A$,
\[
	p_{XZSA}(x,z,s,\alpha)=p_{XZ}(x,z)\cdot\frac{1}{\abs{\mc S}}\cdot N_\alpha(x,s).
\]

For any $z\in\mc Z$, we define the vector $p_z\in\mbb R^{\mc X}$ by 
\[
	p_z(x)=p_{XZ}(x,z),
\]
such that $p_z^Tj=p_Z(z)$. The joint distribution of $Z$, $S$ and $A$ is
\begin{equation}\label{eq:jointdistr}
	p_{ZSA}(z,s,\alpha)
	=\frac{1}{\abs{\mc S}}\sum_xp_{XZ}(x,z)N_\alpha(x,s)=\frac{1}{\abs{\mc S}}(p_z^TN_\alpha)(s).
\end{equation}
In particular, by \eqref{eq:const_comp_matrixform}, 
\begin{equation}\label{eq:A_Z_distr}
	p_{ZA}(z,\alpha)=p_z^Tj\cdot\frac{1}{\abs{\mc A}}=p_Z(z)\cdot\frac{1}{\abs{\mc A}}.
\end{equation}
Hence $Z$ and $A$ are stochastically independent, and we even obtain a  uniform distribution for $A$ (not just an approximation). 

Before we can show that the adversary knows little about $A$, we have to define what this should mean. We impose the strong requirement that the adversary, knowing $S$ and $Z$, should not be able to distinguish any two values $\alpha,\alpha'$ of which it knows that one is the true one. In order to formalize this, we recall the definition of conditional probabilities. If $Y_1,Y_2$ are any random variables with joint probability distribution $p_{Y_1Y_2}$, then the conditional probability of $Y_1$ given $Y_2=y_2$ is defined by
\[
	p_{Y_1\vert Y_2=y_2}(y_1)=p_{Y_1\vert Y_2}(y_1\vert y_2)=\frac{p_{Y_1Y_2}(y_1,y_2)}{p_{Y_2}(y_2)}
\]
if $p_{Y_2}(y_2)>0$, otherwise it is undefined. We require that
\begin{equation}\label{eq:security}
	\max_{\alpha,\alpha'}\lVert p_{ZS\vert A=\alpha}-p_{ZS\vert A=\alpha'}\rVert_1
\end{equation}
should be sufficiently small, where $\lVert\cdot\rVert_1$ is the $\ell_1$-norm on the set of probability vectors, or equivalently, the total variation distance on the space of probability measures. (It will depend on the application what ``sufficiently'' means. See Example \ref{ex:iid_source}.) By \eqref{eq:jointdistr} and \eqref{eq:A_Z_distr},
\[
    P_{ZS\vert A}(z,s\vert \alpha)
    =\frac{\abs{\mc A}(p_z^TN_\alpha)(s)}{\abs{\mc S}}.
\]

Let $p_Zp_S$ denote the product of the distributions $p_Z$ and $p_S$, such that $p_Zp_{S}(z,s)=p_Z(z)/\abs{\mc S}$. Without loss of generality, we may assume that $p_Z$ is everywhere positive. In order to bound \eqref{eq:security}, it is sufficient to find, for every $\alpha\in\mc A$, an upper bound for 
\begin{align*}
	\lVert p_{ZS\vert A=\alpha}-p_Zp_{\mc S}\rVert_1
	&=\sum_{z,s}p_Z(z)p_S(s)\left\lvert\frac{p_{SZ\vert A=\alpha}(z,s)}{p_Z(z)p_S(s)}-1\right\rvert\\
	&\leq\left(\sum_{z,s}p_Z(z)p_S(s)\left(\frac{p_{SZ\vert A=\alpha}(z,s)}{p_Z(z)p_S(s)}-1\right)^2\right)^{1/2}\\
	&=\left(\sum_{z,s}\frac{p_{SZ\vert A=\alpha}(z,s)^2}{p_Z(z)p_S(s)}-1\right)^{1/2}\\
	&=\left(\frac{\abs{\mc A}^2}{\abs{\mc S}}\sum_{z,s}\frac{(p_z^TN_\alpha)(s)^2}{p_z^Tj}-1\right)^{1/2}.
\end{align*}
Using \eqref{eq:bil_form}, the term under the square root satisfies
\begin{align*}
	&\frac{\abs{\mc A}^2}{\abs{\mc S}}\sum_z\frac{p_z^TN_\alpha N_\alpha^Tp_z}{p_z^Tj}-1\\
	&\leq\abs{\mc A}\sum_z\frac{(1-\varepsilon)p_z^Tp_z+\varepsilon(p_z^Tj)^2}{p_z^Tj}-1\\
	&=(1-\varepsilon)\abs{\mc A}\sum_z\frac{p_z^Tp_z}{p_z^Tj}+\abs{\mc A}\varepsilon-1.
\end{align*}
By definition, the sum can be written in terms of a conditional R\'enyi 2-entropy,
\[
	\log\sum_z\frac{p_z^Tp_z}{p_z^Tj}=\log\sum_zp_Z(z)2^{-H_2(X\vert Z=z)}=-H_2(X\vert Z).
\]
(Note: There exists a different definition of conditional R\'enyi 2-entropy, see, e.g., \cite{BBCM_gen_PA}.) Hence we obtain the following result.

\begin{thm}\label{thm:PA}
    Let $f:\mc X\times\mc S\to\mc A$ be an $\varepsilon$-ACFU hash function and let $S$ be uniformly distributed on $\mc S$. Choose any $H\geq 0$. Then for any pair $(X,Z)$ of random variables independent of $S$ and satisfying $H_2(X\vert Z)\geq H$, the key $A=f(X,S)$ is uniformly distributed on $\mc A$ and satisfies
	\[
		\max_{\alpha,\alpha'\in\mc A}\lVert P_{ZS\vert A=\alpha}-P_{ZS\vert A=\alpha'}\rVert
		\leq2\left((1-\varepsilon)\abs{\mc A}2^{-H}+\abs{\mc A}\varepsilon-1\right)^{1/2}.
	\]
\end{thm}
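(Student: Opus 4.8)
The plan is to assemble the ingredients already prepared in the paragraphs preceding the statement. First I would dispose of the claim that $A$ is uniform: property (ACFU1) in the matrix form \eqref{eq:const_comp_matrixform} forces the joint law of $(Z,A)$ to factor as in \eqref{eq:A_Z_distr}, so $A$ is uniform on $\mc A$ and independent of $Z$; in particular all the conditional distributions $P_{ZS\vert A=\alpha}$ are well-defined (after the harmless reduction to $p_Z$ everywhere positive), and it remains only to bound \eqref{eq:security}.

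The key structural move is the triangle inequality through the product distribution $p_Zp_S$: for any $\alpha,\alpha'\in\mc A$,
\[
	\lVert P_{ZS\vert A=\alpha}-P_{ZS\vert A=\alpha'}\rVert_1
	\leq\lVert P_{ZS\vert A=\alpha}-p_Zp_S\rVert_1+\lVert p_Zp_S-P_{ZS\vert A=\alpha'}\rVert_1,
\]
so it suffices to bound $\lVert P_{ZS\vert A=\alpha}-p_Zp_S\rVert_1$ by $\bigl((1-\varepsilon)\abs{\mc A}2^{-H}+\abs{\mc A}\varepsilon-1\bigr)^{1/2}$ uniformly in $\alpha$; the factor $2$ in the theorem is exactly the two summands.

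For the single-$\alpha$ estimate I would run the displayed computation that appears just before the statement: pass from the $\ell_1$-norm to an $\ell_2$-type quantity by Cauchy--Schwarz against the weights $p_Z(z)p_S(s)$ (legitimate since these sum to $1$), rewrite the resulting sum of squares via $P_{ZS\vert A}(z,s\vert\alpha)=\abs{\mc A}(p_z^TN_\alpha)(s)/\abs{\mc S}$, and apply the quadratic inequality \eqref{eq:bil_form} with $p=p_z$ before summing over $z$. This yields
\[
	\lVert P_{ZS\vert A=\alpha}-p_Zp_S\rVert_1^2
	\leq(1-\varepsilon)\abs{\mc A}\sum_z\frac{p_z^Tp_z}{p_z^Tj}+\abs{\mc A}\varepsilon-1.
\]
Now recognize $\sum_z p_z^Tp_z/p_z^Tj=\sum_zp_Z(z)2^{-H_2(X\vert Z=z)}=2^{-H_2(X\vert Z)}$, and use $\varepsilon\leq1$ (so $1-\varepsilon\geq0$, which we may assume throughout) together with the hypothesis $H_2(X\vert Z)\geq H$ to replace this by the larger $2^{-H}$. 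This gives the claimed per-$\alpha$ bound, and the right-hand side is automatically nonnegative since it dominates a squared norm, so the square root is meaningful.

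All of the arithmetic here is routine and, in fact, the bulk of it is already carried out in the text; the only steps needing a word of care are that the appeal to \eqref{eq:bil_form} requires the vectors $p_z$ to be nonnegative (they are, being restrictions of a probability vector) and that the substitution of $2^{-H}$ for $2^{-H_2(X\vert Z)}$ is monotone only because $1-\varepsilon\geq0$. I do not expect a genuine obstacle: the real content was packaged into the bilinear-form inequality \eqref{eq:bil_form}, and the theorem is essentially its corollary once the triangle inequality and the entropy identity are in place.
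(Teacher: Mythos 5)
Your proposal is correct and follows essentially the same route as the paper: uniformity of $A$ via \eqref{eq:A_Z_distr}, the triangle inequality through $p_Zp_S$ accounting for the factor $2$, Cauchy--Schwarz against the weights $p_Z(z)p_S(s)$, the bilinear-form bound \eqref{eq:bil_form} applied to each $p_z$, and the identification of the resulting sum with $2^{-H_2(X\vert Z)}\leq 2^{-H}$. Your explicit remarks on the nonnegativity of $p_z$ and on needing $\varepsilon\leq 1$ for the monotone substitution are points the paper leaves implicit, but nothing of substance differs.
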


Generally, in order to achieve good security in the theorem, we need $\varepsilon\approx1/\abs{\mc A}$. For instance, consider the following example.

\begin{ex}\label{ex:iid_source}
	Let a family $\{(X_n,Z_n):n\geq 1\}$ of pairs of random variables be given. Assume that there exists a number $H>0$ such that $H_2(X_n\vert Z_n)\geq nH$ for sufficiently large $n$. This is the case in the typical situation of secret-key generation from i.i.d.\ correlated sources where $n$ indicates the number of observed source realizations. Let $\delta>0$ and choose for every $n$ an $\varepsilon$-ACFU hash function $f_n:\mc X_n\times\mc S_n\to\mc A_n$ such that $X_n$ lives on $\mc X_n$ and with $\varepsilon\leq 1/\abs{\mc A_n}$ and $\log\abs{\mc A_n}\leq n(H-\delta)$. Then the theorem implies that \eqref{eq:security} tends to $0$ exponentially in $n$. In particular, this shows that the best-known key rate in the situation of an i.i.d.\ source is achievable even with our stronger-than-usual security measure, see, e.g., \cite[pp.~151-159]{BlochBarros}.
\end{ex}

 A related application of ACFU hash functions is to \textit{wiretap channels}, see \cite{mosaics}. In this application, an additional requirement is that the blocks $\{x:f(x,s)=\alpha\}$ have constant size. We have seen in Section \ref{sect:seed_lbs} that this poses no real restriction on the ACFU functions.
 
 For application in privacy amplification and the wiretap channel problem, there exist functions which have a smaller seed than ACFU hash functions if $\abs{\mc S}>\abs{\mc X}$, namely $\abs{\mc S}=\abs{\mc X}$, but which also achieve the best known key or channel rates in the standard settings like the i.i.d.\ source setting from the example. They are based on mosaics of near-Ramanujan graphs, i.e., edge decompositions of a complete bipartite graph with equal-sized color classes into subgraphs each of which has a very small second-largest eigenvalue \cite{BRI}. However, so far we do not know of any such mosaics whose corresponding functions are efficiently computable.

\section{Open questions}

After our extension results (Theorems \ref{thm:aff_ext} and \ref{thm:block_set_ext}), we discussed how the original function $g$ and the generated $\hat g$ or $\check g$ relate with respect to equalities in the lower bounds on the seed sizes. What remained open was whether every seed-optimal OCFU hash function can be derived from a seed-optimal OU hash function. Formulated in terms of mosaics and designs, the question is: \textit{Are the members of every mosaic of BIBDs resolvable?} In other words, is the method of Gnilke, Geferath and Pav\v cevi\'c (Corollary \ref{cor:mos_constr_primal}) essentially the only way of constructing a mosaic of BIBDs? By Corollary \ref{cor:OCFU_S_lb}, the members of a mosaics of BIBD$(v,k,\lambda)$ certainly need to satisfy the necessary condition $b\geq v+r-1$ for resolvable designs. 

If this question can be answered in the positive, then this also implies that dually, any $\varepsilon$-ACFU hash function with equality in \eqref{eq:seed_size_lb_variance} is the point extension of an $\varepsilon$-ASU hash function satisfying equality in \eqref{eq:ASU_seed_eq}. Another consequence would be that the sum of a mosaic of BIBDs is doubly-resolvable \cite[Remark I.5.16]{BJL_book}.

More generally, a similar question can be posed about the structure of mosaics which neither in their ``primal'' nor their dual version consist of BIBDs. In terms of ACFU hash functions, this could in particular clarify the relation between seed-optimal ACFU hash functions with equality in \eqref{eq:seed_size_lb_simple} and seed-optimal ASU hash functions with equality in Lemma \ref{lem:ASU_seed_simple}.

\section*{Acknowledgments}

M. Wiese and H. Boche were supported by the German Federal Ministry of Education and Research (BMBF) within the programme ``Souverän. Digital. Vernetzt.'', project 6G-life, grant 16KISK002, and within the project NewCom under grant 16KIS1003K.  H. Boche was additionally supported
in part by BMBF through the project ``Quantum Token Theory and Applications -- Q.TOK'' under grant 16KISQ037K.


\begin{thebibliography}{10}

\bibitem{BT_poly_time}
M.~Bellare and S.~Tessaro.
\newblock Polynomial-time, semantically-secure encryption achieving the secrecy
  capacity.
\newblock arxiv:1201.3160v2, 2012.

\bibitem{BTV_published}
M.~Bellare, S.~Tessaro, and A.~Vardy.
\newblock Semantic security for the wiretap channel.
\newblock In R.~Safavi-Naini and R.~Canetti, editors, {\em CRYPTO 2012}, volume
  7417 of {\em Lecture Notes in Computer Science}, pages 294--311. Springer
  Berlin Heidelberg, 2012.

\bibitem{BBCM_gen_PA}
C.~H. Bennett, G.~Brassard, C.~Cr\'epeau, and U.~M. Maurer.
\newblock Generalized privacy amplification.
\newblock {\em {IEEE} Trans. Inform. Theory}, 41(6):1915--1923, 1995.

\bibitem{BBR_PA}
C.~H. Bennett, G.~Brassard, and J.-M. Robert.
\newblock Privacy amplification by public discussion.
\newblock {\em {SIAM} J. Comput.}, 17(2):210--229, 1988.

\bibitem{BJL_book}
T.~Beth, D.~Jungnickel, and H.~Lenz.
\newblock {\em Design Theory}, volume~69 of {\em Encyclopedia of mathematics
  and its applications}.
\newblock Cambridge University Press, Cambridge, 2nd edition, 1999.

\bibitem{BlochBarros}
M.~Bloch and J.~Barros.
\newblock {\em Physical-Layer Security}.
\newblock Cambridge University Press, Cambridge, 2009.

\bibitem{CW_UH}
J.~Carter and M.~N. Wegman.
\newblock Universal classes of hash functions.
\newblock {\em J. Comput. Syst. Sci.}, 18(2):143--154, 1979.

\bibitem{ChouBloch_separability}
R.~A. Chou and M.~R. Bloch.
\newblock Separation of reliability and secrecy in rate-limited secret-key
  generation.
\newblock {\em {IEEE} Trans. Inform. Theory}, 60(8):4941--4957, 2014.

\bibitem{CKZ_tilings}
A.~{\'{C}}usti{\'{c}}, V.~Kr{\v{c}}adinac, and Y.~Zhou.
\newblock Tiling groups with difference sets.
\newblock {\em Electron. J. Combin.}, 22(2), 2015.

\bibitem{Dembowski}
P.~Dembowski.
\newblock {\em Finite geometries}, volume~44 of {\em Ergebnisse der Mathematik
  und ihrer Grenzgebiete}.
\newblock Springer-Verlag, Berlin, Heidelberg, New York, 1968.

\bibitem{GGP_mosaics}
O.~W. Gnilke, M.~Greferath, and M.~O. Pav{\v{c}}evi{\'{c}}.
\newblock Mosaics of combinatorial designs.
\newblock {\em Des. Codes Cryptogr.}, 86(1):85--95, 2017.

\bibitem{Hay_BB84}
M.~Hayashi.
\newblock Upper bounds of eavesdropper's performances in finite-length code
  with the decoy method.
\newblock {\em Phys. Rev. A}, 76(1):012329, 2007.

\bibitem{Hay_almost_dual_UHF}
M.~Hayashi.
\newblock Security analysis of $\varepsilon$-almost dual
  universal\textsubscript{2} hash functions: Smoothing of min entropy versus
  smoothing of {R{\'{e}}nyi} entropy of order 2.
\newblock {\em {IEEE} Trans. Inform. Theory}, 62(6):3451--3476, 2016.

\bibitem{HayMat}
M.~Hayashi and R.~Matsumoto.
\newblock Secure multiplex coding with dependent and non-uniform multiple
  messages.
\newblock {\em {IEEE} Trans. Inform. Theory}, 62(5):2355--2409, 2016.

\bibitem{Krawczyk_LFSR_UH}
H.~Krawczyk.
\newblock {LFSR}-based hashing and authentication.
\newblock In {\em Advances in Cryptology {\textemdash} {CRYPTO} '94}, pages
  129--139. Springer Berlin Heidelberg, 1994.

\bibitem{MNT_complexity_UH}
Y.~Mansour, N.~Nisan, and P.~Tiwari.
\newblock The computational complexity of universal hashing.
\newblock {\em Theor. Comput. Sci.}, 107(1):121--133, 1993.

\bibitem{MauW_extractors}
U.~Maurer and S.~Wolf.
\newblock Information-theoretic key agreement: From weak to strong secrecy for
  free.
\newblock In B.~Preneel, editor, {\em Advances in Cryptology {\textemdash}
  {EUROCRYPT} 2000, Lecture Notes in Computer Science}, volume 1807, pages
  351--368. Springer Berlin Heidelberg, 2000.

\bibitem{Mikhail_resolvable}
W.~F. Mikhail.
\newblock An inequality for balanced incomplete block designs.
\newblock {\em Ann. Math. Statist.}, 31(2):520--522, 1960.

\bibitem{Renes_PA_duality_channelcoding}
J.~M. Renes.
\newblock On privacy amplification, lossy compression, and their duality to
  channel coding.
\newblock {\em {IEEE} Trans. Inform. Theory}, 64(12):7792--7801, 2018.

\bibitem{Roy_resolvable}
P.~M. Roy.
\newblock A note on the resolvability of balanced incomplete block designs.
\newblock {\em Calcutta Statist. Assoc. Bull.}, 4(3):130--132, 1952.

\bibitem{Sarwate_UnivHashFcts}
D.~V. Sarwate.
\newblock A note on universal classes of hash functions.
\newblock {\em Inform. Process. Lett.}, 10(1):41--45, 1980.

\bibitem{ShrSin_QS_Des}
M.~S. Shrikhande and S.~S. Singhi.
\newblock {\em Quasi-Symmetric Designs}, volume 164 of {\em London Mathematical
  Society Lecture Note Series}.
\newblock Cambridge University Press, Cambridge, 1991.

\bibitem{Stinson_comb_tech_UH}
D.~R. Stinson.
\newblock Combinatorial techniques for universal hashing.
\newblock {\em J. Comput. System Sci.}, 48(2):337--346, 1994.

\bibitem{Stinson_UH_AC}
D.~R. Stinson.
\newblock Universal hashing and authentication codes.
\newblock {\em Des. Codes Cryptogr.}, 4(3):369--380, 1994.

\bibitem{TsuruHay_DualUniv}
T.~Tsurumaru and M.~Hayashi.
\newblock Dual universality of hash functions and its applications to quantum
  cryptography.
\newblock {\em {IEEE} Trans. Inform. Theory}, 59(7):4700--4717, 2013.

\bibitem{Assche_QKD}
G.~van Assche.
\newblock {\em Quantum Cryptography and Secret-Key Distillation}.
\newblock Cambridge University Press, Cambridge, 2006.

\bibitem{vanTrung_ASU}
T.~van Trung.
\newblock A combinatorial characterization of certain universal classes of hash
  functions.
\newblock {\em J. Comb. Des.}, 2(3):161--166, 1994.

\bibitem{WC_UH}
M.~N. Wegman and J.~Carter.
\newblock New hash functions and their use in authentication and set equality.
\newblock {\em J. Comput. Syst. Sci.}, 22(3):265--279, 1981.

\bibitem{BRI}
M.~Wiese and H.~Boche.
\newblock Semantic security via seeded modular coding schemes and {Ramanujan}
  graphs.
\newblock {\em {IEEE} Trans. Inform. Theory}, 67(1):52--80, 2021.

\bibitem{mosaics}
M.~Wiese and H.~Boche.
\newblock Mosaics of combinatorial designs for information-theoretic security.
\newblock {\em Des. Codes Cryptogr.}, 90(3):593--632, 2022.

\bibitem{ISIT23}
M.~Wiese and H.~Boche.
\newblock $\varepsilon$-almost collision-flat universal hash functions
  motivated by information-theoretic security.
\newblock To appear in \textit{Proc. IEEE International Symposium on
  Information Theory (ISIT23)}, 2023.

\end{thebibliography}
\end{document}